\newcommand\ext{{\rm{Ext}}}
\newcommand\supp{{\rm{supp}}}
\newcommand\cds{S}
\newcommand\cG{{\mathcal G}}
\newcommand\cH{{\mathcal H}}
\newcommand\eps{\varepsilon}
\newcommand\N{\mathbb{N}}
\newcommand\Q{\mathbb{Q}}
\newcommand\Z{\mathbb{Z}}
\newcommand\R{\mathbb{R}}
\newcommand\ton{^{(n)}}
\newcommand\tog[1]{^{(#1)}}
\newcommand\wto{{\xrightarrow{w}}}
\newcommand\fd{Fr\'echet differentiable\xspace}
\newcommand\fr{Fr\'echet }
\renewcommand{\le}{\leqslant}
\renewcommand{\ge}{\geqslant}
\title{Random Geometric Graphs in Reflexive Banach Spaces}
\author{J\'ozsef Balogh\footnote{
Department of Mathematics, University of Illinois at Urbana-Champaign, IL, 
USA. Email: \texttt{jobal@illinois.edu}.
Balogh was supported in part by NSF grants DMS-1764123 and RTG DMS-1937241, FRG DMS-2152488, the Arnold O. Beckman Research Award (UIUC Campus Research Board RB 24012).}
\and 
    Mark Walters\footnote{School of Mathematical Sciences, Queen Mary,
    University of London, London E1 4NS, England. e-mail: \tt{m.walters@qmul.ac.uk.}}
  \and Andr\'as Zs\'ak \footnote{Peterhouse, Cambridge CB2 1RD and Department of Pure Mathematics and Mathematical Statistics, Centre for Mathematical Sciences, University of Cambridge, Wilberforce Road, Cambridge CB3 0WB, United Kingdom}}
\date\today
\begin{document}
\maketitle
\newtheorem{theorem}{Theorem}
\newtheorem{lemma}[theorem]{Lemma}
\newtheorem{proposition}[theorem]{Proposition}
\newtheorem{corollary}[theorem]{Corollary}
\newtheorem*{defn}{Definition}
\newtheorem*{theorem*}{Theorem}
\newtheorem*{lemma*}{Lemma}
\newtheorem*{conjecture}{Conjecture}
\newtheorem{question}{Question}
\newtheorem*{quotedresult}{Lemma A}
\theoremstyle{remark}
\newtheorem*{remark}{Remark}

\begin{abstract}
 We investigate a random
geometric graph model introduced by Bonato and Janssen. The vertices are the points of a countable dense
set $S$ in a (necessarily separable) normed vector space $X$, and each pair of 
points are joined independently with some fixed probability $p$ (with $0<p<1$) if
they are less than distance $1$ apart.
 A countable dense set $S$ in a normed space is
 \emph{Rado}, if the resulting graph is almost surely
unique up to isomorphism: that is any two such graphs are, almost
surely, isomorphic.

Not surprisingly, understanding which sets are Rado is closely related to the geometry of the underlying normed space. It turns out that a key question is in which spaces must step-isometries (maps that preserve the integer parts of distances) on dense subsets  necessarily be isometries. We answer this question for a large class of Banach spaces including all strictly convex reflexive spaces. In the process we prove results on the interplay between the norm topology and weak topology that may be of independent interest.

As a consequence of these Banach space results we show that almost all countable dense sets in strictly convex reflexive spaces are strongly non-Rado (that is, any two graphs are almost surely non-isomorphic). However, we show that there do exist Rado sets even in $\ell_2$. Finally we construct a Banach spaces in which all countable dense set are strongly non-Rado.

\end{abstract}

\section{Introduction}
There have been several papers recently discussing a novel random
geometric graph model introduced by Bonato and Janssen
in~\cite{MR2854782}. The vertices are the points of a countable dense
set $S$ in a (necessarily separable) normed vector space $X$, and any
two points are joined independently with some fixed probability $p$ (with $0<p<1$) if
they are less than distance 1 apart. We remark that the resulting
graph is not only infinite, but also, almost surely, every vertex has
infinite degree. We call the resulting graph $G=G(S,p)$ and we write
$\cG(S,p)$ for the probability space of such graphs.

The simplest case is just $X=\R$ and in that case, Bonato and
Janssen~\cite{MR2854782} proved that, for almost all sets $S$ there is
the surprising property that the resulting graph is almost surely
unique up to isomorphism: that is any two such graphs are, almost
surely, isomorphic. (In fact they also showed the resulting graph is independent of
$p$.) We call a countable dense set in any normed space which has this
property \emph{Rado}.
 
They also generalised this result to show that, in any finite
dimensional space with the sup norm, (i.e., $\ell_\infty^n$), almost
all countable dense sets are Rado. In contrast, again
in~\cite{MR2854782}, they showed that in the Euclidean plane no
countable dense set is Rado: indeed, they showed that, for any such
set, any two random graphs are almost surely non-isomorphic. We say
that such a set is \emph{strongly non-Rado}.

The question now becomes to classify spaces that behave like $\ell_\infty^n$ or like the Euclidean plane.
\begin{question}
  In which spaces $X$ are almost all (or all) sets Rado? In which
  spaces $X$ are almost all (or all) sets strongly non-Rado?
\end{question}

We briefly mention the use of the term `almost all' with regard to
countable dense sets -- there are many possible probability measures
(see e.g., the discussion at the end of Section~2 of~\cite{MR3841851})
but in this paper the choice will not be important -- when we say
`almost all' it will apply in all sensible measures.  For example, the
precise result that Bonato and Janssen proved for sets in $\R$ is that
any countable dense set in $\R$ with no two points at integer
distances is Rado, and it is clear that this constraint holds for
almost all countable dense sets in any reasonable measure.

A key idea of Bonato and Janssen's proofs was relating the question to the
following question in normed space theory. As the related question is
of interest itself we discuss this next. First we need a
definition that will be fundamental throughout the paper.
\begin{defn}(Bonato and Janssen~\cite{MR2854782})
  Let $S$ be a dense set in a normed space $X$ and $T:S\to S$ be any
  bijective mapping. $T$ is a \emph{step-isometry} if, for all $x,y\in
  S$, we have $\lfloor\|x-y\|\rfloor =\lfloor\|T(x)-T(y)\|\rfloor$.

  Equivalently $T$ is a step-isometry if, for any $k\in \N$ we
have $\|x-y\|<k$ if and only if $\|T(x)-T(y)\|<k$.
\end{defn}

The following result shows why step-isometries arise.

\begin{lemma}\label{l:isom=step-iso}(Bonato and Janssen~\cite{MR2854782})
  Suppose that $S$ is a countable dense set in a normed space. Then there is a set $\cH$
  consisting of almost all graphs in $\cG=\cG(S,p)$ such that if
  $G_1,G_2\in \cH$ and $T:S\to S$ induces an isomorphism from $G_1$ to $G_2$, then $T$ is a
  step-isometry on $S$.
 \end{lemma}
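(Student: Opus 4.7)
The natural strategy is to produce, for each positive integer $k$, a property $\phi_k(x,y,G)$ that is purely a graph-theoretic invariant of the pair $(x,y)$ in $G$ and is, almost surely for every pair simultaneously, equivalent to $\|x-y\|<k$. The set $\cH$ is then defined as the intersection, over all $k$ and all pairs $(x,y)\in S\times S$, of the countably many almost sure events giving these equivalences. Any graph isomorphism $T$ must preserve each $\phi_k$, so it must satisfy $\|x-y\|<k$ if and only if $\|T(x)-T(y)\|<k$ for every $k\in\N$, which is the step-isometry condition.

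For $k\ge 2$ I would take $\phi_k(x,y,G)$ to be ``there is a path of length $k$ in $G$ from $x$ to $y$''. If such a path exists then the triangle inequality applied to its $k$ edges (each of length strictly less than $1$) deterministically gives $\|x-y\|<k$. For the converse, when $\|x-y\|<k$ there is enough geometric slack (because $k\ge 2$) to combine with the density of $S$ to produce infinitely many pairwise edge-disjoint candidate paths $x=z_0,z_1,\ldots,z_k=y$ with each $\|z_{i-1}-z_i\|<1$; for example, one perturbs a ``straight line'' choice of $z_i$'s by small amounts within $S$, using different perturbations to guarantee edge-disjointness. Each candidate lies in $G$ with probability $p^k>0$, and edge-disjointness makes these events independent, so a Borel--Cantelli argument forces almost sure existence of some candidate path in $G$. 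A countable union over pairs $(x,y)$ and values of $k$ handles all such equivalences at once.

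The case $k=1$ is where the main difficulty lies, since there is only a single candidate ``path'' between $x$ and $y$ (the edge itself), present with probability only $p<1$, and the Borel--Cantelli trick breaks down. A natural first attempt is to invoke density to find nearby pairs $(x',y')\in S\times S$ with $x'\sim y'$ in $G_1$ and apply the edge-preservation of $T$ to obtain $\|T(x')-T(y')\|<1$. However, this only seems to deliver $\|T(x)-T(y)\|\le 1$, with equality a real possibility when $\|x-y\|$ is close to $1$, and bridging the gap to the strict inequality needed for step-isometry requires a more refined graph-theoretic invariant. I would expect the refinement to come from combining edges of $G$ with the longer-path characterizations already established: perhaps iterated common-neighbor conditions, or requiring that for every ``neighborhood'' of $y$ given graph-theoretically (using paths of length two or higher) there exists a $G$-neighbor of $x$ lying in it. This is the delicate heart of the proof and the step I would spend most care on.

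Once the equivalence for $\phi_1$ is in place, setting $\cH$ to be the intersection of the countably many almost sure events gives a full-measure subset of $\cG(S,p)$, and on $\cH$ every graph isomorphism $T\colon G_1\to G_2$ preserves every $\phi_k$ and is therefore a step-isometry of $S$.
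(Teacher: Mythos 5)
Your treatment of the cases $k\ge 2$ is essentially the paper's argument (the paper phrases the invariant as ``graph distance $d_G(x,y)\le k$'' rather than ``there is a path of length $k$'', but the Borel--Cantelli/edge-disjoint-candidates mechanism is the same). The gap is exactly where you suspect: the case $k=1$. What you should know is that this gap cannot be closed by a cleverer graph-theoretic invariant: the paper explicitly remarks that the lemma as stated by Bonato and Janssen ``does not actually hold in as much generality as stated there'' precisely because of this final case. For a pair with $\|x-y\|<1$ the only witness is the single edge $xy$, present with probability $p<1$, and when it is absent the graph carries no almost-sure information distinguishing $\|x-y\|<1$ from $\|x-y\|=1$. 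So no refinement of $\phi_1$ of the kind you are searching for can exist for arbitrary normed spaces.

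The paper resolves this in two complementary ways, neither of which is a graph argument. First, the $k\ge 2$ information already makes $T$ a \emph{step-isometry for large distances}, and Lemma~\ref{l:integer-distances} (a purely geometric argument with collinear auxiliary points in $S$) upgrades this to $\|x-y\|\le k$ iff $\|T(x)-T(y)\|\le k$ for \emph{all} $k\in\N$, including $k=1$; the only remaining ambiguity is the boundary case $\|x-y\|<1$ versus $\|T(x)-T(y)\|=1$. In spaces where $T$ and $T^{-1}$ are automatically continuous (separable reflexive spaces, and more generally those where the open unit balls form a subbase for the norm topology --- see Corollary~\ref{c:step-isom-cts}), Lemma~\ref{l:m-step-isometry-continuous} kills this boundary case by approximating $x$ with points $x\ton\in S$ satisfying $\|x\ton-y\|>k$ and passing to the limit. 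Second, in full generality the issue is simply declared harmless: $T$ \emph{is} a genuine step-isometry for the rescaled norm $\|x\|'=\|x\|/2$, since every integer threshold for $\|\cdot\|'$ corresponds to an even threshold $\ge 2$ for $\|\cdot\|$, and this isometric renorming suffices for every downstream application. Your proposal should either import the continuity input or adopt the renorming observation; as written, the ``delicate heart'' you defer is not a deferrable detail but the point where extra hypotheses (or a reinterpretation of the statement) genuinely enter.
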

 
The key observation in proving this result is that almost all graphs
have the property that, for any two points $x,y\in S$ and $k\in \N$
with $k\ge 2$, the graph distance satisfies $d_G(x,y)\le k$ if and
only if the norm distance satisfies $\|x-y\|<k$. It is immediate that any isomorphism $T$ between two such graphs has the property that 
for any $k\ge 2$ we
have $\|x-y\|<k$ if and only if $\|T(x)-T(y)\|<k$ -- so almost a step-isometry; to complete the proof we would just need to show that $\|x-y\|<1$ if and only if $\|T(x)-T(y)\|<1$. 

In fact, Lemma~\ref{l:isom=step-iso} is stated somewhat differently in~\cite{MR2854782} and this final case does not actually hold in as much generality as stated there. However, it does hold in all separable reflexive spaces (see Lemma~\ref{l:m-step-isometry-continuous} and Corollary~\ref{c:step-isom-cts}).
Regardless, this detail is unimportant -- even though $T$ might not be a step-isometry in $\|\cdot\|$, it \emph{is} a step-isometry in the norm $\|\cdot\|'$ given by $\|x\|'=\|x\|/2$ which is an isometric renorming of $X$.  

A simple explicit consequence is the following lemma.

\begin{lemma}\label{l:countable=not-isomorphic}
  Let $X$ be a normed space and $S$ be a countable dense set with only
  countably many step-isometries on $S$. Then $S$ is strongly
  non-Rado.
\end{lemma}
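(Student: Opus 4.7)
The plan is to combine Lemma~\ref{l:isom=step-iso} with a Borel--Cantelli-style argument. Lemma~\ref{l:isom=step-iso} supplies a full-measure set $\cH\subseteq\cG(S,p)$ such that every isomorphism between members of $\cH$ is induced by some step-isometry $T:S\to S$. By hypothesis there are only countably many such $T$, so if $G_1,G_2$ are independent samples from $\cG(S,p)$ then
\[
\Prb(G_1\cong G_2)\le \Prb(G_1\notin\cH)+\Prb(G_2\notin\cH)+\sum_{T}\Prb(T\text{ induces an isomorphism }G_1\to G_2),
\]
where the sum is over the countable set of step-isometries of $S$. Since the first two terms are $0$, it suffices to show that each summand vanishes.

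Fix a step-isometry $T$. The key observation is that a step-isometry preserves the edge-relation of the model: $\|x-y\|<1$ is equivalent to $\lfloor\|x-y\|\rfloor=0$, which by definition is equivalent to $\lfloor\|T(x)-T(y)\|\rfloor=0$, i.e.~$\|T(x)-T(y)\|<1$. So for any pair $\{x,y\}\subseteq S$ with $\|x-y\|<1$, both $\{x,y\}$ and $\{T(x),T(y)\}$ are candidate edges in their respective graphs, and $T$ being an isomorphism requires the event
\[
A_{x,y}=\bigl\{\{x,y\}\in E(G_1)\ \Longleftrightarrow\ \{T(x),T(y)\}\in E(G_2)\bigr\}
\]
to hold. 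Since the two edge-indicators are independent Bernoulli$(p)$ variables, $\Prb(A_{x,y})=p^2+(1-p)^2=:q<1$.

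Next I would produce a countably infinite sequence of pairwise disjoint pairs $\{x_i,y_i\}\subseteq S$ with $\|x_i-y_i\|<1$, chosen greedily: at each stage only finitely many points of $S$ have been used, and by density of $S$ there are plenty of new pairs at distance less than $1$. Because $T$ is a bijection, the image pairs $\{T(x_i),T(y_i)\}$ are also pairwise disjoint. Therefore the events $A_{x_i,y_i}$ depend on disjoint sets of edge-coin-flips in $G_1$ together with disjoint sets of edge-coin-flips in $G_2$, and are mutually independent (using independence of $G_1$ and $G_2$). Consequently
\[
\Prb\bigl(T\text{ is an isomorphism}\bigr)\le \Prb\Bigl(\bigcap_{i\ge 1}A_{x_i,y_i}\Bigr)=\prod_{i\ge 1}q=0,
\]
completing the argument. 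No step looks like a genuine obstacle; the only point requiring minor care is arranging the disjointness of the paired edges so as to obtain honest independence of the $A_i$'s, and that is handled by the greedy selection above.
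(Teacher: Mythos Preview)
Your argument is correct and is precisely the kind of routine verification the paper has in mind: the paper gives no explicit proof of this lemma, merely calling it ``a simple explicit consequence'' of Lemma~\ref{l:isom=step-iso}. Your union bound over the countably many step-isometries, followed by the Borel--Cantelli step showing that any fixed bijection $T$ is almost surely not an isomorphism, is the natural way to unpack that remark.

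One minor simplification: you do not need the pairs $\{x_i,y_i\}$ to be pairwise \emph{disjoint}; it is enough that they are pairwise \emph{distinct} as unordered pairs. Since $T$ is a bijection of $S$, distinct pairs map to distinct pairs, so the edge-indicators in $G_1$ are automatically independent and the edge-indicators in $G_2$ are automatically independent, which (together with independence of $G_1$ and $G_2$) gives independence of the events $A_{x_i,y_i}$. The greedy disjointness is harmless, just stronger than required.
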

We remark that the converse is not true (for example, the conclusion to the lemma would hold if some infinite subset $S'$ of $S$ has only countably many step-isometries, and any step-isometry of $S$ maps $S'$ to $S'$).

In some normed spaces, such as $\R$ or $\ell_\infty^d \color{black}$,
there are many step-isometries that are not isometries. For example,
given any increasing bijection $g:[0,1]\to[0,1]$, it is easy to check
that $f(x)=\lfloor x\rfloor+g(x-\lfloor x\rfloor)$ is a step-isometry
on $\R$. However, in some other normed spaces such as $d$-dimensional
Euclidean space ($d\ge 2$) it turns out that all step-isometries on
countable dense subsets are actually isometries.  This raises the
following question.
\begin{question}\label{q:step-isom=isom}
  Which spaces $X$ are such that almost all (or all) countable dense subsets $S$
  have the property that any step-isometry on $S$ is actually an isometry?
\end{question}
Any such isometry on $S$ extends to a bijective isometry on the entire space $X$, which is affine by the Mazur-Ulam Theorem (see e.g., \cite{MR1957004}).
It follows that, in the case where a
space $X$ with this property is finite-dimensional, there are only
countably many isometries of $S$.  So by
Lemma~\ref{l:countable=not-isomorphic}, most (or all) countable dense
subsets of $X$ are strongly non-Rado.

Balister, Bollob\'as, Gunderson, Leader and Walters~\cite{MR3841851}
classified the situation in all finite-dimensional normed spaces. They
showed that in all finite-dimensional normed spaces other than
$\ell_\infty^n$ almost all countable dense sets are strongly non-Rado,
and that in many such spaces, including all strictly convex spaces,
\emph{all} countable dense sets are strongly non-Rado. In fact, they did this by proving the corresponding statements for Question~\ref{q:step-isom=isom}; for example showing that any step-isometry on any countable dense set in a strictly convex finite-dimensional space is actually an isometry.
However, they did
not prove anything in the infinite-dimensional setting and, indeed,
asked that as an explicit open question.

More recently, Bonato, Janssen and Quas~\cite{MR3895055,MR4244349}
gave three examples of infinite-dimensional Banach spaces that have
the property that almost all countable dense sets are Rado; the
examples are $c_0$, $c$ and $C([0,1])$ (the space of sequences tending
to zero with the sup norm, the space of sequences tending to a limit
with the sup norm, and the space of continuous functions on $[0,1]$
with the sup norm, respectively).

Finally, in 2021 \u{S}emrl~\cite{MR4222374} proved that in $\ell_p$
for $1<p<\infty$, any step-isometry on any countable dense subset is
actually an isometry. As we shall see this does not show that all
countable dense subsets in $\ell_p$ are strongly non-Rado (as there are countable
dense subsets of $\ell_p$ with uncountably many isometries). However, it does
show that \emph{almost all }sets in $\ell_p$ are strongly non-Rado (e.g., any
countable dense subset with no repeated distance will have no
non-trivial isometries).

The question is still open for all other Banach spaces --
in~\cite{MR3895055} Bonato, Janssen and Quas specifically ask about
the situation in $\ell_p$ and $L_p$; \u{S}emrl's~\cite{MR4222374}
results answer that for most countable dense sets in
$\ell_p$. However, he does not answer it for $L_p$. Additionally he
explicitly asks which spaces have the property that step-isometries on
dense subsets are isometries.

In this paper we answer these questions for a large class of spaces by
proving the following theorem and corollary.
\begin{theorem}\label{t:strict-convex-step-iso-is-iso}
  Let $X$ be a strictly convex reflexive Banach space of dimension at least $2$, $S$ be a
   dense set, and $T$ be a step-isometry on $S$. Then $T$ is an isometry.
\end{theorem}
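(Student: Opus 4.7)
The plan is a scaling/weak-compactness argument that extracts from $T$ a genuine isometry as its large-scale limit, and then uses strict convexity to transfer that isometric property back to $T$ itself.

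First, I would extend $T$ to a global continuous map. By Corollary~\ref{c:step-isom-cts}, applied to both $T$ and $T^{-1}$, both are continuous on $S$, so by density $T$ extends to a continuous bijection $\tilde T : X \to X$ with continuous inverse, satisfying $\bigl|\,\|\tilde Tx - \tilde Ty\| - \|x-y\|\,\bigr| \le 1$ for all $x,y \in X$. After a translation I may assume $\tilde T(0)=0$. Then I would consider the rescaled maps $T_n(x) := \tilde T(nx)/n$. These fix $0$ and satisfy $\bigl|\,\|T_n x - T_n y\| - \|x-y\|\,\bigr| \le 1/n$, so the family $\{T_n\}$ is uniformly asymptotically isometric.

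For each fixed $x\in X$, $\{T_n x\}_n$ is norm-bounded by $\|x\|+1/n$, hence weakly precompact by reflexivity. A diagonal argument on a countable norm-dense subset $D \subset X$ produces a subsequence $n_k \to \infty$ and a map $L : D \to X$ with $T_{n_k}(x) \wto L(x)$ for $x \in D$, which extends by the asymptotic isometry property to a $1$-Lipschitz map $L:X \to X$. Weak lower semicontinuity of the norm gives $\|Lx - Ly\| \le \liminf \|T_{n_k}x - T_{n_k}y\| = \|x-y\|$; running the same construction on the step-isometry $T^{-1}$ and composing yields the reverse inequality. Thus $L$ is a surjective isometry fixing $0$, and by the Mazur--Ulam theorem $L$ is linear.

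The main obstacle is the last step: upgrading ``$L$ is an isometry'' to ``$\tilde T$ is an isometry''. This is where strict convexity must be used nontrivially --- the example of $\R$ or $\ell_\infty^d$ (where step-isometries need not be isometries) shows that reflexivity alone cannot suffice. The plan is to combine $\|T_n x\| \to \|Lx\|$ with $T_n x \wto Lx$ to promote weak to norm convergence, via a Kadec--Klee-type rigidity that strict convexity provides in reflexive spaces (presumably one of the ``interplay between norm and weak topology'' results highlighted in the abstract). Given strong convergence $T_n(x) \to L(x)$, one has $\|\tilde T(nx) - nL(x)\| = o(n)$. To convert this to equality at finite scales, I would exploit uniqueness of midpoints in strictly convex spaces: if $p=(q+r)/2$, then $\tilde T(p)$ is forced by the step-isometry bounds (applied at large scale around $p,q,r$ and rescaled) to be the unique midpoint of $L(q)$ and $L(r)$, namely $L(p)$. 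Restricting $\tilde T = L$ to $S$ shows $T$ is the restriction of the linear isometry $L$, completing the proof.

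The hard part is clearly this final rigidity argument --- the scaling step only produces an \emph{asymptotic} isometry, and strict convexity must do the work of ruling out finite-scale wiggle. I expect most of the technical effort to live here, drawing on the preliminary norm/weak topology lemmas.
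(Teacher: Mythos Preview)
There are two genuine gaps in the plan, and the paper's route is rather different.

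\textbf{Extension.} Continuity of $T$ and $T^{-1}$ on $S$ does not by itself give a continuous extension to $X$: that would require uniform continuity, and the step-isometry condition only gives the coarse bound $\bigl|\|Tx-Ty\|-\|x-y\|\bigr|\le 1$, which says nothing at scales below $1$. The extension you need \emph{does} exist (this is Theorem~\ref{t:step-isos-extend}), but proving it is the main technical work of the paper. Section~\ref{s:extend-reflexive} defines the extension via \emph{weak} limits, and the heart of the argument (Lemma~\ref{l:balls}) shows that if $x\ton\wto x$ and $x\notin B$ for a closed ball $B$, then the $x\ton$ eventually lie in a single large open ball whose closure misses $B$; this uses Asplund's theorem on dense \fr differentiability of the norm, and is what lets one control $T(x\ton)$ using only the step-isometry condition on integer-radius balls. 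The example in Section~\ref{s:not-extend} shows the extension can fail outright in $c_0$, so reflexivity has to be used substantively, not just via continuity.

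\textbf{Kadec--Klee.} You plan to promote $T_n x\wto Lx$, $\|T_n x\|\to\|Lx\|$ to norm convergence via ``a Kadec--Klee-type rigidity that strict convexity provides''. But strict convexity, even together with reflexivity, does \emph{not} imply the Kadec--Klee property; there exist strictly convex reflexive spaces in which it fails. The norm/weak interplay results in the paper are the half-space-by-ball approximations of Section~\ref{s:extend-reflexive}, not convergence-upgrading statements, and they will not rescue this step. Without norm convergence of $T_n x$ your $o(n)$ estimate never gets started, and even granting it, the midpoint sketch as written only pins $\tilde T(p)$ to within a bounded error of $L(p)$, not exactly.

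\textbf{What the paper actually does.} After the extension (Theorem~\ref{t:step-isos-extend}), the paper does not rescale at all. It invokes Lemma~\ref{l:bbglw-s5s6} (from~\cite{MR3841851}): a step-isometry on $X$ fixing $0$ is already an isometry on the lattice $\Lambda$ generated by the extreme points of the unit ball. Strict convexity makes every unit vector extreme, and Lemma~\ref{l:lambda-dense} then gives $\Lambda=X$, so the extension is an isometry on all of $X$, hence on $S$. The Omladi\v{c}--\u{S}emrl theorem appears in the paper only in the separate analysis of step-isometries on $c_0$, not in the proof of Theorem~\ref{t:strict-convex-step-iso-is-iso}.
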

\begin{corollary}\label{c:strict-convex}
  Let $X$ be a strictly convex reflexive Banach space of dimension at
  least $2$, and $S$ be a countable dense set with no repeated distance,
  then $S$ is strongly non-Rado.
\end{corollary}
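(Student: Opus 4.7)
The plan is to derive Corollary~\ref{c:strict-convex} as a short consequence of Theorem~\ref{t:strict-convex-step-iso-is-iso} combined with Lemma~\ref{l:countable=not-isomorphic}. By Lemma~\ref{l:countable=not-isomorphic}, it suffices to show that the collection of step-isometries $T\colon S\to S$ is countable; in fact I will show it is a singleton, consisting only of the identity.

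First, by Theorem~\ref{t:strict-convex-step-iso-is-iso}, every step-isometry $T\colon S\to S$ is an honest isometry, i.e.\ a distance-preserving bijection of $S$. So the task reduces to proving that the no-repeated-distance hypothesis forces any such $T$ to be $\mathrm{id}_S$. For this, fix distinct $x,y\in S$. Then $\|T(x)-T(y)\|=\|x-y\|$, and by assumption no two-element subset of $S$ other than $\{x,y\}$ realises this distance, so $\{T(x),T(y)\}=\{x,y\}$. Consequently each two-element subset of $S$ is either fixed pointwise or swapped by $T$.

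To rule out the swap, note that $S$ is dense in a space of dimension at least $2$ and so is infinite; pick any third point $z\in S\setminus\{x,y\}$. If $T(y)=z$, then applying the same dichotomy to the pair $\{x,y\}$ gives $T(y)\in\{x,y\}$, contradicting $z\notin\{x,y\}$. Similarly swapping $\{x,y\}$ forces $T(x)=y\in\{x,z\}$, another contradiction. Hence each pair among $\{x,y,z\}$ is fixed pointwise, and in particular $T(x)=x$. Since $x$ was arbitrary, $T=\mathrm{id}_S$, so there is a unique step-isometry on $S$ and Lemma~\ref{l:countable=not-isomorphic} completes the argument.

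There is essentially no serious obstacle in this deduction: all of the substantive work is absorbed into Theorem~\ref{t:strict-convex-step-iso-is-iso}, which controls the geometry of step-isometries in strictly convex reflexive spaces. The corollary itself is the elementary rigidity observation that a distance-preserving bijection of a set of at least three points with all pairwise distances distinct must be the identity, composed with an appeal to Lemma~\ref{l:countable=not-isomorphic}.
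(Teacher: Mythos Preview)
Your proposal is correct and follows exactly the paper's approach: invoke Theorem~\ref{t:strict-convex-step-iso-is-iso} to reduce step-isometries to isometries, observe that the only isometry of a set with no repeated distances is the identity, and conclude via Lemma~\ref{l:countable=not-isomorphic}. One minor quibble: the sentence ``If $T(y)=z$, then applying the same dichotomy to the pair $\{x,y\}$\ldots'' is a non sequitur (you already know $T(y)\in\{x,y\}$, so $T(y)=z$ is never on the table), but the subsequent sentence---swapping $\{x,y\}$ forces $T(x)=y\notin\{x,z\}$---is the correct argument and suffices on its own.
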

Note that in the graph model (e.g., Lemmas~\ref{l:isom=step-iso} and~\ref{l:countable=not-isomorphic} and Corollary~\ref{c:strict-convex}) it is necessary that $S$ is countable. However, Theorem~\ref{t:strict-convex-step-iso-is-iso}, which is a purely Banach space result, holds without the separability assumption on $X$. 

Corollary~\ref{c:strict-convex} follows
immediately from Theorem~\ref{t:strict-convex-step-iso-is-iso} and
Lemma~\ref{l:countable=not-isomorphic}, since on any set $S$ with no repeated distance, the identity is the only isometry.  The condition that $S$ has no
repeated distance is true for any reasonable definition of `almost
all' point sets.  Note, we actually prove a somewhat stronger result
that includes the above results as special cases: see
Section~\ref{s:step=isom} for details.

However, in contrast to the situation in finite dimensions, in
infinite dimensions a countably dense set can have many isometries
and, in fact, there can be enough isometries for the set to be Rado.
Indeed, we prove that in many strictly convex reflexive Banach spaces,
including the Hilbert space, there do exist Rado sets. More precisely
we prove:
\begin{theorem}\label{t:exist-rado}
  Let $X$ be any space with a 1-symmetric basis (this includes all the spaces 
  $\ell_p$  with $1< p<\infty$). Then there exists a countable
  dense set that is Rado.
\end{theorem}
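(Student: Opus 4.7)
The plan is to exhibit a specific countable dense set $S \subseteq X$, verify a geometric extension property (EP) for $S$, and then run a Borel--Cantelli plus back-and-forth argument to produce an isomorphism between any two random graphs in $\cG(S,p)$.

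\emph{Construction of $S$.} Fix a normalised 1-symmetric basis $(e_n)$ of $X$, and let $S$ be the $\Q$-linear span of $\{e_n : n \in \N\}$, that is, the set of all finitely supported rational combinations of basis vectors. Then $S$ is countable and dense in $X$, an additive subgroup of $(X,+)$, and invariant under every basis permutation and sign flip, each of which is an isometry of $X$ by 1-symmetry.

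\emph{Extension property.} I would establish: for every finite $F \subseteq S$ and every $v \in X$, there exist infinitely many distinct $w \in S$ with $\lfloor\|w-f\|\rfloor = \lfloor\|v-f\|\rfloor$ for all $f \in F$. The proof is a perturbation. First approximate $v$ by a finitely supported rational $v'' \in S$ such that the integer parts of $\|v'' - f\|$ match those of $\|v - f\|$; when $\|v - f\|$ is itself an integer, $v''$ must be chosen to overshoot using a free coordinate, relying on 1-unconditional monotonicity (if $y$ and $z$ have disjoint supports then $\|y+z\| \geq \|y\|$), which is a consequence of 1-symmetry. Then take $w = v'' + \varepsilon e_j$ for $j$ outside the support of $F \cup \{v''\}$ and small rational $\varepsilon > 0$: monotonicity gives $\|w-f\| \geq \|v''-f\|$, while norm continuity forces $\|w-f\| \to \|v''-f\|$ as $\varepsilon \to 0$, so for small $\varepsilon$ the integer part $\lfloor\|w-f\|\rfloor$ matches the target. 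Varying $j$ over infinitely many free coordinates produces infinitely many distinct $w$.

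\emph{From EP to Rado.} Let $G_1, G_2$ be independent samples from $\cG(S,p)$. Combining EP with independence of random edges and Borel--Cantelli -- union bounded over the countably many finite configurations -- we obtain almost surely, for each $G_i$, the following probabilistic extension property: for every finite $F \subseteq S$, every step-distance profile $\phi$ on $F$ realised by some point of $X$, and every $A \subseteq \{f \in F : \phi(f) = 0\}$, there are infinitely many $w \in S$ realising $\phi$ whose neighbourhood in $F$ is exactly $A$. A standard back-and-forth along an enumeration of $S$ then builds an isomorphism $\psi\colon V(G_1) \to V(G_2)$: at each step the current finite partial map is both a step-isometry and a graph isomorphism on its domain, and the probabilistic extension property (applied alternately to $G_2$ and $G_1$) supplies the next image.

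\emph{Main obstacle.} The heart of the proof is the integer-distance edge case within EP: the perturbation must only increase such distances, which is precisely what 1-unconditional monotonicity delivers, making the unconditional component of the 1-symmetric hypothesis essential. A secondary point is that the back-and-forth requires the step-distance profile at each step to be realisable by some point of $X$; this is automatic if one uses the image of the next vertex, under an extension of the partial step-isometry to an ambient isometry of $X$, as a witness.
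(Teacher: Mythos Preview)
There is a real gap in the back-and-forth. Your extension property (EP) is stated for a finite $F\subseteq S$ and an actual point $v\in X$: it produces $w\in S$ matching the step-distance profile of $v$ relative to $F$. But the back-and-forth step needs something else: given the partial step-isometry $\psi\colon F_1\to F_2$ and the next source vertex $v\in S$, you need $w\in S$ with $\lfloor\|w-\psi(f)\|\rfloor=\lfloor\|v-f\|\rfloor$ for all $f\in F_1$. To feed this into EP (with $F=F_2$) you must first exhibit a witness $v'\in X$ realising the profile $g\mapsto\lfloor\|v-\psi^{-1}(g)\|\rfloor$ relative to $F_2$. You claim such a $v'$ comes from ``an extension of the partial step-isometry to an ambient isometry of $X$'', but a finite step-isometry need not be the restriction of any isometry: in $\ell_2$, the map $0\mapsto 0$, $\tfrac12 e_1\mapsto \tfrac{9}{10}e_1$ is a step-isometry on two points that no isometry of $\ell_2$ restricts to. Once $\psi$ drifts away from being a restricted isometry, nothing you have written guarantees the required profile is realised by any point of $X$, and EP is inapplicable.

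The paper sidesteps this by a different construction and a different invariant. Its set $S=\{s^{(1)},s^{(2)},\dots\}$ is built so that $\supp(s^{(i)})\subseteq[i]$ with $s^{(i)}_i\neq 0$: every vector carries a fresh top coordinate. The back-and-forth then maintains a partial permutation $\sigma\colon I\to J$ of the coordinate index set $\N$, and the partial map on $S$ is the coordinate map $\hat\sigma(e_i)=e_{\sigma(i)}$ restricted to $S_I=\{s\in S:\supp(s)\subseteq I\}$. By $1$-symmetry $\hat\sigma$ is a genuine isometry of $X$, so the realisability issue never arises; the only thing to arrange at each step is that the next vector's image lands back in $S$, and the fresh-top-coordinate design supplies infinitely many valid target coordinates $j$, among which one with the correct edges exists almost surely. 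Your choice $S=\Q\text{-span of }(e_n)$ defeats this mechanism: it contains, for instance, both $e_1$ and $2e_1$, whose images under any coordinate permutation are forced once $\sigma(1)$ is chosen, leaving no freedom to match edges.
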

\noindent%
Note that Theorem~\ref{t:exist-rado} also holds in both $c_0$ where,
as mentioned above, Bonato, Janssen and Quas~\cite{MR3895055} showed that almost all
countable dense sets are Rado, and in $\ell_1$ where the general
situation is still open.

Finally, in contrast we prove:
\begin{theorem}\label{t:exist-no-rado}
  There exists an infinite-dimensional Banach space in which
  \emph{all} countable dense sets are strongly non-Rado.
\end{theorem}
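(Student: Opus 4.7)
The plan is to exhibit an infinite-dimensional, strictly convex, reflexive Banach space $X$ whose group of surjective linear isometries is at most countable. Combined with Theorem~\ref{t:strict-convex-step-iso-is-iso}, this will force every countable dense subset of $X$ to be strongly non-Rado.

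For the deduction, let $S\subseteq X$ be a countable dense set and let $T\colon S\to S$ be a step-isometry. By Theorem~\ref{t:strict-convex-step-iso-is-iso}, $T$ is an isometry, and by the Mazur--Ulam theorem it extends uniquely to an affine isometry $\widetilde T(x)=Ax+b$, where $A$ is a linear isometry of $X$. Fixing any $s_0\in S$ gives $b=\widetilde T(s_0)-As_0$, and since $\widetilde T(s_0)\in S$ there are at most $|S|$ possible values of $b$ for each fixed $A$. With countably many $A$'s available, $S$ has only countably many step-isometries, and Lemma~\ref{l:countable=not-isomorphic} yields that $S$ is strongly non-Rado.

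The main work is to construct such an $X$. I would start with $\ell_2$ and renorm it to destroy all non-trivial linear isometries while retaining strict convexity. A natural recipe: fix $p\in(1,\infty)\setminus\{2\}$, a summable sequence $(w_n)$ of pairwise distinct positive weights, and a functional $f(x)=\sum_n a_n x_n$ whose coefficients $(a_n)$ are pairwise distinct, positive, and $\ell_2$-summable, and define
\[
  \tn x\tn^p \;=\; \|x\|_{\ell_2}^p \;+\; \sum_{n\ge 1} w_n\,|x_n|^p \;+\; |f(x)|^p .
\]
The first summand makes $\tn\cdot\tn$ both strictly convex and equivalent to $\|\cdot\|_{\ell_2}$, so the renormed space is strictly convex and reflexive. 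The second summand is intended to preclude non-trivial permutations of coordinates (its distinct weights make each coordinate direction distinguishable), and the third is intended to rule out non-trivial sign changes (its coefficients being positive and distinct), so that for sufficiently generic data the only surviving linear isometries are $\pm I$. As an alternative, non-constructive route, one may set up a Baire-category argument in the space of equivalent norms on $\ell_2$, combining the density of strictly convex renormings with the density of renormings whose linear isometry group is trivial, to produce such an $X$ abstractly.

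The principal technical obstacle is verifying this rigidity. An arbitrary $\tn\cdot\tn$-isometry does not \emph{a priori} preserve each summand of $\tn\cdot\tn^p$ individually, so one must argue that the three summands collectively encode enough information to pin down $A$. I expect the verification to proceed by differentiating the equation $\tn Ax\tn^p=\tn x\tn^p$ in carefully chosen directions, exploiting that $p\ne 2$ to recover the $\ell_2$-inner-product structure from the less smooth $\ell_p$-type perturbations, or alternatively by an analysis of extreme and exposed points of the unit ball; once the isometry is forced to act diagonally in the standard basis, the distinct-weight and distinct-coefficient conditions immediately force $A=\pm I$.
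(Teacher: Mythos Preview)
Your high-level strategy --- build a separable reflexive space whose only surjective linear isometries are $\pm I$, then feed this into the step-isometry machinery and Lemma~\ref{l:countable=not-isomorphic} --- is exactly the paper's strategy, and your deduction paragraph is essentially identical to the paper's proof of Theorem~\ref{t:exist-no-rado}.

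Where you and the paper diverge is in the construction. You insist on strict convexity so that Theorem~\ref{t:strict-convex-step-iso-is-iso} applies directly, and you propose a weighted-$\ell_p$ perturbation of the $\ell_2$ norm. The paper does \emph{not} aim for strict convexity. Instead it uses a Davis-type renorming of $\ell_2$ (Lemma~\ref{l:exist-X}): the new unit ball is the closed convex hull of an explicit countable set $G$ together with a slice of the $\ell_2$-sphere. This ball is visibly not strictly convex, but it has two virtues. First, its extreme points are exactly the generating set $G$, so the rigidity argument reduces to a completely elementary combinatorial check (the points of $F$ are isolated among the extreme points at decreasing scales, forcing any isometry to fix them one by one). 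Second, the lattice $\Lambda$ generated by the extreme points is all of $X$, so the paper can invoke the more general Lemma~\ref{l:bbglw-s5s6} (together with Theorem~\ref{t:step-isos-extend}) in place of Theorem~\ref{t:strict-convex-step-iso-is-iso}. The payoff is that the isometry classification is a few lines long, with no differentiation or smoothness analysis needed.

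Your route is not wrong in principle --- strictly convex reflexive renormings of $\ell_2$ with trivial isometry group do exist --- but as written it has a real gap: you explicitly leave the rigidity of your norm $\tn\cdot\tn$ as a ``principal technical obstacle'' and only gesture at how to handle it. The difficulty you identify is genuine: an isometry of $\tn\cdot\tn$ need not preserve the three summands separately, and getting from ``isometry'' to ``diagonal in the standard basis'' for a smooth norm like yours requires a nontrivial argument. So your proposal is a reasonable outline but not a proof; the paper's construction trades away strict convexity precisely to make this step trivial.
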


The majority of this paper is focused on proving
Theorem~\ref{t:strict-convex-step-iso-is-iso}.  There are two key steps.
\begin{enumerate}
  \item Prove that every step-isometry on $\cds$ extends (uniquely) to a
    step-isometry on the whole space $X$.
  \item Prove that every step-isometry on the whole space $X$ is actually an isometry.
\end{enumerate}

The first step was proved by Balister, Bollob\'as, Gunderson, Leader
and Walters~\cite{MR3841851} for all finite-dimensional normed
spaces. This was one of the simpler steps in that paper, but the proof
relies on the compactness of the unit ball in finite dimensions. The
unit ball in infinite dimensions is never compact in the norm topology
so we cannot do the same here. In Section~\ref{s:not-extend} we give
an example of a step-isometry on a countable dense set in $c_0$ that
does not extend to a step-isometry on all of $c_0$. This shows that we
need some condition on our space to ensure the existence of such an
extension, and a natural choice is to insist that the space is
reflexive. Indeed, in any reflexive space the unit ball is compact in
the weak topology (which is the same as the weak-* topology in a
reflexive space) and we are able to use this to give the desired
extension. Since the step-isometry condition is very clearly
norm-based, and not weak-topology based, the argument is much more complex
than in finite dimensions.

The second step is not true in general (even in finite
dimensions). Showing exactly when it holds formed the most
substantial part of~\cite{MR3841851}. However, when the norm is strictly convex,
the situation is much simpler than the general case and the arguments
from~\cite{MR3841851} do carry over to infinite dimensions.

\subsection*{Outline of the Paper}
We start by outlining the notation we will use in this paper. In
Section~\ref{s:not-extend} we show that in $c_0$ there exists a
countable dense set $S$ and a step-isometry $T$ on $S$ that does not
extend to a step-isometry on the whole space.

The non-extendable step-isometry can be naturally viewed as a
step-isometry on $c_{00}$, the subspace of $c_0$ consisting of those
vectors with finitely many non-zero terms, and one observation is that
this step-isometry on $c_{00}$ is not continuous. This motivates
Section~\ref{s:continuous}, where we prove that in many Banach spaces
all step-isometries are continuous and indeed we do not know of any
Banach space (i.e., \emph{complete} normed space), where this is not the case.

In Section~\ref{s:extend-reflexive} we show that in reflexive spaces
step-isometries on countable dense subsets can always be extended to
step-isometries on the whole space.  Then in Section~\ref{s:step=isom}
we show that, provided the norm is strictly convex, step-isometries
are in fact isometries and, thus, by combining with the results  in
Section~\ref{s:extend-reflexive}, prove Theorem~\ref{t:strict-convex-step-iso-is-iso}.

Then in Section~\ref{s:ellp-rado-set} we prove
Theorem~\ref{t:exist-rado} showing that there exist Rado sets in
spaces with a 1-symmetric basis.  Finally in Section~\ref{s:no-rado} we prove
Theorem~\ref{t:exist-no-rado}.

We conclude with some open questions.

\section{Notation and Preliminaries}

As many Banach spaces are naturally defined as sequence spaces
(e.g. $\ell_2$), and as we will want to consider dual spaces, we do
not use the same notation as in previous papers.

Our Banach space will generally be called $X$; vectors in $X$ will be
$x,y,z,\ldots$ and where $X$ is a sequence space $x_i$ will be the
$i^{\textrm{th}}$ coordinate. Sequences of vectors in $X$ will be
denoted $x\ton$. Elements of the dual space $X^*$ will be denoted by
$f$.  The countable dense set will be $S$ and the step-isometry from $S$
to $S$ or from $X$ to $X$ will be called $T$.

One useful result is that step-isometries on the whole space
necessarily preserve integer distances. We state the result in slightly more generality replacing step-isometry by a slightly weaker notion that often arises naturally. We say a bijective map $T:S\to S$ is a \emph{step-isometry for large distances}  if there exists $m_0$ such that for all $m\ge m_0$ we have
$\|x-y\|<m$ if and only if $\|T(x)-T(y)\|<m$.

\begin{lemma}\label{l:integer-distances}
  Let $S$ be a dense subset of a normed space $X$ and $T:S\to S$ be a step-isometry for large distances. Then, for all $x,y\in S$ and $k\in \N$ we have
  $\|x-y\|\le k$ if and only if $\|T(x)-T(y)\|\le k$.

  In particular, if $T$ is a step-isometry then it preserves integer
  distances: i.e., $\|x-y\|= k$ if and only if $\|T(x)-T(y)\|= k$.
\end{lemma}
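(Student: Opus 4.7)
The plan is to prove the forward implication $\|x-y\|\le k\Rightarrow\|T(x)-T(y)\|\le k$ by contradiction. The reverse implication will then follow for free, since the hypothesis is symmetric in $T$ and $T^{-1}$: writing the defining biconditional with $u=T(a)$, $v=T(b)$ shows that $T^{-1}:S\to S$ is also a step-isometry for large distances, with the same $m_0$.

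For the forward direction, suppose $\|x-y\|\le k$ yet $r:=\|T(x)-T(y)\|=k+\delta$ with $\delta>0$; we may assume $k\ge 1$, since $k=0$ forces $x=y$ and the conclusion is trivial. The key idea is that the hypothesis gives no direct hold on distances below $m_0$, so we \emph{lift} the problem into the large-distance regime using density of $S$ together with the bijectivity of $T$. Concretely, set $u':=(T(y)-T(x))/r$, pick an integer $M\ge m_0$, and choose $\eps>0$ with $3\eps/2<\delta$. Let $p:=T(x)-(M-\eps)u'\in X$, so that $\|p-T(x)\|=M-\eps$ and $\|p-T(y)\|=M+k+\delta-\eps$. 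By density pick $z'\in S$ with $\|z'-p\|<\eps/2$ and set $z:=T^{-1}(z')\in S$. Routine triangle-inequality estimates give
\[
\|T(z)-T(x)\|<M\quad\text{and}\quad\|T(z)-T(y)\|>M+k,
\]
and since $M,M+k\ge m_0$, the step-isometry hypothesis translates these into $\|z-x\|<M$ and $\|z-y\|\ge M+k$. The triangle inequality in $X$ then forces
\[
\|x-y\|\ge\|z-y\|-\|z-x\|>(M+k)-M=k,
\]
contradicting $\|x-y\|\le k$.

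The ``in particular'' clause is then immediate: for a genuine step-isometry and $\|x-y\|=k$, the main clause gives $\|T(x)-T(y)\|\le k$ while the $<$-preservation with $m=k$ gives $\|T(x)-T(y)\|\ge k$. The one delicate regime is $k<m_0$, where the hypothesis says nothing direct about $\|T(x)-T(y)\|$ beyond the trivial bound $<m_0$; the role of the density-plus-bijectivity construction above is precisely to rescue this case by producing an auxiliary $z\in S$ whose distances to $x$ and $y$ are pushed up into the range where the step-isometry property can be invoked. For $k\ge m_0$ the implication is of course effortless, since $\|x-y\|<k+1/n$ for all $n$ gives $\|T(x)-T(y)\|<k+1/n$ and hence $\|T(x)-T(y)\|\le k$.
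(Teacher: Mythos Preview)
Your main argument is correct and follows essentially the same idea as the paper: push the problem into the large-distance regime by extending along the line through the two points, use density of $S$ to land in $S$, and finish with the triangle inequality. The paper places \emph{two} auxiliary points $u',v'\in S$ on the domain side (near collinear extensions of $x,y$) and proves the contrapositive $\|x-y\|>k\Rightarrow\|T(x)-T(y)\|>k$, whereas you place \emph{one} auxiliary point $z'$ on the image side (along the line through $T(x),T(y)$) and pull back via $T^{-1}$; this is a cosmetic difference, and your one-point version is slightly more economical.

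One remark: your final sentence is wrong. The step-isometry-for-large-distances hypothesis only controls the biconditional $\|a-b\|<m\iff\|T(a)-T(b)\|<m$ at \emph{integer} thresholds $m\ge m_0$, so from $\|x-y\|<k+1/n$ you cannot conclude $\|T(x)-T(y)\|<k+1/n$. Even for $k\ge m_0$ there is no shortcut of this kind (the best you get cheaply is $\|T(x)-T(y)\|<k+1$, which is not enough), so the auxiliary-point construction is genuinely needed for all $k$. This does not affect the validity of your proof, since your main argument already covers every $k$; just delete the aside.
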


We remark that the converse does not hold: the map $T$ defined in Section~\ref{s:not-extend} viewed as a map on $\ell_\infty$ has the property that, for all $x,y\in S$ and $k\in \N$ we have
  $\|x-y\|\le k$ if and only if $\|T(x)-T(y)\|\le k$ but it is not a step-isometry for large distances and does not preserve integer distances. We do not know if the converse always holds on reflexive spaces.

\begin{proof}
   Suppose $x,y\in S$ with $\|x-y\|>k$ for some $k\in \N$. Let $m_0$ be as in the definition of step-isometry for large distances. Pick $u,v\in
  X$ collinear with $x,y$ such that $\|u-x\|< m_0$, $\|v-y\|< m_0$ and
  $\|u-v\| > k+2m_0$. Providing we pick $u',v'\in S$ sufficiently
  close to $u$ and $v$ then we will have $\|u'-x\|< m_0$, $\|v'-y\|<
  m_0$ and $\|u'-v'\| > k+2m_0$.

  By the hypothesis on $T$, we have $\|T(u')-T(x)\|< m_0$,
  $\|T(v')-T(y)\|< m_0$ and $\| T(u')- T(v')\|\ge k+2m_0$. Thus, by
  triangle inequality, $\|T(x)-T(y)\|> k$. The converse holds
  by applying the same argument to $T^{-1}$. 
\end{proof}

In fact, in many cases, any step-isometry for large distances is automatically a step-isometry.  Indeed, we
do not know of any case where it is not.
Whilst we cannot prove that the two notions coincide in general, they are the same whenever $S$ is the whole normed space. It is also the case that if $T$ is a step-isometry for large distances such that both $T$ and $T^{-1}$ are continuous then $T$ is a step-isometry. As we shall see in the next section
in many spaces all step-isometries for large distances are continuous.

\begin{lemma}\label{l:m-step-isometry-vector-space}
  Let $X$ be a normed space and $T:X\to X$ be a step-isometry for large distances.  Then $T$ is a
  step-isometry.
\end{lemma}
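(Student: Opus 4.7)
The plan is to reduce the conclusion to showing that $T$ preserves integer distances. Indeed, Lemma~\ref{l:integer-distances} already gives, for every $k\in\N$, the ``$\le k$'' equivalence $\|u-v\|\le k \iff \|T(u)-T(v)\|\le k$; combining this with preservation of integer distances immediately yields the strict ``$<k$'' equivalence required of a step-isometry.

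To establish integer-distance preservation I would exploit that $T$ is defined on the whole of $X$ by blowing up the segment joining the two points. Given $\|x-y\|=k\in\N$ with $k\ge 1$, set $y_i=x+i(y-x)$ for $i=0,\dots,N$, choosing $N$ large enough that $Nk\ge m_0$, where $m_0$ is the constant from the large-distances hypothesis. Then $\|y_0-y_N\|=Nk$, and applying the hypothesis at $m=Nk$ and at $m=Nk+1$ pins $\|T(y_0)-T(y_N)\|$ into $[Nk,Nk+1)$; simultaneously, the ``$\le$'' half of Lemma~\ref{l:integer-distances} forces $\|T(y_0)-T(y_N)\|\le Nk$, so the value is exactly $Nk$. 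Since each consecutive step satisfies $\|T(y_i)-T(y_{i+1})\|\le k$ (again by Lemma~\ref{l:integer-distances}), and the triangle inequality gives $\sum_i\|T(y_i)-T(y_{i+1})\|\ge\|T(y_0)-T(y_N)\|=Nk$, equality must hold throughout and each of the $N$ consecutive steps must saturate the bound $k$. In particular $\|T(x)-T(y)\|=k$. The reverse implication --- that $\|T(x)-T(y)\|=k$ forces $\|x-y\|=k$ --- is immediate by applying the same argument to $T^{-1}$, which itself is a step-isometry for large distances with the same $m_0$.

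I do not foresee a substantive obstacle. The decisive point --- and the only place where the large-distance hypothesis is really used --- is the observation that the blown-up endpoint distance can be pinned to exactly the integer $Nk$, simultaneously from below by the strict part of the large-distance hypothesis and from above by Lemma~\ref{l:integer-distances}. This leaves the triangle inequality with no slack, so the rigidity at the macroscopic scale propagates down through the $N$ equal sub-segments to the original scale, giving the sought integer-distance preservation.
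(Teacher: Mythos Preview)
Your proof is correct. The paper's argument uses the same ingredients---Lemma~\ref{l:integer-distances}, collinear auxiliary points, the large-distance hypothesis, and the triangle inequality---but organises them more directly: rather than first establishing integer-distance preservation via an iterated subdivision, it simply proves $\|x-y\|\ge k\Rightarrow\|T(x)-T(y)\|\ge k$ in one stroke by choosing collinear $u,v$ with $\|u-x\|=\|v-y\|=m_0$ and $\|u-v\|\ge k+2m_0$, applying Lemma~\ref{l:integer-distances} to the two short segments and the large-distance hypothesis to the long one, and then invoking the triangle inequality once. Your route has the pleasant side effect of explicitly exhibiting integer-distance preservation for step-isometries-for-large-distances on the whole space (which the paper only asserts for genuine step-isometries), but the paper's version is a touch quicker since it avoids the saturation-of-the-triangle-inequality step over $N$ sub-segments.
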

\begin{proof}
  Suppose $x,y\in X$ and $\|x-y\|\ge k$ for some $k\in \N$. Let $m_0$ be as in the definition of step-isometry for large distances.  Pick
  $u,v\in X$ collinear with $x,y$ such that $\|u-x\|=m_0$,
  $\|v-y\|=m_0$ and $\|u-v\|\ge k+2m_0$. Then, by
  Lemma~\ref{l:integer-distances}, we have $\|T(u)- T(x)\|\le m_0$ and
  $\|T(v)-T(y)\|\le m_0$. Also, by the hypothesis on $T$, we have
  $\|T(u)-T(v)\|\ge k+2m_0$. Thus, by triangle inequality,
  $\|T(x)-T(y)\|\ge k$ as required. Again the converse holds by
  applying the argument to $T^{-1}$.
\end{proof}
\begin{lemma}\label{l:m-step-isometry-continuous}
  Let $S$ be a dense subset of a normed space $X$ and $T:S\to S$ be a step-isometry for large distances. Further suppose that $T$ and $T^{-1}$ are both
  continuous. Then $T$ is a step-isometry.
\end{lemma}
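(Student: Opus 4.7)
My plan is to bootstrap from Lemma~\ref{l:integer-distances}, which already grants the equivalence $\|x-y\|\le k\Leftrightarrow\|T(x)-T(y)\|\le k$ for every $k\in\N$. Since being a step-isometry means the strict equivalence $\|x-y\|<k\Leftrightarrow\|T(x)-T(y)\|<k$ for every $k\in\N$, the only remaining obstruction is a boundary collapse: a pair $x,y\in S$ with $\|x-y\|<k$ but $\|T(x)-T(y)\|=k$ (or its symmetric counterpart). The continuity hypotheses on $T$ and $T^{-1}$ should be exactly the tool that forbids this collapse, and everything else will be delivered by Lemma~\ref{l:integer-distances}.

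Concretely, fix $x,y\in S$ and $k\in\N$ with $\|x-y\|<k$, and suppose towards a contradiction that $\|T(x)-T(y)\|=k$; in particular $T(x)\ne T(y)$. Set $w=T(x)-T(y)$ and note that, for any $t>0$, the point $T(x)+tw$ lies at distance $(1+t)k>k$ from $T(y)$. By density of $S$, for any prescribed $\eta>0$ I can choose $t>0$ small enough and then $u\in S$ close enough to $T(x)+tw$ to arrange simultaneously $\|u-T(x)\|<\eta$ and $\|u-T(y)\|>k$ (a routine triangle-inequality calculation). Applying Lemma~\ref{l:integer-distances} in the form $\|T^{-1}(u)-y\|\le k\Leftrightarrow\|u-T(y)\|\le k$ then yields $\|T^{-1}(u)-y\|>k$.

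Now continuity of $T^{-1}$ at $T(x)$ provides the contradiction: by shrinking $\eta$, we can force $T^{-1}(u)$ to be arbitrarily close to $T^{-1}(T(x))=x$, so $\|T^{-1}(u)-y\|\to\|x-y\|<k$, which is incompatible with $\|T^{-1}(u)-y\|>k$. The reverse implication, $\|T(x)-T(y)\|<k\Rightarrow\|x-y\|<k$, is obtained by the identical argument with the roles of $T$ and $T^{-1}$ interchanged, using continuity of $T$ in place of $T^{-1}$ (note that $T^{-1}$ is itself a step-isometry for large distances with the same $m_0$, so Lemma~\ref{l:integer-distances} applies symmetrically). I do not expect any serious obstacle; the only point requiring mild care is verifying that density of $S$ really lets us pick $u$ both arbitrarily close to $T(x)$ and strictly outside the closed ball of radius $k$ around $T(y)$, which is immediate once one perturbs along the outward direction $w$ before invoking density.
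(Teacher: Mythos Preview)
Your proposal is correct and follows essentially the same approach as the paper: reduce via Lemma~\ref{l:integer-distances} to ruling out the boundary case, then perturb one point outward along the line through the two images, approximate by a point of $S$, and invoke continuity to get a contradiction. The only cosmetic difference is that the paper argues the equivalent statement $\|x-y\|=k\Rightarrow\|T(x)-T(y)\|=k$ by perturbing on the domain side (a sequence $x\ton\to x$ with $\|x\ton-y\|>k$) and using continuity of $T$, whereas you perturb on the image side and use continuity of $T^{-1}$; the two arguments are mirror images of one another.
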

\begin{proof}
  By Lemma~\ref{l:integer-distances} we know that for all $x,y\in S$
  and $k\in \N$ we have $\|x-y\|\le k$ if and only if
  $\|T(x)-T(y)\|\le k$. We need to show that the same holds if we
  replace both inequalities by strict inequalities. It suffices to
  show that $\|x-y\|=k$ if and only if $\|T(x)-T(y)\|= k$

  Suppose $\|x-y\|=k$. Let $x\ton$ be a sequence in $S$ converging to
  $x$ with the property that $\|x\ton-y\|>k$ for all $n$. By
  Lemma~\ref{l:integer-distances} we see that
  $\|T(x\ton)-T(y)\|>k$. Since $T$ is continuous, $T(x\ton)\to
  T(x)$, and so we have $\|T(x)-T(y)\|\ge k$. Since
  Lemma~\ref{l:integer-distances} shows that $\|T(x)-T(y)\|\le k$, we have
  $\|T(x)-T(y)\|=k$ as required. Again the converse holds by applying
  the same argument to $T^{-1}$.
\end{proof}

\section{Step-isometries may not extend.}\label{s:not-extend}
We start by showing that, unlike in the finite-dimensional setting,
there exist step-isometries on countable dense sets that cannot be
extended to step-isometries on the whole space.
\begin{theorem}\label{t:step-isom-not-extend}
  There exists a countable dense set $S$ in $c_0$, and a step-isometry
  $T$ mapping $S$ to itself that does not extend to a step-isometry on
  the whole of $c_0$.
\end{theorem}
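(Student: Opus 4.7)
The goal is to construct a countable dense set $S\subset c_0$ and a step-isometry $T:S\to S$ admitting no extension to a step-isometry on the whole space. A natural choice is $S=c_{00}(\Q)$, the countable dense set of rational, finitely-supported sequences. The high-level approach is to define $T$ as a countable family of pairwise-disjoint swaps on top of the identity: choose pairs $(x\tok,y\tok)\in S\times S$ with all $2k$ vectors distinct, and set $T(x\tok)=y\tok$, $T(y\tok)=x\tok$, $T(w)=w$ otherwise.

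The crux is choosing the swap data. The step-isometry condition amounts to: for each $k$ and every $w\in S\setminus\{x\tok,y\tok\}$ we must have $\lfloor\|x\tok-w\|\rfloor=\lfloor\|y\tok-w\|\rfloor$, and the pairwise compatibilities for pairs of swapped pairs. We can arrange this by taking $y\tok=x\tok+u\tok$, where $u\tok$ is supported on fresh coordinates lying beyond the supports of a prescribed finite list of $w$'s as well as beyond the previous pairs, and of sufficiently small norm that it does not push the distance to any such $w$ across an integer boundary. Enumerating $S=\{w_0,w_1,\dots\}$ and constructing the swap data inductively, we ensure the floor-distance equalities for any pair that involves at most finitely many of the $w_j$ and of the earlier swap vectors; the remaining cases are handled by keeping all swap vectors of sufficiently small norm so they remain floor-distance equivalent to every $w$ through the sup-norm coordinate decoupling.

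For non-extendability we choose the pairs so that $x\tok\to x$ in norm for some $x\in c_0$, while the ``swap tails'' $u\tok$ are placed so that the sequence $y\tok=T(x\tok)$ escapes pathologically: any candidate $z=\tilde T(x)\in c_0$ would, via the extension's step-isometry constraint $\lfloor\|z-T(w)\|\rfloor=\lfloor\|x-w\|\rfloor$ for all $w\in S$, be forced to have coordinates bounded away from zero at infinitely many positions, contradicting $z\in c_0$. This is where the non-reflexivity of $c_0$ is used decisively: candidate ``limit points'' with non-vanishing coordinates at infinity are excluded from $c_0$, so the weak-compactness argument that underlies the extension result in Section~\ref{s:extend-reflexive} has no counterpart here.

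\textbf{Main obstacle.} The main difficulty is the simultaneous requirement of (i) a bona fide step-isometry on all of $S$ (preserving every pairwise floor-distance) and (ii) the pathological escape of $T(x\tok)$ that rules out any extension. The construction must be engineered by careful coordinate bookkeeping: each swap is ``hidden'' in coordinates disjoint from the supports of the relevant $w$'s, using that the sup-norm of $c_0$ localizes perturbations, while globally the $y\tok$ are arranged to force conflicting coordinate-wise demands on any candidate $\tilde T(x)$.
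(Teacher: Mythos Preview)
Your approach has a genuine gap: a map built from the identity together with nontrivial transpositions can never be a step-isometry on $S=c_{00}(\Q)$.

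Suppose $T$ swaps $x\tok$ with $y\tok=x\tok+u\tok$, $u\tok\neq 0$, and fixes all $w\in S$ outside the swap set. Let $n$ be a coordinate with $|u\tok_n|=\|u\tok\|=:\alpha>0$ (in your setup $n$ is a ``fresh'' coordinate, so $x\tok_n=0$). For any rational $\eps\in(0,\alpha)$ set
\[
w=-(1-\eps)\,\mathrm{sgn}(u\tok_n)\,e_n\in S.
\]
Then on coordinate $n$ we get $|x\tok_n-w_n|=1-\eps<1$ while $|y\tok_n-w_n|=\alpha+(1-\eps)\ge 1$. If $\|x\tok\|<1$ (which you need, since $x\tok\to x$ in $c_0$), then $\|x\tok-w\|<1$ but $\|y\tok-w\|\ge 1$, so $\lfloor\|x\tok-w\|\rfloor\neq\lfloor\|y\tok-w\|\rfloor$. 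There are infinitely many such $w$ (one for each rational $\eps\in(0,\alpha)$), so you cannot absorb them all into the swap set; and each new swap you introduce to cover one of them creates its own infinite family of bad $w$'s. More generally, for \emph{any} pair $x\neq y$ in $S$, setting $v=x-y$ and $w=x+tv/\|v\|$ gives $\|x-w\|=t$ and $\|y-w\|=t+\|v\|$, so floors differ for a dense set of rational $t$. Hence no inductive bookkeeping can repair the construction: a transposition-based $T$ with any nontrivial swap fails the step-isometry condition on a dense set of fixed points.

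The paper's construction avoids this by working \emph{coordinate-wise}: it chooses one-dimensional step-isometries $h_n\colon\R\to\R$ and sets $T((x_n))=(h_n(x_n))$. Because the sup norm decouples, $\lfloor\|T(x)-T(y)\|_\infty\rfloor=\max_n\lfloor|h_n(x_n)-h_n(y_n)|\rfloor=\max_n\lfloor|x_n-y_n|\rfloor=\lfloor\|x-y\|_\infty\rfloor$ automatically, so the step-isometry condition holds for \emph{all} pairs in $S$ without any case analysis. Non-extendability is then forced by letting the $h_n$ distort more and more wildly near $0$, so that the natural image of a fixed $x\in c_0\setminus c_{00}$ is pushed out of $c_0$. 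If you want to rescue your argument you will need a mechanism, like this coordinate-wise one, that guarantees the floor condition globally rather than pair-by-pair.
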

\begin{proof}
  We define $S$ to be the set of all rational sequences in $c_{00}$
  (the space of eventually zero sequences). Obviously this is
  countable and dense in $c_0$.

We define $T:S\to S$ coordinate-wise. Define $g_n:[0,1]\to [0,1]$ to be
the piecewise linear function with
\[
  g_n(0)=0,\qquad g_n\left(\tfrac{1}{n+1}\right)=1-\tfrac{1}{n+1}, \qquad
  g_n(1)=1.
\]
Define $h_n:\R\to \R$ by
$h_n(x)=\lfloor x \rfloor+g_n(x-\lfloor x \rfloor)$, and note that
$h_n$ is a step-isometry and that it is a bijection $\Q$ to $\Q$.  Finally
define $T$ by
\[
T\left((x_n)_{n=1}^\infty\right)=\left(h_n(x_n)\right)_{n=1}^\infty.
\]
It is clear that $T$ is a bijection of $S$ to itself and, since $T$ is
a step-isometry in each coordinate, it is a step-isometry on $S$
(which follows immediately since $S$ is a subset of $c_{00}$).

We remark that $T$ is \emph{not} continuous. Indeed, the sequence
$\tfrac{1}{n+1}e_n$ (where $e_n$ are the standard basis vectors) tends
to zero, but
$\|T(\tfrac{1}{n+1}e_n)\|=\|(1-\tfrac{1}{n+1}) e_n\|=1-\tfrac{1}{n+1}\ge\frac12$
for all $n$.

Suppose, for a contradiction, that $T$ does extend to a step-isometry
$\bar T$ on the whole of $c_0$. Let $x\in c_0$ be the point
$x=(\tfrac{1}{n+1})_{n=1}^\infty$. We consider the possible values for
$\bar T(x)$. It is immediate that for any $\eps$ with $0<\eps<1$,
$y_\eps=(1+\tfrac{1}{n+1}-\eps)e_n$ is less than distance 1 from $x$
as is $z_\eps=(-1+\tfrac{1}{n+1}+\eps)e_n$. Hence, since $\bar T$ is a
step-isometry, $\bar T(x)$ has to be in
$B^\circ(\bar T(y_\eps),1)\cap B^\circ(\bar T(z_\eps),1)$, and hence
\[\bar T(x)\in \bigcap_{\eps>0}B^\circ(\bar T(y_\eps),1)\cap B^\circ(\bar T(z_\eps),1).\]
However, if $\eps\in\Q$ then $y_\eps$ and $z_\eps$ are both in $S$ so
$\bar T(y_\eps)=T(y_\eps)$ and $\bar T(z_\eps)=T(z_\eps)$. In particular,
\[\bar T(x)\in
\bigcap_{\substack{\eps>0\\\eps\in\Q}}B^\circ(T(y_\eps),1)\cap
  B^\circ(T(z_\eps),1)\tag{*}.\] Provided $\eps<\tfrac{1}{n+1}$ we have
$T(y_\eps)=\left(2-\tfrac{1}{n+1}-n\eps\right)e_n$ and
$T(z_\eps)=\left(-\tfrac{1}{n+1}+\eps/n\right)e_n$.  It is easy to see that
this implies that any point in the intersection $(*)$ must have its $n^\textrm{th}$
coordinate equal to $1-\tfrac{1}{n+1}$. Thus $\bar T (x)$ must have
$n^\textrm{th}$ coordinate $1-\tfrac{1}{n+1}$.

This is true for all $n$, so $\bar T(x)=(1-\tfrac{1}{n+1})_{n=1}^\infty$, but this is
not in $c_0$ which is the desired contradiction.
\end{proof}

We conclude this section with two observations about the proof
above. First we note that the same definition of $T$ naturally gives a
step-isometry on the whole of $c_{00}$. In particular, there are
step-isometries on the normed space $c_{00}$ that are not continuous.

Secondly, the map $T$ does extend very naturally to a map $\bar
T:\ell_\infty\to \ell_\infty$: just use the coordinate-wise
definition. Indeed, that is what we showed had to happen for the
specific point $x=(\tfrac{1}{n+1})_{n=1}^\infty$. However, this map is
not a step-isometry -- indeed, for $x=(\tfrac{1}{n+1})_{n=1}^\infty$
as in the proof above, we have $\|x\|=1/2$ but $\|\bar T(x)\|=1$
which, since $T(0)=0$, breaks the step-isometry
condition. Alternatively, as we shall see in the next section, all
step-isometries on $\ell_\infty$ are continuous so this
(non-continuous) map cannot be a step-isometry.

\section{Step-isometries on many spaces are continuous}\label{s:continuous}
In the previous section we have seen an example of a step-isometry on
a normed space ($c_{00}$ with the sup norm) that is not continuous. In this section we
show that in many Banach spaces \emph{all} step-isometries are
continuous and, indeed, we do not know of any \emph{Banach} space where this
is not the case. We remark that this property is not preserved by isomorphism, only by isometry; later in this section we will give an example of a normed space isomorphic to $c_{00}$ with the sup norm on which all step-isometries are continuous.

By the definition of a step-isometry we know that the pre-image of any
open ball of integer radius is an open ball of (the same) integer
radius. In many Banach spaces, the open unit balls form a subbase for
the (norm) topology and hence, for these spaces, all step-isometries
are continuous. Since the set of unit balls is closed under
translation, in order to show that the unit balls are a subbase we
just need to show that, for any $\eps$ there is an intersection of
balls with diameter less than $\eps$. This is not the case in all
Banach spaces: indeed, it is easy to see that in $c_0$ any (non-empty)
finite intersection of unit open balls has diameter $2$. However, if the closed unit ball has a
`corner' then the open unit balls \emph{do} form a subbase.

There are many definitions of `corner' from `extreme point' to
`strongly exposed point' (see Bourgin~\cite{MR0704815}). We will
use one of the weaker definitions, namely: `strongly extreme point'. A point $x$ in the unit ball is a
\emph{strongly extreme point} if for all $\eps>0$ there exists
$\delta>0$ such that for any $y$ with $\|x+y\|<1+\delta$ and
$\|x-y\|<1+\delta$ we have $\|y\|<\eps$.

\begin{lemma}
  Let $X$ be a Banach space for which the unit ball B has a strongly
  extreme point $x$. Then the open unit balls form a subbase for the
  norm topology on $X$.
\end{lemma}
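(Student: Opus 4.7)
The plan is to reduce the claim to a local statement at the origin, then use the strong extreme point $x$ (via a scaling trick) to exhibit small basic neighborhoods as intersections of just two open unit balls.

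Since translates of the unit ball are again unit balls, and the norm topology is translation invariant, it suffices to show that for every $\eps>0$ there exist finitely many open unit balls whose intersection contains $0$ and is contained in $B^\circ(0,\eps)$. If this holds at $0$ then translating yields the analogous statement at every $z_0\in X$, and this is exactly what it means for the open unit balls to form a subbase for the norm topology.

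Given $\eps>0$, I would first apply the strongly extreme point hypothesis to produce $\delta>0$ such that any $y$ with $\|x+y\|<1+\delta$ and $\|x-y\|<1+\delta$ satisfies $\|y\|<\eps$. The key trick is then to rescale $x$: set $x'=x/(1+\delta)$. Since $\|x\|\le 1$, we have $\|x'\|\le 1/(1+\delta)<1$, so $0\in B^\circ(x',1)\cap B^\circ(-x',1)$. If $y$ lies in this intersection, multiplying by $1+\delta$ gives $\|(1+\delta)y-x\|<1+\delta$ and $\|(1+\delta)y+x\|<1+\delta$, so the strong extremity condition (applied to the vector $(1+\delta)y$) yields $\|(1+\delta)y\|<\eps$, hence $\|y\|<\eps/(1+\delta)<\eps$. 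Thus $B^\circ(x',1)\cap B^\circ(-x',1)\subseteq B^\circ(0,\eps)$, as required.

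The main conceptual obstacle is the mismatch between the $1+\delta$ slack built into the definition of a strongly extreme point and the requirement that our basic open sets be radius-exactly-$1$ balls; once one notices that dividing the centre by $1+\delta$ converts this slack into the radius $1$, the argument is immediate, and the whole proof reduces to a two-ball intersection rather than anything more elaborate.
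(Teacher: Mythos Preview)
Your proof is correct and essentially identical to the paper's: both use the same scaling trick of dividing by $1+\delta$ to convert the slack in the strongly extreme point definition into radius-$1$ balls, and both obtain the small neighborhood as an intersection of exactly two unit balls. The only cosmetic difference is that the paper centers its two balls at $0$ and $\tfrac{2}{1+\delta}x$ (so the intersection is around $\tfrac{x}{1+\delta}$), whereas you translate first and center them symmetrically at $\pm\tfrac{x}{1+\delta}$ so the intersection is around $0$.
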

\begin{proof}
  As mentioned above we just need to find finite intersections of open
  balls of arbitrarily small diameter. Let $x$ be a strongly extreme
  point of the unit ball. Then given $\eps>0$, let $\delta$ be as in
  the definition of strongly extreme point for $x$. Consider the two
  balls $B^\circ(0,1+\delta)$ and $B^\circ(2x,1+\delta)$. Obviously
  $x$ is in the intersection. Suppose $x+y$ is in the
  intersection. Then $\|x+y\|< 1+\delta$ and, since $2x-(x+y)=x-y$, we
  also have $\|x-y\|<1+\delta$. Therefore, by the definition of a
  strongly extreme point, $\|y\|<\eps$. In particular the intersection
  is a subset of $B(x,\eps)$.

  Hence, scaling everything by $1/(1+\delta)$ we see that the
  intersection of the two open unit balls $B^\circ(0,1)$ and
  $B^\circ(\frac{2}{1+\delta}x,1)$ has diameter at most $\eps$ and
  this completes the proof.
\end{proof}

Now that we have this lemma we just need to show that the unit balls
in `many' spaces have strongly extreme points. The unit ball in $c_0$
has no extreme points so it definitely has no strongly extreme point
and, indeed, we have seen that the unit balls do not form a subbase in this
case.

However, it is known that the unit ball in any space with the
Radon-Nikodym property does have a strongly extreme point (and more, see Phelps~\cite{MR0352941}).  All separable dual spaces 
have the Radon-Nikodym property, and it follows that, in any separable
dual space and, in particular, in any separable reflexive space, all
step-isometries are norm continuous.

We remark that the
question of whether every step-isometry is continuous makes sense in
any Banach space, separable or not, even though our original
motivation only applies in separable spaces. Indeed, the class of spaces with the Radon-Nikodym property is rather
large and includes many non-separable spaces, including \emph{all} reflexive spaces and, more generally, the dual of any Asplund space. 

Whilst the Radon-Nikodym property is a sufficient condition for the
unit balls to be a subbase, it is not a necessary condition. (In fact, Hu~\cite{MR1152279}) showed that a space $X$ has the Radon-Nikodym property if and only if the unit ball in any equivalent norm has a strongly extreme point.)  Indeed,
$\ell_\infty$ does not have the Radon-Nikodym property, but it is easy
to see that its unit ball does have a strongly extreme point (e.g.,
the constant 1 sequence), so all step-isometries on it are continuous.

Similarly, on any space of continuous functions of a compact set with the
sup norm (i.e., a space $C(K)$) the identically 1 function is a strongly extreme point,
and again any step-isometry on such a space is necessarily
continuous. This includes the space $c$. 

In exactly the same way we see that all step-isometries on the space of eventually constant sequences with the sup norm are continuous. This space is isomorphic to $c_{00}$ with the sup norm, so this shows that the property of all step-isometries being continuous is not preserved by isomorphism.

Finally the space $L_1(0,1)$ does not have \emph{any} extreme points,
so definitely no strongly extreme point. The following lemma shows
that the unit balls do form a subbase so, once again, we see that all
step-isometries are continuous on this space. However, in contrast to
spaces with a strongly extreme point, we need to consider the
intersection of more than two balls.

\begin{lemma}
  In $L_1(0,1)$ the open unit balls do form a subbase. However, for
  any two intersecting unit balls the diameter of the intersection is
  2.
\end{lemma}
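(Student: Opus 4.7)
My plan has two parts corresponding to the two claims.

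For the diameter claim, let $m = (f+g)/2$ and $D = (g-f)/2$, so $\|D\|_1 < 1$ and the midpoint $m$ lies in both balls. Candidate antipodal points are $h_\pm = m \pm \psi$; they both lie in the intersection iff $\|D\pm\psi\|_1 < 1$, and $\|h_+-h_-\|_1 = 2\|\psi\|_1$. To push $\|\psi\|_1$ close to $1$, I would exploit the pointwise identity $|D+\psi|+|D-\psi|=2\max(|D|,|\psi|)$ and choose $|\psi|=\rho$ with $\rho\ge|D|$ and $\int\rho$ close to $1$; the natural choice is $\rho = |D| + (1-\|D\|_1-\epsilon)$. Writing $\psi=s\rho$ with $s:(0,1)\to\{\pm1\}$ measurable, the hypothesis $\rho\ge|D|$ gives $|D\pm\psi|=\rho\pm sD$ pointwise, so $\|D\pm\psi\|_1 = \int\rho \pm \int sD$; both values equal $\int\rho = 1-\epsilon$ provided $\int sD = 0$. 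Such an $s$ is produced by the intermediate value theorem applied to the continuous function $t\mapsto\int_0^t D$: choose $t_0$ with $\int_0^{t_0}D = \tfrac12\int_0^1 D$ and let $s$ be $+1$ on $[0,t_0]$ and $-1$ on $(t_0,1]$. Then $\|h_+-h_-\|_1 = 2(1-\epsilon)$ can be made arbitrarily close to $2$.

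For the subbase property, by translation invariance it suffices to find arbitrarily small finite intersections of unit balls containing the origin. I would partition $(0,1)$ into $N$ equal intervals $A_1,\dots,A_N$ and take the $N$ unit balls centered at $f_k := (N-1)\mathbf{1}_{A_k}$; each has $\|f_k\|_1 = (N-1)/N < 1$, so the origin lies in all of them. For $h$ in the intersection, summing the $N$ constraints and using that for $t \in A_j$ the sum $\sum_k|h(t)-f_k(t)|$ equals $(N-1)|h(t)|+|h(t)-(N-1)|$, one obtains
\[
(N-1)\|h\|_1 + \int_0^1|h(t)-(N-1)|\,dt < N.
\]
The reverse triangle inequality bounds the second term below by $(N-1)-\|h\|_1$, giving $\|h\|_1 < 1/(N-2)$. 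Hence every element of the intersection has small $L_1$-norm and the intersection has diameter at most $2/(N-2)$, arbitrarily small for large $N$.

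The main subtlety I expect is in part (b): ensuring both $\|D\pm\psi\|_1$ are individually less than $1$, not merely that their sum is less than $2$. The sign condition $\int sD=0$, engineered via the intermediate value theorem, is exactly what balances the two norms. In part (a), the key step is the combinatorial identity converting $N$ ball constraints into a single integral inequality that forces $\|h\|_1 \to 0$ as $N\to\infty$; this is how $L_1(0,1)$ leverages its infinite-dimensional structure despite the absence of any strongly extreme point.
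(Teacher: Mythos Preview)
Your proof is correct. Both parts take genuinely different routes from the paper's argument.

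For the diameter claim, the paper translates one centre to the origin, writes the other centre as $f$ with $\|f\|_1=2\lambda<2$, chooses $\alpha$ so that $\int_0^\alpha|f|=\int_\alpha^1|f|=\lambda$, and exhibits the two points $g_1=\tfrac{\mu}{\lambda}f\mathbf{1}_{[0,\alpha]}$ and $g_2=\tfrac{\mu}{\lambda}f\mathbf{1}_{[\alpha,1]}$ for $\lambda<\mu<1$; a direct computation shows both lie in the intersection with $\|g_1-g_2\|_1=2\mu$. Your midpoint-plus-perturbation construction $h_\pm=m\pm s\rho$, with the sign function $s$ balanced via the intermediate value theorem so that $\int sD=0$, achieves the same end by a more structural route: rather than scaling $f$ on two mass-halves, you build $\psi$ to dominate $|D|$ pointwise and then symmetrise the two norms $\|D\pm\psi\|_1$. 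The paper's construction is more explicit; yours explains more transparently \emph{why} the obstruction (getting both $\|D\pm\psi\|_1<1$ simultaneously) can always be removed.

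For the subbase claim, the paper uses exactly four balls for every target diameter: with $\chi(\alpha,\beta)$ denoting the step function equal to $\alpha$ on $[0,\tfrac12]$ and $\beta$ on $(\tfrac12,1]$, the balls centred at $\chi(\pm(2-\delta),0)$ and $\chi(0,\pm(2-\delta))$ intersect in a set of diameter less than $\delta$. Your approach instead uses $N$ balls centred at tall indicators $(N-1)\mathbf{1}_{A_k}$ over an $N$-fold partition, and the summation identity plus the reverse triangle inequality force $\|h\|_1<1/(N-2)$. The paper's argument has the advantage of showing that four balls already suffice (which, combined with the second part of the lemma, pins down the minimal number needed); your averaging argument is slicker and generalises more readily, at the cost of using arbitrarily many balls.
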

\begin{proof}
  We start by proving the second part. By translating we may assume
  that one of the unit balls is centred at the origin and the other at
  some function $f$ in $L_1(0,1)$ with $\|f\|=2\lambda<2$. The result
  is trivial if $f=0$ so assume $f\not=0$. Pick $\alpha$ such that
  $\int_0^\alpha|f|=\int_\alpha^1|f|=\lambda$. Fix $\mu$ with $\lambda<\mu<1$.  Let
  $g_1=\frac{\mu}{\lambda} f \textbf{1}_{[0,\alpha]}$ and
  $g_2=\frac{\mu}{\lambda} f \textbf{1}_{[\alpha,1]}$. By the choice
  of $\lambda,\mu$ and $\alpha$ we see that $\|g_1\|=\|g_2\|=\mu$ so
  $g_1,g_2\in B(0,1)$.  Also, $\|g_1-f\|=(\mu-\lambda)+\lambda=\mu$ and
  similarly $\|g_2-f\|=\mu$ which shows that $g_1,g_2\in
  B(f,1)$. Finally we see that $\|g_1-g_2\|=2\mu$ which proves the
  result.

  For the first part, we show that, for any $\eps>0$ we can find 4
  unit balls whose common intersection contains 0, but is contained in
  $B(0,\eps)$. Let $\chi(\alpha,\beta)$ denote the function which is
  identically $\alpha$ on $[0,1/2]$ and identically $\beta$ on
  $(1/2,1]$.

  Fix $0<\delta<1$ and consider the four balls $B^\circ(\chi(2-\delta,0),1),
  B^\circ(\chi(-2+\delta,0),1),$ $ B^\circ (\chi(0, 2 -\delta), 1),
  B^\circ(\chi(0,-2+\delta),1)$. Obviously 0 is in the
  intersection. Suppose that $f$ is in the intersection. Since
  $\|\chi(2-\delta,0)-\chi(-2+\delta,0)\|=2-\delta$ we see that
  $f\textbf{1}_{[0,1/2]}$ must be at least $1-\delta/2$ from one of
  these functions. Since $f$ is within $1$ of that function, we see
  that $\|f\textbf{1}_{[1/2,1]}\|<\delta/2$. A similar argument applied
  to $\chi(0,2-\delta),\chi(0,-2+\delta)$ shows that
  $\|f\textbf{1}_{[0,1/2]}\|<\delta/2$ and hence that $\|f\|<\delta$
  and the result follows.
\end{proof}

As discussed above, one particular space not covered by this argument
is $c_0$ -- we will conclude this section by showing that even in that
case all step-isometries are continuous. However, before that we will
show that in spaces where the open unit balls form a sub-base, not
only are step-isometries on the whole space continuous, but
step-isometries on dense subsets are continuous. The example in the
previous section shows that this is not true in $c_0$.

\begin{lemma}
Let $X$ be a normed space where the open unit balls form a subbase for
the topology, let $S$ be a dense set in $X$ and let $T\colon
S\to S$ be a step-isometry. Then $T$ is continuous.
\end{lemma}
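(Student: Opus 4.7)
The plan is to reduce the continuity of $T$ at any $x\in S$ to a purely topological statement about the subbase, and then verify that statement by combining the subbase hypothesis with the density of $S$.

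First, I would observe the key consequence of the step-isometry property: for every $s_0\in S$, the equivalence $\|y-s_0\|<1\Leftrightarrow\|T(y)-T(s_0)\|<1$ gives
\[
  T^{-1}\bigl(B^\circ(T(s_0),1)\cap S\bigr)=B^\circ(s_0,1)\cap S,
\]
which is open in $S$. Thus finite intersections of preimages of $S$-centred open unit balls are open in $S$, and $T$ will be continuous at $x$ provided every open neighbourhood of $T(x)$ contains a finite intersection of open unit balls whose centres lie in $T(S)=S$ (noting that $T$ is a bijection on $S$ so $T(S)=S$, which is dense in $X$).

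The continuity problem therefore reduces to the following topological claim: \emph{if $\{B^\circ(z,1):z\in X\}$ is a subbase for $X$ and $S\subseteq X$ is dense, then $\{B^\circ(s,1):s\in S\}$ is also a subbase for $X$.} To prove this claim, given $x\in X$ and an open neighbourhood $U$ of $x$, I would first apply the subbase hypothesis to find $z_1,\ldots,z_k\in X$ with $x\in\bigcap B^\circ(z_i,1)\subseteq U$; since each $\|x-z_i\|<1$ strictly, there is a positive margin $\eta_0:=\min_i(1-\|x-z_i\|)$. I would then apply the subbase hypothesis a second time, this time at each $z_i$, with a sufficiently small target, to exhibit each $B^\circ(z_i,1)$ as containing a finite intersection of open unit balls whose centres can in turn be approximated by points of $S$. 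The combined finite intersection of the resulting $S$-centred balls will contain $x$ (thanks to the margin $\eta_0$) and sit inside $U$ (by the nested subbase structure).

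The main obstacle is ensuring that the perturbed intersection really does stay inside $U$: replacing centres $z_i$ by nearby points of $S$ moves each ball by the approximation error in Hausdorff distance, and a finite intersection of unit balls is in general sensitive to such perturbation, as shown by the $c_0$ example of Section~\ref{s:not-extend}. The way to manage this is precisely the double use of the subbase: the first application produces an intersection with a definite margin inside $U$, and the second application, at each centre $z_i$, allows one to refine the approximation sufficiently that the error introduced by passing from $X$-centres to $S$-centres is absorbed by that margin. Once the topological claim is established, step-isometry immediately transports a finite intersection $\bigcap B^\circ(T(t_i),1)\subseteq U$ (with $t_i\in S$) to the open neighbourhood $\bigcap B^\circ(t_i,1)\cap S$ of $x$ in $S$, and this neighbourhood is mapped by $T$ into $U\cap S$, completing the proof of continuity.
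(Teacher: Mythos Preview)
Your reduction to the topological claim ``$S$-centred open unit balls also form a subbase'' is correct and is essentially what the paper proves as well. The gap is in your proof of that claim. After the second application of the subbase you have centres $w_j\in X$ with $x\in\bigcap_j B^\circ(w_j,1)\subseteq B(x,\eta_0/2)\subseteq\bigcap_i B^\circ(z_i,1)\subseteq U$, and you then want to replace each $w_j$ by some nearby $s_j\in S$. You can certainly keep $x\in\bigcap_j B^\circ(s_j,1)$ by choosing $\|s_j-w_j\|$ small. But there is no reason why $\bigcap_j B^\circ(s_j,1)\subseteq U$: you only know $\bigcap_j B^\circ(s_j,1)\subseteq\bigcap_j B^\circ(w_j,1+\delta)$, and the subbase hypothesis gives you no control whatsoever on how $\bigcap_j B^\circ(w_j,1+\delta)$ compares to $\bigcap_j B^\circ(w_j,1)$. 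The ``nested subbase structure'' you invoke does not provide this, and iterating the construction only reproduces the same difficulty at the next level. Your margin $\eta_0$ lives on the \emph{inside} (it says $x$ is well inside each ball), whereas the perturbation error lives on the \emph{outside} (it enlarges the intersection), and these do not cancel.

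The paper sidesteps this entirely with a one-line trick: since unit balls form a subbase, so do balls of any fixed radius $r>1$ (scaling is a homeomorphism). Now given $T(x)\in B^\circ(x^{(i)},r)$, the open set $B^\circ(T(x),1)\cap B^\circ(x^{(i)},r-1)$ is nonempty, and any $z^{(i)}\in S$ in it satisfies $T(x)\in B^\circ(z^{(i)},1)\subseteq B^\circ(x^{(i)},r)$. So the $S$-centred unit ball sits \emph{inside} the original radius-$r$ ball by construction, and no perturbation estimate is needed. This is the missing idea in your argument.
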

\noindent%
We remark that $S$ need not be countable, and indeed can be taken to
be the whole space.
\begin{proof}
  Fix $r>1$. First, observe that the set of all balls of radius $r$
  form a subbase. 
  Also we note that, since $r>1$, then for any $x,y\in
  X$ with $y\in B(x,r)$ there exists $z\in S$ such that $B(z,1)\subset
  B(x,r)$ and $y\in B(z,1)$. Indeed, the set $B(y,1)\cap B(x,r-1)$ is
  open and non-empty, and any point $z$ in this set satisfies these
  conditions.

  Now suppose $x\in S$ and $\eps>0$ are given. The ball $B(T(x),\eps)$
  is open so, since the balls of radius $r$ form a subbase, there exist
  $k\in \N$ and points $x\tog{1}$, $x\tog{2},\ldots,  x\tog{k}\in X$ such
  that \[\bigcap_{i=1}^kB(x\tog{i},r)\subset B(T(x),\eps)\qquad\text{and}\qquad
  T(x)\in \bigcap_{i=1}^kB(x\tog{i},r).\] By our observation we can
  find $z\tog{i}\in S$, for $1\le i\le k$, such that
  $B(z\tog{i},1)\subset B(x\tog{i},r)$ and $T(x)\in B(z\tog{i},1)$ which gives
\[\bigcap_{i=1}^kB(z\tog{i},1)\subset B(T(x),\eps)\qquad\text{and}\qquad
  T(x)\in \bigcap_{i=1}^kB(z\tog{i},1).\]

Then $\bigcap_{i=1}^kB(T^{-1}(z\tog{i}),1)$ is an open set containing
$x$ whose image is a subset of $B(T(x),\eps)$.
\end{proof}

The following corollary summarises what we have proved in this 
section.
\begin{corollary}\label{c:step-isom-cts}
    Suppose that $S$ is a dense set in a normed space $X$ and 
    let $T:S\to S$ be a step-isometry. Then $T$ is necessarily continuous if any of the following is true:
    \begin{enumerate}
    \item $X$ is reflexive.
    \item $X$ is the dual of an Asplund space. This includes all separable dual spaces. 
    \item $X=C(K)$ for some compact $K$.
    \item $X=C_b(K)$ for some space $K$ (bounded continuous functions on $K$).
    \item $X=L_1(0,1)$.
    \end{enumerate}    
\end{corollary}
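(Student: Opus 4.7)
The plan is to reduce all five cases to a single unifying criterion, namely that the open unit balls form a subbase for the norm topology on $X$. Once this is established, the final lemma of the section applies directly to give that every step-isometry $T\colon S\to S$ is continuous. Thus the task is, for each of the five listed spaces, to verify the subbase property, invoking either the ``strongly extreme point'' lemma already proved or the explicit construction for $L_1(0,1)$.

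For cases (1) and (2), I would route the argument through the Radon-Nikodym property (RNP). It is classical that every reflexive Banach space has RNP and that the dual of any Asplund space (and in particular every separable dual space) has RNP; I would simply cite these. By Phelps' result cited in the preceding discussion, the unit ball of any Banach space with RNP contains a strongly extreme point. The lemma showing that a strongly extreme point forces the open unit balls to be a subbase then applies, and the final continuity lemma finishes both cases at once.

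For (3) and (4), I would argue directly that the constant function $\mathbf{1}$ is a strongly extreme point of the unit ball of $C(K)$ or $C_b(K)$. Given $\eps>0$, set $\delta=\eps$; then if $\|\mathbf{1}+f\|_\infty<1+\delta$ and $\|\mathbf{1}-f\|_\infty<1+\delta$, evaluation at any point $t$ gives $|f(t)|<\delta=\eps$, so $\|f\|_\infty\le\eps$. (A tiny adjustment in the constants handles the strict inequality in the definition.) Then the same chain of lemmas as in (1)--(2) applies. Case (5) is already done: the dedicated lemma in this section proves that the open unit balls of $L_1(0,1)$ form a subbase even though the unit ball has no extreme points, so the final continuity lemma applies once more.

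There is no genuine obstacle here; all the substantive work has already been done in the preceding lemmas. The proof is really a bookkeeping exercise: one checks the subbase hypothesis case by case and then invokes the continuity lemma. The only minor subtlety worth flagging in writing is that cases (1) and (2) require citing (rather than proving) that reflexivity and ``dual of Asplund'' each imply RNP, together with Phelps' theorem that RNP guarantees a strongly extreme point; everything else is immediate from material already established in the section.
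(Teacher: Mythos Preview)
Your proposal is correct and follows essentially the same route as the paper: the corollary is explicitly presented there as a summary of the preceding discussion, which reduces every case to the subbase criterion via either a strongly extreme point (RNP for (1)--(2), the constant function $\mathbf{1}$ for (3)--(4)) or the dedicated $L_1(0,1)$ lemma, and then invokes the final continuity lemma. The only cosmetic difference is that the paper states these observations informally in the running text rather than as a standalone proof.
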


\subsection*{Step isometries on $c_0$ are continuous}
In this section we give a complete description of step-isometries on
$c_0$ and, in particular, show they are all continuous. Apart from the
intrinsic interest, this means that we do not have any candidate
Banach spaces where step-isometries are not continuous. It also gives
a second proof of the fact that the (discontinuous) step-isometry on
$c_{00}$ constructed in Section~\ref{s:not-extend}  does not extend to a
step-isometry on $c_0$. More importantly though, it shows that the
continuity of step-isometries on a particular Banach space is not, by
itself, sufficient to show that the step-isometries on a dense subset
extend to step-isometries on the whole space -- thus, the complexity of
the argument in the next section, which shows that they do extend in
reflexive spaces, may be necessary.

We use two results from the theory of Banach spaces. The first is a
theorem of Omladi\v{c} and \u{S}emrl~\cite{MR1359952} (or
Gevirtz~\cite{MR0718987} building on Gruber~\cite{MR0511409} with a weaker constant) which implies that any
step-isometry is close to an isometry. Their precise result is:
\begin{theorem}[Omladi\v{c} and \u{S}emrl~\cite{MR1359952}]\label{t:omaldic-semrl}
  Let $X$ be a Banach space and $T:X\to X$ be a surjective map fixing zero
  with the property that, for some $\alpha$,
  $\big|\|T(x)-T(y)\|-\|x-y\|\big|\le \alpha$ for all $x,y\in X$. Then
  there exists a linear isometry $U$ of $X$ such that
  $\|U(x)-T(x)\|\le 2\alpha$.
\end{theorem}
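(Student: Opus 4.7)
The plan is to construct the isometry $U$ as a limit of rescaled copies of $T$. Specifically, I would define
\[ U(x) = \lim_{n \to \infty} \frac{T(2^n x)}{2^n}, \]
and prove that the limit exists, that $U$ is a linear isometry, and that $\|U(x) - T(x)\| \le 2\alpha$ for every $x$.

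The crux is an \emph{approximate midpoint property}: for $x, y \in X$ with true midpoint $m = (x+y)/2$, the image $T(m)$ must lie close to the vector-space midpoint $(T(x) + T(y))/2$. The elementary half is immediate from the hypothesis, since $\|T(x) - T(m)\|$ and $\|T(y) - T(m)\|$ are each at most $\|x-y\|/2 + \alpha$, while $\|T(x) - T(y)\| \ge \|x-y\| - \alpha$, so $T(m)$ is an $O(\alpha)$-approximate midpoint of $T(x), T(y)$, in the sense of being almost equidistant from them at almost half their distance. In a general Banach space, however, approximate midpoints need not be close to the genuine midpoint, so a priori the set of such approximate midpoints could have large diameter.

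This is where the surjectivity hypothesis enters crucially. Any approximate midpoint $w$ of $T(x), T(y)$ admits some preimage under $T$, which is itself (by applying the $\alpha$-isometry condition in reverse) an approximate midpoint of $x, y$ in $X$. Pushing this information back through $T$ forces all approximate midpoints of $T(x), T(y)$ to cluster in a set whose diameter is controlled by $\alpha$. Iterating the midpoint estimate along the dyadic sequence $\bigl(T(2^n x)/2^n\bigr)_n$ then shows it is Cauchy, so $U(x)$ is well defined; additivity of $U$ on dyadic rationals combined with the isometry property yields continuity, which extends $U$ to an $\R$-linear isometry of $X$.

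The main obstacle, and what distinguishes this theorem from its predecessors, is achieving the sharp constant $2\alpha$. The classical arguments of Hyers--Ulam, Gruber, and Gevirtz yield larger constants because they bound the diameter of approximate-midpoint sets more coarsely. The sharp value requires a delicate analysis exploiting the reflection symmetry $z \mapsto 2m - z$ around a midpoint, together with careful bookkeeping of how errors accumulate across the dyadic iteration and in the final comparison $\|U(x) - T(x)\| \le 2\alpha$. This sharp-constant analysis is the technical heart of the proof and is where I would expect the bulk of the work to lie.
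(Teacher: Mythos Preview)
The paper does not prove this theorem; it is quoted from Omladi\v{c} and \u{S}emrl~\cite{MR1359952} and used as a black box (only Corollary~\ref{c:omaldic-semrl}, an immediate consequence, is derived from it). So there is no proof in the paper to compare your proposal against.

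That said, your outline is the correct one and matches the strategy of the original Omladi\v{c}--\u{S}emrl argument: define $U$ as the dyadic limit $U(x)=\lim_n 2^{-n}T(2^n x)$, use an approximate-midpoint estimate (with surjectivity to bound the diameter of the approximate-midpoint set) to show the sequence is Cauchy and that $U$ is additive and norm-preserving, and then extract the bound $\|U(x)-T(x)\|\le 2\alpha$. You are also right that the sharp constant $2\alpha$ is the delicate point; the earlier Gruber--Gevirtz arguments give $5\alpha$ and $3\alpha$ respectively, and getting down to $2\alpha$ is precisely what distinguishes the cited paper. If you intend to write out a full proof, the place to be careful is the quantitative control on the approximate-midpoint set: one needs not just that its diameter is $O(\alpha)$ but a tight enough bound that the dyadic telescoping does not lose a factor.
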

We remark that the constant 2 in the conclusion is tight.
\begin{corollary}\label{c:omaldic-semrl}
Let $T$ be a step-isometry on a Banach space $X$ fixing zero. Then
there exists a linear isometry $U$ on $X$ such that
$\|T(x)-U(x)\|\le 2$ for all $x$.
\end{corollary}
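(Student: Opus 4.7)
The plan is to apply Theorem~\ref{t:omaldic-semrl} directly with $\alpha=1$, so the whole task is to verify its hypotheses for a step-isometry $T$ on $X$.

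First, I would observe that $T$ is surjective: by the definition in the paper, a step-isometry is a bijective map, and here that bijection is from $X$ to itself, so surjectivity onto $X$ is immediate. Moreover, $T$ fixes zero by assumption.

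The next step is to check the near-isometry condition. For any $x,y\in X$, the step-isometry property gives $\lfloor\|x-y\|\rfloor=\lfloor\|T(x)-T(y)\|\rfloor$. If this common integer part is $n$, then both $\|x-y\|$ and $\|T(x)-T(y)\|$ lie in the half-open interval $[n,n+1)$, so
\[
\bigl|\,\|T(x)-T(y)\|-\|x-y\|\,\bigr|<1.
\]
In particular the inequality in the hypothesis of Theorem~\ref{t:omaldic-semrl} holds with $\alpha=1$.

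Applying that theorem then produces a linear isometry $U$ of $X$ with $\|U(x)-T(x)\|\le 2\alpha=2$ for every $x\in X$, which is exactly the conclusion of the corollary. There is no real obstacle here: the content of the corollary is entirely the quantitative stability theorem quoted as Theorem~\ref{t:omaldic-semrl}, and the only thing to do is translate the combinatorial ``preserves integer parts'' condition into the uniform estimate $|\,\|T(x)-T(y)\|-\|x-y\|\,|\le 1$.
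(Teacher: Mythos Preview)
Your argument is correct and matches the paper's proof exactly: both simply note that $\lfloor\|x-y\|\rfloor=\lfloor\|T(x)-T(y)\|\rfloor$ forces $\bigl|\|T(x)-T(y)\|-\|x-y\|\bigr|\le 1$ and then apply Theorem~\ref{t:omaldic-semrl} with $\alpha=1$.
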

\begin{proof}
  Since for any step-isometry we have $\lfloor\|x-y\|\rfloor
  =\lfloor\|T(x)-T(y)\|\rfloor$ we see that
  $\big|\|T(x)-T(y)\|-\|x-y\|\big|\le 1$, and Theorem~\ref{t:omaldic-semrl}
  can be applied with $\alpha=1$.
\end{proof}
The second result is a classical result describing the isometries of
$c_0$ (see e.g., Theorem~2.f.14. of~\cite{MR0500056}).
\begin{theorem}\label{t:c0-isom}
  Let $T$ be an isometry on $c_0$ fixing zero. Then there exists a
  permutation $\sigma$ of $\N$ and signs $\eps_i\in \{-1,+1\}$ such
  that $T\left((x_i)_{i=1}^\infty\right)=(\eps_ix_{\sigma(i)})_{i=1}^\infty$.
\end{theorem}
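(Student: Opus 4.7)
The plan is to apply the Banach--Stone approach via the dual space. Since the theorem classifies the (necessarily surjective) isometries of $c_0$, the Mazur--Ulam theorem combined with $T(0)=0$ gives that $T$ is a linear isometric isomorphism. Its adjoint $T^{*}:\ell_1\to\ell_1$, under the canonical identification $c_0^{*}=\ell_1$, is then also a surjective linear isometry, and the problem reduces to understanding $T^{*}$.

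The geometric crux is to identify the extreme points of the closed unit ball of $\ell_1$ as exactly the set $\{\pm e_n:n\in\N\}$. Extremality of $\pm e_n$ is immediate from the strict convexity of $\|\cdot\|_1$ along coordinates of like sign. For the converse, suppose $f\in\ell_1$ has $\|f\|_1=1$ with nonzero entries $f_i$ and $f_j$ at two distinct indices. Let $\delta>0$ be smaller than $\min(|f_i|,|f_j|)$ and define $g$ by $g_i=\delta\cdot\mathrm{sign}(f_i)$, $g_j=-\delta\cdot\mathrm{sign}(f_j)$, and $g_k=0$ otherwise. Then $f\pm g$ both have $\ell_1$-norm exactly $1$ (the $\delta$ contributions cancel), and $f=\tfrac12((f+g)+(f-g))$ is a proper convex combination, contradicting extremality.

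Since any surjective linear isometry of a Banach space carries extreme points of its unit ball to extreme points, $T^{*}$ induces a bijection of $\{\pm e_n:n\in\N\}$. Hence for each $n$ there exist $\eta_n\in\{-1,+1\}$ and $\sigma(n)\in\N$ with $T^{*}(e_n)=\eta_n e_{\sigma(n)}$, and the bijectivity of $T^{*}$ forces $\sigma$ to be a permutation of $\N$ (and $\eta_{\sigma^{-1}(n)}$ to record the corresponding sign). Evaluating $T(x)$ coordinate by coordinate through the duality pairing yields
\[
T(x)_n \;=\; \langle T(x),e_n\rangle \;=\; \langle x,T^{*}(e_n)\rangle \;=\; \eta_n\, x_{\sigma(n)},
\]
which is exactly the claimed formula with $\eps_n=\eta_n$.

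The only genuine obstacle is the extreme-point calculation in $\ell_1$, which is a short sign-chasing argument; the remaining ingredients (Mazur--Ulam, adjoints of isometries being isometries, and the preservation of extreme points under surjective linear isometries) are entirely standard. A more hands-on alternative would be to study $T(e_n)$ directly using identities such as $\|T(e_n)+T(e_m)\|_\infty=\|e_n+e_m\|_\infty=1$ for $n\neq m$, but this amounts to the same extremality computation in a less conceptual form.
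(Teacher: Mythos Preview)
The paper does not prove this theorem; it is quoted as a classical result with a reference to Lindenstrauss--Tzafriri (Theorem~2.f.14 of~\cite{MR0500056}). Your argument via Mazur--Ulam and the adjoint on $\ell_1$, using that a surjective linear isometry permutes the extreme points $\{\pm e_n\}$ of $B_{\ell_1}$, is correct and is essentially the standard proof one finds in such references.
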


The next theorem shows that all step-isometries on $c_0$ have a
particular form. \u{S}emrl~\cite{MR4222374} gave an analogous
description for all spaces $\ell_p$ with $1<p\le \infty$ with
$p\not=2$. It is likely that  the description for $c_0$ follows
from his techniques but, as he does not state the result for $c_0$,
and the proof in this case is much simpler than the general case, we give it here.

\begin{theorem}\label{t:c0-step-isom}
  Let $T$ be a step-isometry on $c_0$ fixing zero. Then there exists a
  permutation $\sigma$ of $\N$ and step-isometries $f_i\colon \R\to\R$
  fixing zero such that $T\left((x_i)_{i=1}^\infty\right)=(f_i(x_{\sigma(i)}))_{i=1}^\infty$.
\end{theorem}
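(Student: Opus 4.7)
The plan is to use Corollary \ref{c:omaldic-semrl} together with the structure of isometries of $c_0$ (Theorem \ref{t:c0-isom}) to reduce to a step-isometry $T$ that is close to the identity, and then to pin down $T$ coordinate-wise using integer-distance preservation (Lemma \ref{l:integer-distances}). First I would invoke Corollary \ref{c:omaldic-semrl} to obtain a linear isometry $U$ with $\|T(x) - U(x)\| \le 2$ for all $x$; by Theorem \ref{t:c0-isom} this $U$ has the form $U((x_i)) = (\eps_i x_{\sigma(i)})$. Replacing $T$ by $U^{-1} T$ (still a step-isometry fixing $0$) reduces the problem to showing that any step-isometry $T$ with $T(0) = 0$ and $\|T(x) - x\| \le 2$ has the form $T(x) = (g_i(x_i))_i$ for step-isometries $g_i \colon \R \to \R$ fixing $0$; unwinding the composition with $U$ then yields the theorem with $f_i(\cdot) = \eps_i g_{\sigma(i)}(\cdot)$.

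Next I would establish that $T$ sends each coordinate axis into itself. For integer $|n| \ge 5$ the bound $\|T(ne_j) - ne_j\| \le 2$ together with $\|T(ne_j)\| = |n|$ (integer distance, Lemma \ref{l:integer-distances}) forces $T(ne_j)_j = n$; comparing $T(ne_j)$ with $T(\pm n e_i)$ for $i \ne j$ via the integer distance $\|ne_j \mp n e_i\| = n$ and the bounds $|T(\cdot)_l| \le 2$ on the small coordinates sandwiches $T(ne_j)_i$ into $[0,2] \cap [-2,0] = \{0\}$. Then for general $t \in \R$, comparing $T(te_j)$ with $T(n e_i) = n e_i$ for integer $n > |t| + 4$ and using the integer distance $\|te_j - n e_i\| = n$ forces $|T(te_j)_i - n| = n$, which with $|T(te_j)_i| \le 2$ gives $T(te_j)_i = 0$ for $i \ne j$. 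Setting $g_j(t) := T(te_j)_j$, we obtain $T(te_j) = g_j(t) e_j$, and $g_j$ is visibly a step-isometry of $\R$ fixing $0$ with $|g_j(t) - t| \le 2$.

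The heart of the argument is extending from the axes to all of $c_0$. A subsidiary fact I would establish first is that $g_j(s + n) = g_j(s) + n$ for every integer $n \ge 3$: Lemma \ref{l:integer-distances} gives $|g_j(s+n) - g_j(s)| = n$, and the Omladi\v{c}--\u{S}emrl bounds $|g_j(s) - s|, |g_j(s + n) - (s + n)| \le 2$ rule out the minus sign once $n \ge 3$. Then, given $x \in c_0$ and an index $j$, I would choose an integer $n > \max(\|x\| + 1,\, \sup_{i \ne j} |x_i|,\, 4)$ and use the test point $y = (x_j + n) e_j$. Since $\|x - y\| = n$, Lemma \ref{l:integer-distances} gives $\|T(x) - T(y)\| = n$, and the axis calculation together with the periodicity gives $T(y) = (g_j(x_j) + n)\, e_j$. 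The bound $\sup_{i \ne j} |T(x)_i| \le \|T(x)\| < \|x\| + 1 < n$ forces the sup norm to be attained in the $j$-th coordinate, so $|T(x)_j - g_j(x_j) - n| = n$; the Omladi\v{c}--\u{S}emrl bound $|T(x)_j - x_j| \le 2$ then selects the root $T(x)_j = g_j(x_j)$.

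I expect the main obstacle to be exactly this final step: the sup norm on $c_0$ makes it hard to isolate a single coordinate from a norm equation, and the usual continuity-plus-truncation route is not obviously available, since the continuity of step-isometries on $c_0$ is itself part of what this subsection is supposed to deliver. The device of choosing a test point that produces an exact integer distance, combined with the periodicity $g_j(s + n) = g_j(s) + n$, is what allows a purely algebraic extraction of $T(x)_j$ without any continuity input.
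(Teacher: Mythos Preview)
Your proposal is correct and follows essentially the same route as the paper: reduce via Corollary~\ref{c:omaldic-semrl} and Theorem~\ref{t:c0-isom} to a step-isometry within distance~$2$ of the identity, define the coordinate maps via $T(te_j)$, establish the periodicity $g_j(s+n)=g_j(s)+n$ for large integer~$n$, and then use the test point $(x_j+n)e_j$ at exact integer distance from $x$ to read off $T(x)_j$. The one variation is that you spend an extra step proving $T(te_j)$ lies exactly on the $j$-axis, which makes ``$g_j$ is a step-isometry'' immediate; the paper skips this and instead only uses that the off-axis coordinates of $T(\lambda e_k)$ are bounded by~$2$, invoking Lemma~\ref{l:m-step-isometry-vector-space} to upgrade $f_k$ from a step-isometry for large distances to a genuine step-isometry.
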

We remark that the converse is not true -- as we saw in
Section~\ref{s:not-extend}, maps defined in terms of step-isometries in
each coordinate may not even map $c_0$ to itself.
\begin{proof} 
By Corollary~\ref{c:omaldic-semrl} we know that every step-isometry $T$ on $X$ is close to an isometry
$U$. First suppose that $T$ is close to the identity: that is
$\|T(x)-x\|\le 2$ for all $x\in X$.

Fix $k\in \N$ and consider the map $f_k\colon\R\to\R$ defined by
$f_k(\lambda)=T(\lambda e_k)_k$. By our assumption on $T$ we know that
$\|T(\lambda e_k)-\lambda e_k\|\le 2$ so $|f_k(\lambda)-\lambda|\le2$
for all $\lambda\in \R$.

Now suppose $\lambda, \mu\in \R$ with
$\lfloor |\lambda-\mu|\rfloor =m\ge 5$. Since $T$ is a step-isometry
we know that $\lfloor\|T(\lambda e_k)-T(\mu e_k)\|\rfloor=m$ but we
also know that any coordinate of $T(\lambda e_k)$ or $T(\mu e_k)$
except the $k$th have absolute value at most $2$. Thus,
$\lfloor\|T(\lambda e_k)-T(\mu e_k)\|\rfloor=m$ implies that
$\lfloor|T(\lambda e_k)_k-T(\mu e_k)_k|\rfloor=m$, i.e. $\lfloor
|f_k(\lambda)-f_k(\mu)|\rfloor=m$. We have shown that each $f_k$ is a step-isometry for large distances (explicitly that it has
the step-isometry property for all $m\ge 5$), so by
Lemma~\ref{l:m-step-isometry-vector-space} is actually a
step-isometry.

By Lemma~\ref{l:integer-distances} we know that step-isometries
preserve integer distances: hence, for any integer $M$ and any
$\lambda$ we have $f_k(M+\lambda)=f_k(\lambda)\pm M$. Since
$|f_k(x)-x|\le 2$ for all $x\in\R$ we see that, for any $M\in\Z$ with
$|M|\ge 3$ we have $f_k(M+\lambda)=f_k(\lambda)+M$. 

We need to show that for any $x\in X$ we have $T(x)_i=f_i(x_i)$.  Pick
$M\in \N$ with $M>\|x\|+4$ and fix $k$. Then
$\|(M+x_k)e_k-x\|=M$. Again by Lemma~\ref{l:integer-distances} we have
$\|T((M+x_k)e_k)-T(x)\|=M$.

By definition $T((M+x_k)e_k)_k=f_k(M+x_k)$ and since
$\|T((M+x_k)e_k)-(M+x_k)e_k\|\le 2$ we also have
$|T((M+x_k)e_k)_i|\le2$ for all $i\not=k$. Similarly $\|T(x)-x\|\le 2$
so $|T(x)_i|\le |x_i|+2$ for all $i$. It follows that
$|T((M+x_k)e_k)_i-T(x)_i|\le |x_i|+4<M$ for all $i\not=k$.  Hence $
\|T((M+x_k)e_k)-T(x)\|=M$ implies that $|f_k(M+x_k)-T(x)_k|= M$.
Since $|M|\ge 3$, we have $f_k(M+x_k)=M+f_k(x_k)$ so this implies
$|M+f_k(x_k)-T(x)_k|= M$. Since $2M>|f_k(x_k)|+|T(x)_k|$ this implies
that $T(x)_k=f_k(x_k)$ as claimed.

We have shown that, in the case where $T$ is close to the identity it
is of the form
$T\left((x_i)_{i=1}^\infty\right)=(f_i(x_i))_{i=1}^\infty$  for
step-isometries $f_1,f_2\dots$.

Now consider a general step-isometry $T$ fixing zero. By
Corollary~\ref{c:omaldic-semrl} there exists an isometry $U$ of $c_0$
such that $\|T(x)-U(x)\|\le 2$ for all $x\in X$. Then $U^{-1}T$ is a
step-isometry and $\|U^{-1}T(x)-x\|=\|T(x)-U(x)\|\le2$ so the first
part of the proof applies to $U^{-1}T$: we have $U^{-1}T\left((x_i)_{i=1}^\infty\right)=(f_i(x_i))_{i=1}^\infty$  for
step-isometries $f_1,f_2\dots$.
 
Finally, since $U$ is an isometry we can apply
Theorem~\ref{t:c0-isom}: let $\eps_i$ and $\sigma$ be such that
$U\left((x_i)_{i=1}^\infty\right)=(\eps_ix_{\sigma(i)})_{i=1}^\infty$.
Hence
\[
T\left((x_i)_{i=1}^\infty\right) =U((f_i(x_i))_{i=1}^\infty) =(\eps_if_{\sigma(i)}(x_{\sigma(i)}))_{i=1}^\infty.
\]
Since $\eps_if_{\sigma(i)}$ is a step-isometry the result follows.
\end{proof}

\begin{corollary}
  Let $T$ be a step-isometry on $c_0$. Then $T$ is continuous.
\end{corollary}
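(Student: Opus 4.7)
The plan is to exploit the coordinate-wise description of $T$ supplied by Theorem~\ref{t:c0-step-isom}. Since translation by $T(0)$ is a (continuous) isometry, we may assume $T(0)=0$. Theorem~\ref{t:c0-step-isom} then produces a permutation $\sigma$ of $\N$ and step-isometries $f_i\colon\R\to\R$ with $f_i(0)=0$ such that $T((x_i)_{i=1}^\infty)=(f_i(x_{\sigma(i)}))_{i=1}^\infty$. The coordinate-permutation $P_\sigma\colon(x_i)\mapsto(x_{\sigma(i)})$ is a linear isometry of $c_0$, so it suffices to establish continuity of the purely coordinate-wise map $F((x_i))=(f_i(x_i))$.

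Each individual $f_i$ is continuous: $\R$ is reflexive, so Corollary~\ref{c:step-isom-cts} applies to any step-isometry on $\R$.

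The core step, which I expect to be the main obstacle, is upgrading this to equicontinuity of the family $(f_i)$ at $0$: for every $\eps>0$ there exists $\delta>0$ such that $|t|<\delta$ implies $|f_i(t)|<\eps$ for all $i$ simultaneously. I would prove this by contradiction. Suppose the statement fails; then one can find $\eps>0$, indices $i_n$ and reals $t_n$ with $|t_n|<1/n$ and $|f_{i_n}(t_n)|\ge\eps$. The individual continuity of each $f_j$ at $0$ (together with $f_j(0)=0$) forbids any single index appearing in the sequence $(i_n)$ infinitely often, so after passing to a subsequence the $i_n$ are distinct. Define $y\in c_0$ by $y_{i_n}=t_n$ for each $n$ and $y_j=0$ otherwise; then $F(y)_{i_n}=f_{i_n}(t_n)$ has absolute value at least $\eps$ for every $n$, so $F(y)\notin c_0$, contradicting the hypothesis that $T$ (and therefore $F$) maps $c_0$ into itself.

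With equicontinuity at $0$ in hand, continuity of $F$ at an arbitrary $x\in c_0$ follows by a routine finite/infinite split. Given $\eps>0$, choose $\delta_0>0$ from the equicontinuity statement; choose $N$ with $|x_i|<\delta_0/2$ for all $i>N$; and for the finitely many indices $i\le N$ use the individual continuity of each $f_i$ at $x_i$ to obtain $\delta_1>0$ such that $|y_i-x_i|<\delta_1$ implies $|f_i(y_i)-f_i(x_i)|<\eps$. Taking $\|y-x\|_{c_0}<\min(\delta_0/2,\delta_1)$ forces $|y_i|<\delta_0$ for every $i>N$, so on those coordinates $|f_i(y_i)-f_i(x_i)|\le|f_i(y_i)|+|f_i(x_i)|<2\eps$; combining with the bound for $i\le N$ yields $\|F(y)-F(x)\|_{c_0}\le 2\eps$, as required.
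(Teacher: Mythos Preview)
Your proof is correct and follows essentially the same route as the paper: after reducing to the coordinate-wise form via Theorem~\ref{t:c0-step-isom}, both arguments derive a contradiction by gluing a sequence of ``bad'' coordinates (small input, output bounded away from~$0$) into a single vector in $c_0$ whose image is not in $c_0$. The one cosmetic difference is that the paper observes that by translation it suffices to prove continuity at~$0$ (so the gluing argument alone finishes), whereas you phrase the same contradiction as equicontinuity of the $f_i$ at~$0$ and then add a finite/infinite split to handle a general point; the paper's wrap-up is a little shorter, but the core idea is identical.
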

\begin{proof}
  By translation, we may assume that $T$ fixes zero, and then it suffices to prove that $T$ is continuous at zero.
  
  Let $\sigma$ and $f_1,f_2,\ldots$ be as given by
  Theorem~\ref{t:c0-step-isom}. By permuting the coordinates, we may assume $\sigma$ is the identity. Suppose that $T$ is not continuous
  and let $\eps$ be such that for all $\delta>0$ there exist $x\in X$
  with $\|x\|<\delta $ and $\|T(x)\|> \eps$. Pick $x\ton$ such that
  $\|x\ton\|<1/n$ and $\|T(x\ton)\|>\eps$. There must exist $i=i(n)$
  such that $|T(x\ton)_i|=|f_i(x\ton_i)|>\eps$ and
  $|x\ton_i|<1/n$. Since each $f_i$ is continuous, by passing to a
  subsequence if needed, we may assume that the $i(n)$ is strictly
  increasing. Define
\[x=\sum_nx\ton_{i(n)} e_{i(n)}.
\]
Since the $i(n)$ are increasing and $|x\ton_i|<1/n\to 0$ we see that
$x\in c_0$. However, for any $n$,
$|T(x)_{i(n)}|=|f_{i(n)}(x\ton_{i(n)})|>\eps$ which shows that $T(x)$
is not in $c_0$ which is a contradiction.
\end{proof}

\section{Extending step-isometries on reflexive spaces}\label{s:extend-reflexive}
We have seen that in some Banach spaces (explicitly in $c_0$) it is
not always possible to extend step-isometries on a countable dense set
to step-isometries on the whole space. In this section we show that in any reflexive Banach space it is always possible to extend step-isometries on dense subsets to step-isometries on the whole space. Whilst countable dense sets in separable reflexive spaces are our main focus, since the proof works unchanged for dense sets, in any reflexive space we state the result for all reflexive spaces.

From the previous section we know that step-isometries on 
reflexive Banach spaces are necessarily continuous. This immediately
shows that any extension to a step-isometry is unique. However, we
do not see any way to use continuity to prove the existence of such an
extension. Indeed, the fact that, on $c_0$,  all step-isometries are
continuous, but it is not necessarily possible to extend a
step-isometry from a dense subset to the whole space, suggests that
there may not be a direct proof.

Instead, we work with the weak topology rather than the norm
topology. Since the space is reflexive, the unit ball is compact in the weak topology, and hence sequentially compact by the Eberlein-\v{S}mulian theorem (see e.g.,~\cite{MR0737004}).

\begin{theorem}\label{t:step-isos-extend}
  Let $T$ be a step-isometry on a dense set $S$ in a
  reflexive Banach space $X$. Then $T$ extends uniquely to a
  step-isometry $\bar T$ on the whole of $X$. 
\end{theorem}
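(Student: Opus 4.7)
The plan is to exploit weak compactness: in a reflexive space the closed unit ball is weakly sequentially compact by Eberlein--\v{S}mulian. I will define $\bar T(x)$ as a weak limit of $T(s_n)$ for sequences $s_n \in S$ converging to $x$ in norm, then verify the step-isometry property.

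Step 1 (Construction). Fix $x \in X$ and choose $(s_n) \subset S$ with $s_n \to x$ in norm. Since $(s_n)$ is Cauchy, eventually $\|s_n - s_m\| < 1$, whence $\|T(s_n) - T(s_m)\| < 1$ by the step-isometry property. Thus $(T(s_n))$ is bounded, and by Eberlein--\v{S}mulian some subsequence converges weakly to a limit $y$; I tentatively set $\bar T(x) := y$.

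Step 2 (Main obstacle: uniqueness of the weak limit). The central technical difficulty is showing $y$ is independent of the sequence and subsequence chosen, so $\bar T$ is well defined. Weak lower semicontinuity of the norm yields, for each $s \in S$ with $\|x-s\| \notin \mathbb{Z}$, only a one-sided estimate $\|y - T(s)\| \le \lfloor \|x - s\|\rfloor + 1$; the matching lower bound does not follow from weak convergence alone, since complements of open balls need not be weakly open. I expect the proof to combine these upper-bound constraints over a dense family of $s \in S$, exploit the symmetric constraints coming from $T^{-1}$ (also a step-isometry on $S$), and rule out two distinct candidate limits $y_1 \ne y_2$ by extracting carefully chosen witnesses in $S$ giving incompatible distance estimates. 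This step fails precisely in $c_0$ (cf.\ Section \ref{s:not-extend}), where the unit ball is not weakly compact.

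Step 3 (Step-isometry property and uniqueness of the extension). With $\bar T \colon X \to X$ defined, applying the same construction to $T^{-1}$ yields a two-sided inverse, so $\bar T$ is bijective. To verify the step-isometry property I will show $\bar T$ is a step-isometry for large distances and then invoke Lemma \ref{l:m-step-isometry-vector-space}. If $\lfloor \|x-y\|\rfloor + 1 < k$, pick $s_n \to x$ and $t_n \to y$ with $\|s_n - t_n\| < \lfloor\|x-y\|\rfloor + 1$ for large $n$ (possible since $\|x-y\| < \lfloor\|x-y\|\rfloor + 1$); then $\|T(s_n) - T(t_n)\| < \lfloor\|x-y\|\rfloor + 1 < k$, so weak lower semicontinuity gives $\|\bar T(x) - \bar T(y)\| \le \lfloor\|x-y\|\rfloor + 1 < k$. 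The converse direction comes from the symmetric bound for $\bar{T^{-1}}$. The edge cases where $\|x-y\|$ is close to an integer require extra care, likely using integer distance preservation (Lemma \ref{l:integer-distances}) combined with bijectivity of $\bar T$. Uniqueness of the extension is then automatic: by Corollary \ref{c:step-isom-cts} any step-isometry of $X$ is norm continuous, so an extension is determined by its values on the dense set $S$.
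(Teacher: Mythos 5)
Your overall architecture matches the paper's: define $\bar T(x)$ as a weak limit of $T(s_n)$ using Eberlein--\v{S}mulian, then fight to establish well-definedness and the step-isometry property. You have also correctly located the central difficulty: weak lower semicontinuity of the norm only yields upper bounds (from $\|T(s_n)-T(s)\|<m$ eventually, one gets $\|\bar T(x)-T(s)\|\le m$), and the complement of a ball is not weakly open, so no matching lower bound comes for free. But at exactly this point the proposal stops being a proof. In Step 2 you say you ``expect'' to rule out two distinct candidate limits by combining upper bounds with symmetric constraints from $T^{-1}$, and in Step 3 the converse inequality is again deferred to ``the symmetric bound for $\bar{T^{-1}}$''. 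Neither can be made to work as stated: to invoke the step-isometry property of $T^{-1}$ on the sequence $T(s_n)$ you must first place $T(s_n)$ eventually inside some ball of integer radius centred at a point of $S$ and disjoint from a given ball around $T(s)$ --- which is precisely the lower-bound information you are trying to derive. Moreover, your $\bar{T^{-1}}$ is built from norm-convergent sequences, while $T(s_n)$ converges to $\bar T(x)$ only weakly, so $\bar{T^{-1}}(\bar T(x))=x$ does not follow from your definitions; the claimed bijectivity is therefore also unsupported.

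The missing ingredient is the paper's Lemma~\ref{l:balls} (refined in Lemma~\ref{l:nice-our-case}): in a reflexive space, if $x\ton\wto x$ and $B$ is a closed ball with $x\notin B$, then there is an open ball --- which may be taken of integer radius and centred at a point of $S$ --- disjoint from $B$ and eventually containing the $x\ton$. This is proved by separating $x$ from $B$ with a half-space and approximating that half-space near $x$ by a large ball, using \fr differentiability of the norm at a dense set of points (Asplund's theorem, available in any reflexive space). With this in hand, the failure of $\|\bar T(x)-T(y)\|\le m$ produces a ball $B^\circ(s',m')$ with $s'\in S$, disjoint from $B(T(y),m)$ and eventually containing $T(x\ton)$; pulling this configuration back through $T^{-1}$ yields the contradiction (Lemma~\ref{l:seq-conv-banach}), and from there the two-sided Proposition~\ref{p:seq-conv-banach}, uniqueness of weak limits, and the step-isometry property of $\bar T$ all follow. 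Without this geometric input your Steps 2 and 3 are statements of intent rather than arguments, so the proposal has a genuine gap at the heart of the theorem.
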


As remarked above, since $S$ is dense and, by the results of
Section~\ref{s:continuous} any step-isometry on $X$ is continuous, the
uniqueness is immediate.

We are going to define the extension in terms of weak limits: for any
$x\in X$ take a sequence $x\ton\in S$ with $x\ton\wto x$ and define
$\bar T(x)$ to be the weak limit of the sequence $T(x\ton)$. Of
course, at the moment we do not know either that the sequence
$T(x\ton)$ weakly converges, or that different sequences weakly
converging to $x$ all give the same value for $\bar T(x)$.

We remark that we could define $\bar T$ in terms of sequences
$x\ton\to x$ in norm rather than weakly, but that does not seem to
simplify the argument. Also note that, the proof does make use of the
fact that $S$ is norm dense -- weak density would not be sufficient.

The overall structure of our argument in this section will be somewhat similar
to that of Section~4 of Balister, Bollob\'as, Gunderson, Leader and
Walters~\cite{MR3841851} but the proofs of many of the steps will be
very different as we will be working with weak limits not norm limits.
The key step is the following proposition which is analogous to
Lemma~18 of \cite{MR3841851}.

\begin{proposition}\label{p:seq-conv-banach}
  Let $T$ be a step-isometry on a dense set $\cds$ in a
   reflexive Banach space. Suppose that $(x\ton )_{n=1}^\infty$,
  $(y\ton )_{n=1}^\infty$ are sequences in $\cds$ weakly converging to
  $x$ and $y$ respectively, and that $T(x\ton )$, $T(y\ton )$ weakly
  converge to $x'$ and $y'$ respectively. Then, for any $m\in \N$ with
  $m\ge 3$ we have $\|x-y\|<m$ if and only if $\|x'-y'\|<m$.

  In particular, for any $m\in \N$ with $m\ge 3$ and any $y\in \cds$
  we have $\|x-y\|<m$ if and only if $\|x'-T(y)\|< m$.
\end{proposition}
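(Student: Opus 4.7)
Proof proposal: The implication is symmetric under $T\leftrightarrow T^{-1}$ (which is also a step-isometry on $\cds$, with the roles of the two weakly convergent sequences swapped), so it suffices to prove the forward direction: assuming $\|x-y\|<m$, to deduce $\|x'-y'\|<m$. My plan rests on three pillars: the norm-density of $\cds$ in $X$, the step-isometry property applied between elements of $\cds$, and the weak lower semi-continuity of the norm (from reflexivity).

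The natural first move is to use density to pick $u,v\in\cds$ with $\|u-x\|<\delta$ and $\|v-y\|<\delta$, where $\delta>0$ is chosen small enough that $\|u-v\|<\|x-y\|+2\delta<m$. The step-isometry property on $\cds$ then yields $\lfloor\|T(u)-T(v)\|\rfloor=\lfloor\|u-v\|\rfloor\le m-1$, hence $\|T(u)-T(v)\|<m$. The triangle inequality
\[
\|x'-y'\|\le \|x'-T(u)\|+\|T(u)-T(v)\|+\|T(v)-y'\|
\]
then reduces the problem to controlling the two outer terms. For the first, $T(x\ton)-T(u)\wto x'-T(u)$, so by weak lower semi-continuity $\|x'-T(u)\|\le\liminf_n\|T(x\ton)-T(u)\|$, and by step-isometry $\|T(x\ton)-T(u)\|\le\|x\ton-u\|+1$. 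A symmetric estimate applies to $\|T(v)-y'\|$.

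The main obstacle lies here: under weak convergence $x\ton\wto x$, the sequence $\|x\ton-u\|$ need not be close to $\|x-u\|<\delta$, since weak convergence does not preserve the norm (e.g.\ $e_n\wto 0$ in $\ell_2$ with $\|e_n\|=1$). Consequently, the crude bound $\|x'-T(u)\|\le\liminf_n\|x\ton-u\|+1$ is not sharp enough to combine with $\|T(u)-T(v)\|<m$ and conclude $\|x'-y'\|<m$. To circumvent this I would bring in Corollary~\ref{c:omaldic-semrl}: after translating so that $T(0)=0$, there is a linear isometry $U$ of $X$ with $\|T(s)-U(s)\|\le 2$ for every $s\in\cds$. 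Since $U$ is weakly continuous (as a bounded linear map), $U(x\ton)\wto U(x)$, and then $T(x\ton)-U(x\ton)\wto x'-U(x)$ with $\|x'-U(x)\|\le 2$ by weak LSC. The same estimate gives $\|y'-U(y)\|\le 2$, and in particular $\|x'-y'\|\le\|x-y\|+4$.

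This crude comparison is not by itself sufficient when $m=3$, but it is the right global ingredient: it anchors $x'$ and $y'$ close to $U(x)$ and $U(y)$ within a universal constant. I expect the sharp conclusion to follow by combining this O--\u{S} comparison with the integer-scale information extracted from the step-isometry property on $\cds$ applied to pairs $(x\ton,u)$ and $(y\ton,v)$, passing to subsequences (available thanks to reflexivity and the Eberlein--\v{S}mulian theorem) along which both $\lfloor\|x\ton-u\|\rfloor$ and $\lfloor\|T(x\ton)-T(u)\|\rfloor$ stabilise, and using a Hahn-Banach functional attaining the norm on $x'-T(u)$ (respectively $T(v)-y'$) to convert integer-floor equalities into a quantitative control of the weak limits. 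The hypothesis $m\ge 3$ appears to provide precisely the buffer needed to absorb the $\pm 2$ Omladi\v{c}--\u{S}emrl constant and the $\pm 1$ step-isometry slack, which is why the conclusion fails for $m=1,2$ but succeeds from $m=3$ onwards.
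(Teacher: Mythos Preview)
Your proposal correctly identifies the central obstacle --- that weak convergence $x\ton\wto x$ gives no control on $\|x\ton-u\|$ even when $\|x-u\|$ is tiny --- but it does not overcome it. The Omladi\v{c}--\u{S}emrl comparison yields only $\|x'-y'\|\le\|x-y\|+4$, and your closing paragraph is not a proof: stabilising $\lfloor\|x\ton-u\|\rfloor$ along a subsequence to some integer $k$ still only gives $\|x'-T(u)\|\le k+1$ via weak lower semi-continuity, and $k$ is in general \emph{not} small (take $x\ton=e_n\wto 0$ in $\ell_2$ with $u=0$). No amount of Hahn--Banach bookkeeping turns this into the needed bound $\|x'-T(u)\|\le 1$; the slack is of the wrong order, not off by a constant you can absorb in the hypothesis $m\ge 3$.

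The paper's argument supplies exactly the missing geometric ingredient. The key step (Lemma~\ref{l:seq-conv-banach}) is the sharp estimate: if $y\in\cds$ and $\|x-y\|<m$, then $\|x'-T(y)\|\le m$. This is proved by contradiction via a ball-trapping lemma (Lemma~\ref{l:balls}): if $T(x\ton)\wto x'$ with $x'\notin B(T(y),m)$, then using Asplund's theorem on the density of points of \fr differentiability of the norm, one can find a \emph{large} open ball $B^\circ(s',m')$ with $s'\in\cds$, $m'\in\N$, disjoint from $B(T(y),m)$, containing the tail of $T(x\ton)$. Pulling this back through $T^{-1}$ (which respects integer-radius balls) and using weak lower semi-continuity yields $\|x-y\|\ge m$, a contradiction. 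Once this lemma is in hand, the proposition is immediate: choose $s,t\in\cds$ with $\|x-s\|<1$, $\|s-t\|<m-2$, $\|t-y\|<1$; then $\|x'-T(s)\|\le 1$, $\|T(s)-T(t)\|<m-2$, $\|T(t)-y'\|\le 1$, and the triangle inequality gives $\|x'-y'\|<m$. The role of $m\ge 3$ is simply to allow the decomposition $1+(m-2)+1$, not to absorb an Omladi\v{c}--\u{S}emrl constant.
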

This proposition would be straightforward to prove if all the
sequences converged in the norm topology (see Lemmas~17 and~18
of~\cite{MR3841851}) but we are only assuming convergence in the weak
topology. Since step-isometries are defined in terms of the norm we
need to relate the weak topology with the norm.

Suppose that we have a sequence $x\ton\wto x$ with $\|x\|>m$. It is
immediate that we can find a half-space (a set of the form
$H=\{z\in X:f(z)>\alpha\}$ for some $\alpha\in\R$ and a non-zero functional
$f\in X^*$), and $N\in\N$ such that $H$ is disjoint from $B(0,m)$ and
$x\ton\in H$ for all $n\ge N$. However, step-isometries need not
behave `nicely' with respect to half-spaces, so we would like to
approximate the half-space $H$ `near $x$' by a (large)
ball. Step-isometries do behave well with respect to balls -- at least
open balls with integer radius.

It would be natural to try to approximate the half-space by a ball
about a multiple of $x$, but that may not be possible -- the unit ball
may have a `corner' at $x$. For example even in two dimensions the
unit ball in $\ell_1^2$ has corners at the basis vectors, and we
cannot approximate the half space $\{(x,y)\in \ell_1^2,\ y>1\}$ near
$(0,1)$ by a ball about $(0,\lambda)$ for any $\lambda$.

However, at such a point we can pick a different half-space disjoint
from $B(0,1)$, and a ball approximating that half-space near
$(0,1)$. For example, the ball of radius $2k$ about $(k,k+1)$
approximates the half-space $\{(x,y):x+y>1\}$ near $(0,1)$.

Showing that, in any reflexive Banach space, we can
approximate half spaces by balls is the key result of this
section. Later, we will need to slightly strengthen this result
to insist that the approximating ball has integer radius and is
centred at a point of our dense set $S$. But, whilst that
requires a little care, it is essentially trivial, so we focus first on
proving the result for arbitrary balls.

\begin{lemma}\label{l:balls}
  Let $X$ be a  reflexive Banach space, $(x^{(n)})_{n=1}^\infty$ be
  a sequence in $X$ weakly converging to $x$, and $B$ be a closed
  ball not containing $x$. Then there exists an open ball $D$ with its
  closure $\bar D$ disjoint from $B$, and $N\in \N$ such that
  $x\ton\in D$ for all $n\ge N$.
\end{lemma}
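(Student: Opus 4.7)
The plan is to build $D$ as a large ball centered far from $B$ in a direction close to $x/\|x\|$, using Fréchet differentiability of the norm at a nearby unit vector. First I translate so that $B = \bar B(0,s)$; the hypothesis then reads $\|x\| > s$, and I set $d := \|x\|-s > 0$. Since $X$ is reflexive it is an Asplund space, so the norm is Fréchet differentiable on a dense subset of the unit sphere. I choose a unit vector $u$ close to $x/\|x\|$ at which the norm is Fréchet differentiable, with Fréchet derivative $f \in X^*$ satisfying $\|f\|=1$ and $f(u)=1$; by taking $u$ sufficiently close to $x/\|x\|$ I can guarantee $f(x) > s + d/2$, and then weak convergence $x^{(n)} \wto x$ gives $f(x^{(n)}) > s + d/3$ for all $n$ beyond some $N_0$.

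Next I consider the ball $D_t = B(tu,\, t - s - d/4)$ for large $t$. The separation condition is automatic: $\|tu\| - (t - s - d/4) = s + d/4 > s$, so $\bar D_t \cap B = \emptyset$ for every $t > 0$. The key remaining task is to choose $t$ large enough that $\|x^{(n)} - tu\| < t - s - d/4$ holds for all $n \geq N$. I rewrite $\|tu - x^{(n)}\| = t\,\|u - x^{(n)}/t\|$. Fréchet differentiability at $u$ supplies the uniform expansion $\|u - h\| = 1 - f(h) + o(\|h\|)$ as $h \to 0$; substituting $h = x^{(n)}/t$ (whose norm is $O(1/t)$ uniformly in $n$, since the sequence is bounded) yields
\[
\|tu - x^{(n)}\| = t - f(x^{(n)}) + o(1)\qquad\text{as } t \to \infty, \text{ uniformly in } n.
\]
Combined with $f(x^{(n)}) > s + d/3$ for $n \geq N_0$, this gives $\|tu - x^{(n)}\| < t - s - d/4$ for all sufficiently large $t$ and all $n \geq N$, completing the construction.

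The main obstacle I anticipate is exactly the step where reflexivity enters. A direct attempt with $u = x/\|x\|$ using only a Hahn-Banach norming functional gives at best the triangle estimate $\|tu - x^{(n)}\| \le (t - \|x\|) + \|x^{(n)} - x\|$, which is too weak because $\|x^{(n)} - x\|$ need not go to zero under weak convergence alone. The substantive input is the Asplund property of reflexive spaces: by perturbing $u$ slightly to a point of Fréchet differentiability of the norm, while still retaining enough separation of $x$ from $B$ through the new derivative $f$, I obtain the uniform-in-$n$ norm expansion that upgrades the lower bound $\|tu - x^{(n)}\| \ge t - f(x^{(n)})$ into a matching upper bound, which is what allows the ball to be chosen.
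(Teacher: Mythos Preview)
Your argument is correct and follows essentially the same route as the paper's proof: after translating $B$ to be centred at the origin, both proofs invoke Asplund's theorem (via reflexivity) to pick a unit vector $u$ near $x/\|x\|$ at which the norm is Fr\'echet differentiable, and then take $D$ to be a large ball centred at $tu$ for $t$ sufficiently large, using the uniform Fr\'echet estimate together with boundedness of the weakly convergent sequence to bound $\|tu - x^{(n)}\|$ from above. The paper writes the estimate via an explicit decomposition $x^{(n)} = f(x^{(n)})u + y^{(n)}$ with $y^{(n)}\in\ker f$, while you package the same computation as $\|tu - x^{(n)}\| = t - f(x^{(n)}) + o(1)$ uniformly in $n$; these are the same calculation.
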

\noindent%
We will use the \fr differentiability of the norm to prove this lemma. Recall, the norm
is \emph{\fd} at a point $x\in X$ if there exists $f_x\in X^*$ such that for all $\eps>0$
there exists $\delta>0$ such that
  \[
    \Big|\|x+y\|-\|x\|-f_x(y)\Big|\le \eps \|y\|,
  \]
  whenever $\|y\|\le\delta$. The functional $f_x$ is called the \emph{\fr
  derivative}. It is a norming functional for $x$: that is, $f_x$ has
  norm 1, and $f_x(x)=\|x\|$. Also if the norm is differentiable at
  $x$ then, for any $\lambda>0$, the norm is also differentiable at
  $\lambda x$ with $f_{\lambda x}=f_x$. 

  The example above of the unit ball in $\ell_1^2$ shows that the norm
  need not be \fd everywhere. However, Asplund~\cite{MR0231199} proved the following
  theorem.
\begin{theorem*}[Asplund's Theorem]
  Let $X$ be a Banach space such that every separable subspace has separable dual. Then, the set
  of points at which the norm is \fd is dense in $X$. 
\end{theorem*}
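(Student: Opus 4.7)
The plan is to combine the \v{S}mulian slicing criterion for Fr\'echet differentiability with a Baire category argument, after reducing to the separable case. Recall the \v{S}mulian criterion: the norm of $X$ is Fr\'echet differentiable at $x \neq 0$ if and only if for every $\eps > 0$ there exists $\alpha > 0$ such that the slice
\[
S(x,\alpha) = \{\, f \in B_{X^*} : f(x) > \|x\| - \alpha \,\}
\]
has $\|\cdot\|_{X^*}$-diameter less than $\eps$. Thus density of the Fr\'echet differentiability set reduces to the existence, arbitrarily close to any prescribed point, of a direction whose associated weak-$*$ slices of $B_{X^*}$ shrink in norm.

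I would first carry out a separable reduction. Given $x_0 \in X$ and $\delta > 0$, I would iteratively build a closed separable subspace $Y \subseteq X$ containing $x_0$ such that the slice diameters computed using $X^*$ can be witnessed from within $Y^*$. This is a countable saturation argument: at each stage one adjoins a countable dense set of directions witnessing the current worst slice behaviour, and then takes the closed linear span of the union over countably many stages. By hypothesis, the resulting separable $Y$ has separable dual, and it suffices to find a point of Fr\'echet differentiability of $\|\cdot\|_X$ within distance $\delta$ of $x_0$ inside $Y$.

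Working in a separable $Y$ with separable dual, I would then run a Baire category argument. For each $\eps > 0$, let
\[
A_\eps = \left\{\, y \in Y \setminus \{0\} : \operatorname{diam}(S(y,\alpha)) \geq \eps \text{ for every } \alpha > 0 \,\right\}.
\]
The set of points of Fr\'echet non-differentiability is $\bigcup_{n \in \N} A_{1/n}$, so it suffices to show each $A_\eps$ is nowhere dense. Given a non-empty open $U \subseteq Y$, I would apply Ekeland's variational principle (or equivalently the Bishop-Phelps-Bollob\'as theorem) to perturb a point of $U$ to a nearby $\tilde y$ at which some $f \in B_{Y^*}$ nearly attains its norm in a strong sense, forcing $\operatorname{diam}(S(\tilde y,\alpha)) < \eps$ for a suitable $\alpha > 0$. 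Separability of $Y^*$ is used to confine the search for ``bad'' functionals to a countable dense collection, so that the above perturbation argument terminates.

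The main obstacle is making the separable reduction airtight: Fr\'echet differentiability of the restriction $\|\cdot\|_X\big|_Y$ at a point $y \in Y$ does not, in general, imply Fr\'echet differentiability of $\|\cdot\|_X$ on the ambient space $X$ at $y$, so the saturation must be arranged to stabilise the slice diameters under enlarging $Y$, which is a non-trivial countable-tower construction. An alternative, cleaner but citation-heavier route is to appeal to the Namioka-Phelps-Stegall theorem that $X$ is Asplund if and only if $X^*$ has the Radon-Nikodym property; bounded RNP sets are dentable, i.e.\ have slices of arbitrarily small diameter by Phelps's characterization, and \v{S}mulian then produces a dense set of Fr\'echet differentiability points directly, bypassing both the Baire argument and the explicit separable reduction.
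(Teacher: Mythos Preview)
The paper does not prove Asplund's Theorem at all: it is stated as a quoted result with a citation to Asplund~\cite{MR0231199}, and is then invoked as a black box inside the proof of Lemma~\ref{l:balls}. There is therefore nothing to compare your proposal against in the paper's own text.

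For what it is worth, your sketch is a reasonable outline of how one proves the result in the literature. The \v{S}mulian criterion plus a Baire/variational argument in the separable case is standard, and you are right to flag the separable reduction as the genuinely delicate step: Fr\'echet differentiability of the restricted norm on a subspace $Y$ does not automatically lift to $X$, so one must build $Y$ so that slice diameters in $B_{X^*}$ are faithfully witnessed by $B_{Y^*}$. Your alternative route via the Namioka--Phelps--Stegall equivalence (Asplund $\Leftrightarrow$ $X^*$ has RNP) together with dentability is also a legitimate and commonly cited path. But since the paper treats the theorem as an off-the-shelf tool, none of this is needed here.
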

\noindent%
It is immediate that this theorem applies in any reflexive space.
\begin{proof}[Proof of Lemma~\ref{l:balls}]
  By a translation we may assume that $B$ is centred at $0$ and let
  $r$ be its radius. Further let $\hat x=x/\|x\|$ (since $x\not\in B$,
  $x\not=0$) and $\eps=(\|x\|-r)/4$. By Asplund's Theorem the norm is
  \fd at a dense set of points, so we may pick $\hat z$ be a vector of
  unit norm with
  $\|\hat z-\hat x\|\le \tfrac{\eps}{\|x\|}$ at which the norm is
  \fd, and let $f_{\hat z}$ be the derivative.

  For any vector $w$ we can write
  \[
  w=f_{\hat z}(w) \hat z   +(w-f_{\hat z}(w) \hat z),
  \]
  where the absolute value of the coefficient of $\hat z$ in the first term is at most
  $\|w\|$, and the second term is in $\ker f_{\hat z}$ and has norm at
  most $2\|w\|$.

  We apply this decomposition to each of the $x\ton$. Since $x\ton$ is weakly convergent, it is
  bounded, by $c$ say.  Hence, for each $n$ we can write
  \[
  x^{(n)}=\lambda_n\hat z+y^{(n)},
  \]
  where $\lambda_n=f_{\hat z}(x\ton)$ with $|\lambda_n|\le c$ and $y^{(n)}\in \ker f_{\hat z}$ with
  $\|y^{(n)}\|\le 2c$.

  Let $\eps'=\eps/2c$ and let $\delta$ be as in the definition of \fd
  for $\hat z$, $f_{\hat z}$ and $\eps'$ (that is, so
  $ \big|\|\hat z+y\|-\|\hat z\|-f_{\hat z}(y)\big|\le \eps' \|y\|$
  for any $y$ with $\|y\|\le\delta$).  Let $\alpha_0=2c/\delta$ and
  $z=(\alpha_0+c) \hat z$. Let $D=B^\circ(z,\|z\|-r-\eps)$. Obviously, the closure $\bar D$ is
  disjoint from $B$. It remains to show that $x\ton\in D$ for all
  sufficiently large $n$. We have
  \begin{align*}
    \|z-x^{(n)}\|&= \|z-\lambda_n\hat z-y^{(n)}\|\\
    &= \|(\alpha_0+c-\lambda_n)\hat z-y^{(n)}\|\\
    &= (\alpha_0+c-\lambda_n)\left\|\hat z-\frac{y^{(n)}}{(\alpha_0+c-\lambda_n)}\right\|\\
    &\le (\alpha_0+c-\lambda_n)\left(1+\eps'\left\|\frac{y^{(n)}}{(\alpha_0+c-\lambda_n)}\right\|\right)&    \qquad\text{since $\left\|\dfrac{y\ton}{(\alpha_0+c-\lambda_n)}\right\|\le \frac{2c}{\alpha_0}= \delta$}\\
    &= \|z\|-\lambda_n+\eps'\|y\ton\|\\
    &\le\|z\|-\lambda_n+\eps &\text{since $\|y\ton\|\le 2c$ }.
    \end{align*}
    Finally,
    \begin{align*}
      \lambda_n=f_{\hat z}(x\ton)&\to f_{\hat z}(x)\\
      &=f_{\hat z}(\|x\|\hat
      z)-f_{\hat z}(\|x\|\hat z-x)
      =\|x\|f_{\hat z}(\hat
      z)-\|x\|f_{\hat z}(\hat z-\hat x)\\
      &\ge \|x\|-\|x\|\|\hat z-\hat x\|
      \ge \|x\|-\eps,
    \end{align*}
    so there exists $N$ such that for all $n\ge N$, we have
    $\lambda_n> \|x\|-2\eps$ which implies that
    \[
      \|z-x\ton\|<\|z\|-\|x\|+3\eps=\|z\|-r-4\eps+3\eps=\|z\|-r-\eps,
    \] as required.
  \end{proof}
  As discussed before the previous lemma, we need a very slight
  strengthening of this result: we insist that the ball is centred at
  a point of our countable dense set, and that the ball has integer
  radius.

  \begin{lemma}\label{l:nice-our-case}
  Let $X$ be a reflexive Banach space,    $(x^{(n)})_{n=1}^\infty$ be a sequence in $X$ weakly converging to
  $x$, and $B$ be a closed ball not containing $x$. Further, let $S$ be a dense set in~$X$. Then there
  exists an open ball $D=B^\circ(s,m)$ for some $s\in S$ and $m\in \N$,
  disjoint from $B$, and $N\in \N$ such that $x\ton\in D$ for all $n\ge N$.
\end{lemma}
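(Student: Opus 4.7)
The plan is to apply Lemma~\ref{l:balls} and then adjust the resulting open ball by a short slide-and-perturb argument: first slide its centre further from $B$ along a suitable ray so as to give its radius any desired integer value, and then use density of $S$ to move the centre into $S$.

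First apply Lemma~\ref{l:balls} to obtain an open ball $D_0=B^\circ(z_0,\rho)$ with $\overline{D_0}\cap B=\emptyset$ and some $N_0$ such that $x\ton\in D_0$ for all $n\ge N_0$. Writing $B=B(b,r)$, set $\eta=\|z_0-b\|-\rho-r$, which is strictly positive because $\overline{D_0}$ and $B$ are disjoint closed balls.

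Now slide the centre along the ray from $b$ through $z_0$. Let $\hat v=(z_0-b)/\|z_0-b\|$ and, for $t\ge\|z_0-b\|$, set $w_t=b+t\hat v$, so $\|w_t-b\|=t$ and $\|w_t-z_0\|=t-\|z_0-b\|$. A direct check shows that the open ball $B^\circ(w_t,m)$ contains $D_0$ and is disjoint from $B$ whenever $m$ is an integer with $m+r\le t\le m+\|z_0-b\|-\rho$, and this range for $t$ has length exactly $\eta$ and so is non-empty. Pick any integer $m\ge\rho+\eta$ and set $t=m+r+\eta/2$: this gives a ball $B^\circ(w_t,m)$ of integer radius containing $D_0$, at distance $m+r+\eta/2$ from $b$, with $\eta/2$ of slack in both directions.

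Finally use density of $S$ to pick $s\in S$ with $\|s-w_t\|<\eta/4$. The triangle inequality gives $\|s-b\|>m+r$, so $B^\circ(s,m)\cap B=\emptyset$. Moreover, for $n\ge N_0$,
\[
\|x\ton-s\|\le \|x\ton-z_0\|+\|z_0-w_t\|+\|w_t-s\|<\rho+(t-\|z_0-b\|)+\tfrac{\eta}{4}=m-\tfrac{\eta}{4},
\]
using $\|z_0-b\|=\rho+r+\eta$ and $t=m+r+\eta/2$, and hence $x\ton\in B^\circ(s,m)$. Setting $D=B^\circ(s,m)$ completes the proof. I do not expect any serious obstacle: all the substantive work is already in Lemma~\ref{l:balls}, and the only delicate point is to carry enough slack (via $\eta$) through both adjustments---rounding the radius up to an integer and moving the centre into $S$---so that neither the disjointness from $B$ nor the eventual containment of the sequence is lost.
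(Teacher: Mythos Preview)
Your proof is correct and follows essentially the same approach as the paper: apply Lemma~\ref{l:balls}, slide the centre outward along the ray from the centre of $B$ through the centre of the resulting ball to make room for an integer radius, then perturb into $S$ using the remaining slack. The only cosmetic difference is that the paper first translates so that $B$ is centred at the origin, whereas you work with $B=B(b,r)$ directly; your bookkeeping of the slack $\eta$ is otherwise identical in spirit to the paper's $\eps$.
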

\begin{proof}
  Suppose that $B=B(w,r_1)$. By translating everything by $-w$ and
  replacing $S$ by $S-w$ we may assume $w=0$: i.e., $B=B(0,r_1)$. Let
  $D'$ be the disjoint ball as guaranteed by Lemma~\ref{l:balls}. We
  show that there is a ball of the required form containing $D'$.

  Suppose $D'=B^\circ(y,r_2)$ for some point $y\in X$ and $r_2>0$. Since $B$
  and $\bar D'$ are disjoint we have $r_1+r_2<\|y\|$. Let
  $\eps=\|y\|-r_1-r_2$. 

  Pick $\lambda>r_2$ of the form $\lambda=m-\eps/2$ for some
  $m\in \N$, and let $y'=y+(\lambda-r_2) y/\|y\|$. Then
  $\|y'\|=\|y\|+\lambda-r_2=r_1+m+\eps/2$. Moreover,
  $B(y',\lambda)\supset D'$.

  Now pick $s\in S$ with $\|s-y'\|<\eps/2$, which we can do because
  $S$ is dense.  Then $B^\circ(s,m)\supset B(y',\lambda)\supset D'$ and,
  since $\|s\|> \|y'\|-\eps/2=r_1+m$, the ball $B^\circ(s,m)$ is disjoint from
  $B$.
\end{proof}

Our next step is to use this to show that applying $T$ to a weakly
converging sequence does not increase distances by `too much'. This is
analogous to Lemma~17 of~\cite{MR3841851}, but whereas the proof there
(for sequences converging in norm) was trivial, here we need to use the
previous lemma as we only have weak convergence.
\begin{lemma}\label{l:seq-conv-banach}
  Let $X$ be a reflexive Banach space, $S$ a dense set in $X$. Suppose $T$ is a step-isometry on $\cds$, that $(x\ton )$ is a
  sequence in $\cds$ weakly converging to $x$, and that $T(x\ton )$ weakly converges to
  $x'$. Then, for any $y\in \cds$ and $m\in \N$ which satisfy $\|x-y\|<m$
  we have $\|x'-T(y)\|\le m$.
\end{lemma}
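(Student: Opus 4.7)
The plan is to argue by contradiction: suppose instead that $\|x'-T(y)\|>m$, so that $x'$ lies outside the closed ball $B(T(y),m)$. I would then apply Lemma~\ref{l:nice-our-case} to the weakly convergent sequence $T(x\ton)\wto x'$ with this closed ball, obtaining some $s\in\cds$, some $k\in\N$, and some $N\in\N$ such that the open ball $B^\circ(s,k)$ is disjoint from $B(T(y),m)$ and contains $T(x\ton)$ for all $n\ge N$.

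I would next transport this information back through $T^{-1}$ using the step-isometry condition at integer radius $k$. Because $T$ bijects $\cds$, the preimage $T^{-1}(s)$ lies in $\cds$, and the equivalence $\|T(x\ton)-s\|<k \Leftrightarrow \|x\ton-T^{-1}(s)\|<k$ places each $x\ton$ (for $n\ge N$) inside the closed ball $B(T^{-1}(s),k)$. Since norm-closed convex sets in a Banach space are weakly closed (Mazur's theorem, via Hahn--Banach), this ball is weakly closed and therefore contains the weak limit of $(x\ton)$, giving $\|x-T^{-1}(s)\|\le k$.

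On the other hand, the disjointness of $B(T(y),m)$ from $B^\circ(s,k)$ forces $\|T(y)-s\|\ge m+k$: otherwise the point on the segment from $T(y)$ to $s$ at distance exactly $m$ from $T(y)$ would lie strictly within $k$ of $s$ and hence in both balls. A further application of the step-isometry property at integer radius $m+k$ upgrades this to $\|y-T^{-1}(s)\|\ge m+k$. Combined with the hypothesis $\|x-y\|<m$ and the triangle inequality, this produces the contradiction
\[
m+k \;\le\; \|y-T^{-1}(s)\| \;\le\; \|y-x\| + \|x-T^{-1}(s)\| \;<\; m+k.
\]
The only conceptually non-routine step is the passage from uniform membership $T(x\ton)\in B^\circ(s,k)$ to membership of the weak limit $x$ in the corresponding ball under $T^{-1}$; once Lemma~\ref{l:nice-our-case} has supplied a separating ball of integer radius centred in $\cds$, the rest is just the step-isometry identity and the triangle inequality.
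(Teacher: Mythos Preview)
Your proof is correct and follows essentially the same route as the paper's: both argue by contradiction, invoke Lemma~\ref{l:nice-our-case} to separate $x'$ from $B(T(y),m)$ by an open ball $B^\circ(s,k)$ with $s\in S$ and $k\in\N$, pull back through the step-isometry to get $\|x\ton-T^{-1}(s)\|<k$ and $\|y-T^{-1}(s)\|\ge m+k$, use weak closedness of norm-closed balls to pass to the limit, and finish with the triangle inequality. The only differences are cosmetic (you write $s,k$ where the paper writes $s',m'$) and that you spell out the appeal to Mazur's theorem and the disjointness-to-distance step where the paper leaves these implicit.
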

\begin{proof}
  Suppose for a contradiction that $\|x'-T(y)\|>m$. Let $B$ be the
  ball $B(T(y),m)$, so $x'\not \in B$. By Lemma~\ref{l:nice-our-case}
  we can find $s'\in S$, $m'\in \N$ and $N\in \N$ such that $B^\circ(s',m')$
  is disjoint from $B(T(y),m)$ and $T(x\ton)\in B^\circ(s',m')$ for
  all $n\ge N$. The disjointness condition shows that
  $\|T(y)-s'\|\ge m+m'$ and the second condition is exactly the
  condition $\|s'-T(x\ton)\|< m'$ for all $n\ge N$.

  Let $s=T^{-1}(s')$. Since $T$ is a step-isometry on $S$ we have
  $\|y-s\|\ge m'+m$, and  $\|s-x\ton\|<m'$ for all $n\ge N$.
  Since $x\ton\wto x$ the second of these bounds implies that
  $\|s-x\|\le m'$. This together with the first of these bounds and the
  triangle inequality then shows that $\|x-y\|\ge m$ which is the
  required contradiction.
\end{proof}

Next we prove Proposition~\ref{p:seq-conv-banach} which, since we now
have Lemma~\ref{l:seq-conv-banach}, is essentially the same as the
proof of Lemma~18 of~\cite{MR3841851}.
 \begin{proof}[Proof of Proposition~\ref{p:seq-conv-banach}]
   Suppose we have $x\ton,y\ton$, $x$, $y$, $x',y'$ and $m$ as in the
   proposition, and that $\|x-y\|<m$. Since $S$ is dense we can pick
   $s,t\in S$ with $\|x-s\|<1$, $\|s-t\|<m-2$ and $\|t-y\|<1$. By
   Lemma~\ref{l:seq-conv-banach} $\|x'-T(s)\|\le 1$ and
   $\|T(t)-y'\|\le 1$, and since $T$ is a step isometry on $S$ we have
   $\|T(s)-T(t)\|<m-2$. Hence, by the triangle inequality
   $\|x'-y'\|<m$.

   The reverse implication follows by applying the same argument to
   $T^{-1}$ which is also a step-isometry on $S$.

   Finally, if $y\in S$ then, by taking the sequence $y\ton$ to be the
   constant sequence $y\ton=y$, the last part of Proposition~\ref{p:seq-conv-banach} follows. 
 \end{proof}

 We can now prove that weak limits, if they exist, are unique. This is
 analogous to Lemma~19 of~\cite{MR3841851}.
 \begin{lemma}\label{l:injective}
  Let $X$ be a reflexive Banach space, $S$ be a dense set in $X$, and  $T$ be a step-isometry on $\cds$. Suppose that $(x\ton ),(y\ton )$ are
  two sequences in $\cds$ both weakly converging to some $x\in X$, and that $T(x\ton )$ and $T(y\ton )$
  weakly converge to $x'$ and $y'$, respectively. Then $x'=y'$.
\end{lemma}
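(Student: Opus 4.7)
The plan is to exploit the last part of Proposition~\ref{p:seq-conv-banach} applied to both sequences separately, and then use that $T$ is a bijection of $S$. Since $(x\ton)$ is a sequence in $S$ weakly converging to $x$ with $T(x\ton)\wto x'$, Proposition~\ref{p:seq-conv-banach} gives, for every $z\in S$ and every integer $m\ge 3$,
\[
\|x-z\|<m\iff \|x'-T(z)\|<m.
\]
Applying the same proposition to $(y\ton)$, which also weakly converges to $x$ (the same limit) and whose image weakly converges to $y'$, yields
\[
\|x-z\|<m\iff \|y'-T(z)\|<m.
\]
Combining these two equivalences, for every $z\in S$ and every integer $m\ge 3$ we have $\|x'-T(z)\|<m$ if and only if $\|y'-T(z)\|<m$.

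Next, I would invoke the fact that $T$ is a bijection of $S$, so $\{T(z):z\in S\}=S$, which is dense in $X$. The equivalence above therefore says that $B^\circ(x',m)\cap S=B^\circ(y',m)\cap S$ for every integer $m\ge 3$. Since $S$ is norm-dense in $X$, each of these sets is dense in the corresponding open ball, and taking closures gives $\bar B(x',m)=\bar B(y',m)$ for every integer $m\ge 3$.

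To finish, I would use the elementary fact that in a normed space the centre of a closed ball of positive radius is uniquely determined: if $\bar B(a,m)=\bar B(b,m)$ with $a\ne b$, then taking $u=(a-b)/\|a-b\|$ we have $a+mu\in\bar B(a,m)=\bar B(b,m)$, whence $\|a-b\|+m=\|a-b+mu\|\le m$, a contradiction. Applied with $m=3$ this immediately gives $x'=y'$.

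The argument is short, and the only step that requires any genuine content (beyond unpacking definitions) is the passage from the integer-radius membership statement to equality of the closed balls, which is where the density of $S$, combined with the routine identity $\overline{B^\circ(c,m)}=\bar B(c,m)$, is used. Everything else is an immediate consequence of Proposition~\ref{p:seq-conv-banach} and the bijectivity of $T$ on $S$.
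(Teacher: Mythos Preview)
Your proof is correct and rests on the same ingredient as the paper's: the last part of Proposition~\ref{p:seq-conv-banach} applied to each of the two sequences, together with the bijectivity of $T$ on $S$ and the density of $S$. The paper's argument is a more direct contradiction: assuming $x'\ne y'$, it notes that $\{v:\|x'-v\|<3,\ \|y'-v\|>3\}$ is open and non-empty, picks $z'\in S$ in this set, sets $z=T^{-1}(z')$, and then Proposition~\ref{p:seq-conv-banach} forces both $\|x-z\|<3$ and $\|x-z\|\ge 3$. Your route instead deduces $B^\circ(x',m)\cap S=B^\circ(y',m)\cap S$, passes to closures, and invokes uniqueness of centres---a slightly longer packaging of the same idea, but perfectly valid; the only extra step you need (and which you handle) is the routine fact $\overline{B^\circ(c,m)}=\bar B(c,m)$ in a normed space.
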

\begin{proof}
  Suppose that $x'\not =y'$. Then the set
  \[
  \{v\in X: \|x'-v\|<3 \text{ and }\|y'-v\|>3\}
  \]
  is open and non-empty. Since $\cds$ is dense in $X$, there exists
  $z'\in \cds$ with $\|x'-z'\|<3$ and $\|y'-z'\|>3$. Let
  $z=T^{-1}(z')$. Then, the last part of Proposition~\ref{p:seq-conv-banach} applied to
  the sequence $(x\ton )$ and the element $z\in S$, and applied to the sequence $(y\ton )$ and
  $z$ implies $\|x-z\|<3$ and $\|x-z\|\ge 3$, which is a contradiction.
\end{proof}
Finally we use the sequential compactness of the unit ball in the
weak topology to show that whenever a sequence $x\ton$ weakly
converges then the sequence $T(x\ton)$ also weakly converges. This is
analogous to Lemma~20 of~\cite{MR3841851} but where the proof there used that the
unit ball in a finite-dimensional space is (sequentially) compact in the norm
topology, here we are using that the unit ball in a reflexive space is
sequentially compact in the weak topology.
\begin{lemma}\label{l:convergence-banach}
  Let $X$ be a reflexive Banach space, $S$ be a dense set in $X$, and  $T$ be a step-isometry on $\cds$. Suppose that $(x\ton )$ is a sequence in $S$ that weakly
  converges in $X$. Then $T(x\ton )$ is a weakly converging sequence.
\end{lemma}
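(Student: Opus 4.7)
The plan is to combine boundedness of $(T(x\ton))$ with the sequential weak compactness of bounded sets in reflexive spaces (Eberlein--\v{S}mulian), and then rule out multiple weak cluster points by appealing to Lemma~\ref{l:injective}.

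First I would establish that $(T(x\ton))$ is bounded in norm. Since $(x\ton)$ is weakly convergent, it is bounded in norm by the uniform boundedness principle, say $\|x\ton\|\le C$ for every $n$. Fix any $s\in S$ and pick an integer $m$ with $m>C+\|s\|$, so that $\|x\ton-s\|<m$ for all $n$. Since $T$ is a step-isometry on $S$, Lemma~\ref{l:integer-distances} (or just the definition of step-isometry) gives $\|T(x\ton)-T(s)\|<m$ for every $n$, and hence $\|T(x\ton)\|\le \|T(s)\|+m$.

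Next, by Eberlein--\v{S}mulian applied to the reflexive space $X$, the bounded sequence $(T(x\ton))$ has a weakly convergent subsequence, and indeed every subsequence does. To promote this to weak convergence of the full sequence, suppose for a contradiction that $(T(x\ton))$ does not converge weakly. Then there exists $f\in X^*$ such that $(f(T(x\ton)))$ does not converge in $\R$. This is a bounded scalar sequence, so it admits two convergent subsequences $(f(T(x\toni)))\to a$ and $(f(T(x^{(m_j)})))\to b$ with $a\neq b$. By Eberlein--\v{S}mulian we may refine these further so that $T(x\toni)\wto x'$ and $T(x^{(m_j)})\wto y'$ for some $x',y'\in X$. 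The subsequences $(x\toni)$ and $(x^{(m_j)})$ still weakly converge to $x$, so Lemma~\ref{l:injective} forces $x'=y'$. But then $a=f(x')=f(y')=b$, a contradiction.

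The only real obstacle is the first step: a step-isometry is not a priori Lipschitz or even continuous in general normed spaces, so one cannot directly transfer the norm bound $\|x\ton\|\le C$ to a norm bound on $T(x\ton)$. The workaround is to replace the bound $\|x\ton\|\le C$ by an \emph{integer} bound $\|x\ton-s\|<m$ against a single reference point $s\in S$, which the step-isometry property preserves. After this observation, reflexivity supplies the weakly convergent subsequences and Lemma~\ref{l:injective} supplies their uniqueness, completing the proof.
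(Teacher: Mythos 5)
Your proposal is correct and follows essentially the same route as the paper: bound $\|T(x\ton)\|$ by comparing against a fixed reference point of $S$ via the step-isometry property, extract a weakly convergent subsequence by Eberlein--\v{S}mulian, and use Lemma~\ref{l:injective} to rule out a second weak cluster point. The only cosmetic difference is that you detect failure of weak convergence through a functional $f$ with a non-convergent scalar sequence (which is justified here since a weakly convergent subsequence already exists), whereas the paper phrases it as a subsequence avoiding a weakly open neighbourhood of the limit.
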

\begin{proof}
  Since $(x\ton )$ is weakly convergent, it is bounded. Thus we can
  pick $c\in \N$ such that for all $n$ we have
  $\|x\ton-x^{(1)}\|<c$. Hence, since $T$ is a step-isometry, 
  $\|T(x\ton )-T(x^{(1)})\|<c$ for all $n$; in particular,
  $T(x\ton )$ is a bounded sequence. Thus, since the unit ball in $X$
  is sequentially compact in the weak topology, there is a subsequence
  $(x^{({n_i})})$ such that $T(x^{({n_i})})$ weakly converges to some
  value $x'$, say.

  Suppose that $T(x\ton )$ does not weakly converge to $x'$. Then
  there exists a subsequence which misses some weakly open set
  containing $x'$.  As above we can take a further subsequence which
  weakly converges to $x''$ say. Since this subsequence misses a
  weakly open set containing $x'$, we must have $x''\not=x'$. But this
  contradicts Lemma~\ref{l:injective}.
\end{proof}
Finally, we put these results together to define the extension
$\bar T$.
\begin{proof}[Proof of Theorem~\ref{t:step-isos-extend}]
  We define $\bar T$ in the obvious way: for any $x\in X$ pick a
  sequence $x\ton\wto x$ and define $\bar T(x)$ to be the weak limit
  of $T(x\ton)$. By Lemma~\ref{l:convergence-banach} the sequence
  $T(x\ton)$ does weakly converge, and by Lemma~\ref{l:injective} the
  resulting function is well-defined (any sequence weakly converging
  to $x$ gives the same limit).

  By Proposition~\ref{p:seq-conv-banach} for any $m\in \N$ with $m\ge 3$ we have
  $\|x-y\|<m$ if and only if $\|\bar T(x)-\bar T(y)\|<m$, which by
  Lemma~\ref{l:m-step-isometry-vector-space} shows that $\bar T$ is a step-isometry.
\end{proof}

\section{Conditions that ensure step-isometries are isometries}\label{s:step=isom}
In this section we prove Theorem~\ref{t:strict-convex-step-iso-is-iso}.
The following lemma is implicit in sections~5 and~6 of~\cite{MR3841851} .
\begin{lemma}\label{l:bbglw-s5s6}\cite{MR3841851}
  Suppose that $X$ is a normed space and that $T$ is a 
  step-isometry on $X$ that fixes $0$ and preserves integer
  distances. Let \[\Lambda=\ \left\{\sum_i\lambda_i x_i: \lambda_i\in \Z \text{\ and\ }
  x_i\in \ext(B)\right\}\] where $Ext(B)$ denotes the set of extreme points of $B$. Then $T|_{\Lambda}$ maps $\Lambda$ onto itself and is an isometry.
  Further, if $T$ is continuous then $T|_{\bar \Lambda}$ maps $\bar\Lambda$ onto itself and is an isometry.
\end{lemma}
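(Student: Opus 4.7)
My approach is to prove this in three stages. Throughout I use the hypothesis that $T$ preserves integer distances and fixes $0$, so both $T$ and $T^{-1}$ biject each closed ball $B(0,n)$ with itself. The key geometric fact I will rely on is the following characterization of extreme points: if $x \in \ext(B)$, then the only $p \in X$ satisfying $\|p\| = \|p-2x\| = 1$ is $p = x$, because $\|p\| + \|2x-p\| = 2 = \|2x\|$ exhibits $x$ as the midpoint of the two norm-one points $p$ and $2x-p$ (both in $B$), which extremality forces to coincide.

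In the first stage, for $x \in \ext(B)$ I would prove $T(nx) = nT(x)$ for every $n \in \Z$, by induction. For $n = 2$, set $u = T(x)$ and $v = T(2x)$; then $v/2$ has norm $1$ and is at distance $1$ from $v$, so $T^{-1}(v/2)$ has norm $1$ and distance $1$ from $2x$, which by the characterization forces $T^{-1}(v/2) = x$ and hence $v = 2u$. The induction step applies the same reasoning to the translated step-isometry $S_k(y) := T(y + kx) - T(kx)$, which also fixes $0$ and preserves integer distances. A parallel argument rules out a decomposition $T(x) = (a+b)/2$ with distinct $a, b \in B$: necessarily $\|a\| = \|b\| = 1$ and $a + b = T(2x)$, so $\|a - T(2x)\| = \|b\| = 1$, whence $T^{-1}(a)$ and $T^{-1}(b)$ both satisfy the characterization at $x$, forcing $a = b$. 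Thus $T$ maps $\ext(B)$ bijectively to $\ext(B)$.

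In the second stage, for $x \in \ext(B)$ and arbitrary $w \in X$ I show $T(x + w) = T(x) + T(w)$. Applying the first stage to the translated step-isometry $R_w(y) := T(y + w) - T(w)$ at extreme $x$ yields $T(nx + w) - T(w) = n\bigl(T(x + w) - T(w)\bigr)$, and combining with $T(nx) = nT(x)$ gives
\[
T(nx + w) - T(nx) = T(w) + n\bigl(T(x + w) - T(x) - T(w)\bigr).
\]
Since $\lfloor \|T(nx + w) - T(nx)\|\rfloor = \lfloor \|w\|\rfloor$ is bounded in $n$, the error term in parentheses must vanish. Iterating shows $T|_\Lambda$ is $\Z$-linear with image in $\Lambda$, and the same argument applied to $T^{-1}$ gives the reverse inclusion. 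Norm preservation on $\Lambda$ then follows from $\lfloor n\|T(w)\|\rfloor = \lfloor n\|w\|\rfloor$ for every $n \in \N$ (dividing by $n$ and letting $n \to \infty$), so $T|_\Lambda$ is an isometry onto $\Lambda$.

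Finally, when $T$ is continuous, $T|_\Lambda$ is a continuous isometry and $\Lambda$ is norm-dense in $\bar\Lambda$, so it extends uniquely to an isometric bijection on $\bar\Lambda$, with surjectivity coming from the analogous extension of $T^{-1}$. The main obstacle I anticipate is the bootstrapping in the second stage: amplifying any additivity failure along one extreme ray into an unbounded error that contradicts the step-isometry property is the crux of the argument, and it is what allows the rigidity established on individual rays in the first stage to propagate to all of $\Lambda$.
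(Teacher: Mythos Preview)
The present paper does not give its own proof of this lemma: it quotes the result from \cite{MR3841851}, remarking only that it is implicit in Sections~5 and~6 there and that continuity is used solely to pass from $\Lambda$ to $\bar\Lambda$. So there is no in-paper argument to compare your attempt against.

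That said, your proposal is correct and self-contained. The extreme-point characterization you invoke (the only $p$ with $\|p\|=\|p-2x\|=1$ is $p=x$) is exactly right; the amplification device in Stage~2, where any failure of additivity along an extreme direction is scaled by $n$ in the identity
\[
T(nx+w)-T(nx)=T(w)+n\bigl(T(x+w)-T(x)-T(w)\bigr)
\]
and then annihilated by the uniform bound $\lfloor\|T(nx+w)-T(nx)\|\rfloor=\lfloor\|w\|\rfloor$, is the heart of the matter; and the fact that you use continuity only in Stage~3 matches the paper's remark about how continuity enters in the cited source. Two minor points worth making explicit: in Stage~1 the case of negative $n$ goes through because $-x\in\ext(B)$ by symmetry of $B$ (or via your translates $S_k$); and in Stage~3 no genuine ``extension'' is needed, since $T$ is already defined on all of $X$ --- continuity of $T$ together with density of $\Lambda$ in $\bar\Lambda$ directly transfers both the isometry property and the invariance $T(\Lambda)=\Lambda$ to the closure.
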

Note that, by Lemma~\ref{l:integer-distances} the condition that $T$
preserves integer distances is automatically satisfied. We have
included it in the statement as it is included in the (implicit)
statement of Lemma~\ref{l:bbglw-s5s6} in~\cite{MR3841851}. We also remark that whilst the continuity of $T$ is assumed throughout Sections 5 and 6 of~\cite{MR3841851}, it is only used to show that $T|_{\bar\Lambda}$ being an isometry follows from $T|_{\Lambda}$ being an isometry.

Finally note that $\Lambda$ is not typically discrete and, indeed, in all our applications $\Lambda$ will be dense in the whole of $X$ and, in many cases, will actually be the whole of $X$. The following lemma gives some sufficient conditions for these to occur. 
 \begin{lemma}\label{l:lambda-dense}
   Let $X$ be a Banach space of dimension at least~2 with the property
   that the extreme points of its unit ball are dense in the unit
   sphere. Then the set $\Lambda$ is dense
   in $X$. Further, if $X$ is strictly convex then $\Lambda$ is the whole of $X$.
 \end{lemma}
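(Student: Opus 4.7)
My plan is to reduce both parts of the lemma to a single decomposition: for every $x\in X$ there exist a positive integer $N$ and vectors $u,v$ of norm $1$ with $x=Nu+Nv$. Once this is available, the strictly convex case is immediate, because then every unit vector is extreme, so $u,v\in\ext(B)$ and $x\in\Lambda$; the density conclusion will follow by replacing $u,v$ with nearby extreme points.

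To produce the decomposition I would fix $x\in X$, choose an integer $N\ge 1$ with $2N\ge\|x\|$, and examine the continuous function $\phi(u)=\|x-Nu\|$ on the unit sphere of $X$. By the triangle inequality, $\phi$ takes values in $[\,|N-\|x\||,\,N+\|x\|\,]$, and both endpoints are attained (at $\pm x/\|x\|$, when $x\ne 0$). The hypothesis $\dim X\ge 2$ makes the unit sphere path-connected, so the intermediate value theorem supplies some $u$ with $\phi(u)=N$; the inequality $2N\ge\|x\|$ is exactly what places $N$ in this interval. Setting $v=(x-Nu)/N$ then gives a unit vector $v$ with $x=Nu+Nv$.

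For the density statement, given $\eps>0$, I would use the hypothesis that $\ext(B)$ is dense in the unit sphere to pick $e_1,e_2\in\ext(B)$ with $\|e_1-u\|,\|e_2-v\|<\eps/(2N)$. Then $Ne_1+Ne_2\in\Lambda$ and $\|x-(Ne_1+Ne_2)\|\le N\|u-e_1\|+N\|v-e_2\|<\eps$, so $\Lambda$ is dense in $X$. In the strictly convex case, $u$ and $v$ are themselves extreme, so we may take $e_1=u$ and $e_2=v$ and obtain $x=Nu+Nv\in\Lambda$ exactly.

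The only genuine use of the hypothesis $\dim X\ge 2$ is the path-connectedness of the unit sphere needed for the intermediate value argument, and I do not anticipate any further obstacle. Note that in dimension one the unit sphere consists of just two points and $\Lambda=\Z$, so the hypothesis cannot be dropped.
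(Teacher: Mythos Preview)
Your proof is correct and follows essentially the same approach as the paper's: both use the (path-)connectedness of the unit sphere together with an intermediate value argument to write $x$ as $Nu+Nv$ for unit vectors $u,v$, and then approximate $u,v$ by extreme points. The only presentational difference is that the paper first reduces to showing that any point of the open unit ball is a sum of two unit vectors (leaving the scaling by $N$ implicit), whereas you carry the integer $N$ through explicitly; the underlying idea is identical.
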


 \begin{proof}
   Since the extreme points are dense in the unit sphere, it suffices
   to show that any point in the unit ball can be written as the sum
   of two unit vectors. This is trivial for the zero vector, so let
   $x\not =0$ be a vector with $\|x\|<1$.

  The vector $x/\|x\|$ is in the unit sphere and has distance less
  than one from $x$; in contrast the vector $-x/\|x\|$ is in the unit
  sphere and has distance greater than one from $x$. Hence, since the
  unit sphere is connected (since the dimension is at least $2$), there
  is a point $y$ in the unit sphere at distance exactly $1$ from $x$. We
  see that $x=y+(x-y)$ is the required sum.
  When $X$ is strictly
    convex, all points of the unit sphere are extreme points, hence the second statement follows.
\end{proof}

\begin{proof}[Proof of Theorem~\ref{t:strict-convex-step-iso-is-iso}]
    This is immediate from the results we have.
    Theorem~\ref{t:step-isos-extend} shows that $T$ extends to a
    step-isometry on the whole of $X$. Since $X$ is strictly
    convex, all points of the unit sphere are extreme points, so
    Lemma~\ref{l:lambda-dense} combined with Lemma~\ref{l:bbglw-s5s6}
    (using the fact that Lemma~\ref{l:integer-distances} shows that
    $T$ preserves integer distances) show that $T$ is an isometry on
    $X$. Since $T$ maps $S$ to itself, it follows that $T$ is an
    isometry on $S$.
\end{proof}
We remark that this proof actually shows that Theorem~\ref{t:step-isos-extend} holds under the weaker hypothesis that the extreme points of the unit ball are dense in the unit sphere, or the even weaker condition that the lattice $\Lambda$ generated by the extreme points of the unit ball is dense in the whole space. However, for simplicity we have stated Theorem~\ref{t:step-isos-extend} just for strictly convex spaces.

\section{A Rado set in any space with a 1-symmetric
  basis.}\label{s:ellp-rado-set} In this section we prove
Theorem~\ref{t:exist-rado}. Recall that a basis $(e_i)$ in a Banach
space is \emph{$1$-symmetric} if, for any permutation $\pi$ of $\N$ and any
sequence $a_i$ of real numbers such that $\sum_ia_ie_i$ converges, the
sum $\sum_ia_{\pi(i)}e_i$ also converges and
$\|\sum_ia_ie_i\|=\|\sum_ia_{\pi(i)}e_i\|$. Trivially, in any $\ell_p$
space, the standard basis is a $1$-symmetric basis.

We remark that we first constructed a Rado set in $\ell_2$ and then
observed that the only property of a Hilbert space we were using was
that it had a $1$-symmetric basis. A reader unfamiliar with symmetric
bases will lose very little by restricting everything to the case of
the standard basis in $\ell_2$.

Let us now fix a Banach space $X$ with a $1$-symmetric basis $(e_i)$. We construct a Rado set
$S$ in $X$.

\subsection*{Construction}
We are going to construct a
countable dense set $S=\{s^{(1)},s^{(2)},\dots\}$ such that $\supp(s^{(i)})\subseteq [i]$
and $s^{(i)}_i\not =0$. In particular the $s^{(i)}$ will be linearly
independent. We start by letting
\[Q_n=\{\text{$x\in \Q:$  $|x|\le n$, $x=a/b$, $a,b\in \N$,
$b\le n$}\},\] and $Q_n'=Q_n\setminus\{0\}$.  Let
\[U_n=\left\{x\in X: \text{$x_i\in Q_n$ for $1\le i< n$ and $x_i=0$
      for $i\ge n$} \right\}.
\]
We are going to define sets $S_n$ inductively based on all of the
$S_i$ for $i<n$ together with $U_n$. We will then define $S=\bigcup_n
S_n$. Let $S_1=\{e_1\}$. Suppose that, we have defined $S_1,S_2,\dots,
S_{n-1}$, and that $\bigcup_{i\le n-1}S_i=\{s^{(1)},s^{(2)},\ldots ,
s^{(N)}\}$. Note that, provided we add at least one point at each
stage, we will have $N\ge n-1$ and in particular $U_n\subset
\left<e_1,e_2,\dots,e_N\right>$.  Enumerate the pairs of $U_n\times
Q'_n$ as $(u_1,q_1),(u_2,q_2),\dots, (u_m,q_m)$. Let
$s^{(N+i)}=u_i+q_ie_{N+i}$ and $S_n=\{s^{(N+i)}: 1\le i\le m\}$.

Obviously $S$ is countable, $\supp(s^{(i)})\subseteq [i]$, and
$s^{(i)}_i\not = 0$. Finally, $S$ is dense in $c_{00}$: indeed, for any
point $x\in c_{00}$ with rational coordinates and any $n$ we can find $k$ such that the point
$x'=x+e_k/n\in S$. Since $c_{00}$ is dense in $\ell_p$ this implies
that $S$ is dense in $\ell_p$.

\subsection*{The set is Rado}
Now we have to show that this set $S$ is a Rado set. Suppose that we
have instances $G$ and $G'$ of $\mathcal{G}(S,p)$. As usual we use a
back-and-forth argument.

At each step of the argument we will have a partial permutation
$\sigma$ on $\N$: that is $\sigma$ is a bijection between two finite
subsets $I,J\subset \N$. Note these will \emph{not} typically be
initial segments of $\N$. We will insist that $\sigma$ has the
following properties:
\begin{enumerate}
  \item The set $S_I=\{s\in S: \supp(s)\subset I\}$ spans $\R^I$ (so
    is necessarily a basis) and similarly for $J$.\label{con:span}
  \item The induced map $\hat \sigma:\left<e_i:i\in I\right>\to
    \left<e_j:j\in J\right>$ defined by $\hat\sigma(e_i)=e_{\sigma(i)}$
    maps $S_I$ to~$S_J$.\label{con:consistent}
  \item The map $\hat\sigma:S_I\to S_J$ is an isomorphism from $G|_{S_I}$ to
  $G'|_{S_{J}}$.\label{con:graph-iso} 
\end{enumerate}
Since the basis is $1$-symmetric, the map $\hat\sigma$ is an isometry,
so at each stage $S_I$ and $S_J$ are isometric.

Let $i=\min(\N\setminus I)$. We want to extend the partial permutation
to $I'=I\cup\{i\}$. Observe that Property~\ref{con:span} does hold for
$S_{I'}$ since $\supp(s^{(i)})\subseteq [i]\subseteq I'$ implies that
$s^{(i)}\in S_{I'}$ and $s^{(i)}_i\not=0$ implies that $s^{(i)}$ is
linearly independent of $S_I$.

As $\sigma$ is already defined on $I$, in order to define it on $I'$
we just have to decide where to send $i$. We need to find $j\not \in
J$ such that extending $\sigma$ by defining $\sigma(i)=j$ maintains
the three conditions above. Let $J'=J\cup \{j\}$. To ensure
Property~\ref{con:consistent} holds we need $\hat\sigma(s^{(i)})\in
S_{J'}$. But $\hat\sigma(s^{(i)})=\hat\sigma(s^{(i)}|_I)+s^{(i)}_ie_j$.
Since $\hat\sigma(s^{(i)}|_I)\in U_n$ for some $n$, by construction
there are infinitely many values of $j$ for which $\hat\sigma(s^{(i)})\in
S_{J'}$, so almost surely one
of them will also satisfy Property~\ref{con:graph-iso}, which
completes this step. This choice of $J'$ will automatically satisfy
Property~\ref{con:span}.

We repeat this in the standard back-and-forth method, and we see that
$\sigma$ extends to be a bijection on the whole of $\N$ inducing the
required graph isomorphism.

\section{A Banach space where all sets are strongly non-Rado}\label{s:no-rado}

In this section we put together our earlier results with general
results from Banach space theory to prove
Theorem~\ref{t:exist-no-rado} showing that there are infinite
dimensional Banach spaces where \emph{all} countable dense sets are strongly
non-Rado.

Suppose that we can find a space $X$ which is separable, reflexive, strictly
convex and has no non-trivial isometries (an isometry is trivial if it
is $\pm I$). Then, for any countable dense set $S$ of $X$,
Theorem~\ref{t:strict-convex-step-iso-is-iso} combined with
Lemma~\ref{l:countable=not-isomorphic} shows that $S$ is strongly
non-Rado.

Many spaces with no non-trivial isometries have been constructed but
we have not been able to find an example which has all the properties
we need.  However, it is simple to modify a construction of Davis~\cite{MR0298393}
to give a space, which is separable, reflexive and has no non-trivial
isometries and, whilst it is not strictly convex, it is `close enough' to
strictly convex that Lemma~\ref{l:bbglw-s5s6}
shows that any step-isometry is an isometry.

\begin{lemma}\label{l:exist-X}There exists a Banach space $X$ that is separable,
  reflexive, has no non-trivial isometries, and has the property that
  the lattice $\Lambda$ generated by the extreme points of its unit
  ball is the whole space.
\end{lemma}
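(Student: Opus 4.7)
The plan is to construct $X$ by modifying Davis's renorming of $\ell_2$~\cite{MR0298393} so that, in addition to separability, reflexivity, and the absence of non-trivial isometries, the lattice condition also holds. Concretely, I would equip $\ell_2$ with the equivalent norm
\[
\|x\|^2 = \|x\|_2^2 + \sum_{n=1}^\infty \lambda_n \bigl(\max\{0, f_n(x)\}\bigr)^2,
\]
where $(f_n)$ is a carefully chosen sequence in $\ell_2^*$ and $(\lambda_n)$ is a summable sequence of distinct positive reals with $\sum_n \lambda_n \|f_n\|^2 < \infty$. Separability and reflexivity of $X$ are automatic since this is an equivalent renorming of $\ell_2$.

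The first step is to show that the only linear isometries of $X$ are $\pm I$. This is essentially the Davis-style argument: the use of $\max\{0,\cdot\}^2$ (rather than $(\cdot)^2$) makes each summand asymmetric under sign change, and any surjective linear isometry $U$ must permute the countable family of ridges $H_n = \{f_n = 0\}$ compatibly with the weights $\lambda_n$. Choosing the $\lambda_n$ all distinct forces $U$ to fix each $H_n$ setwise, and choosing the $f_n$'s rigidly -- in sufficiently general position that no non-trivial common symmetry survives -- then forces $U = \pm I$.

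The second step, which is the novelty beyond Davis, is to verify that the lattice $\Lambda$ generated by the extreme points of the unit ball $B$ equals all of $X$. The norm is strictly convex on the open dense set $U_0 = \{x : f_n(x) \neq 0 \text{ for all } n\}$, since each $\max\{0, f_n(\cdot)\}^2$ is strictly convex there and $\|\cdot\|_2^2$ is strictly convex everywhere. Thus every boundary point of $B$ lying in $U_0$ is an extreme point, so the extreme points are dense in the unit sphere, and Lemma~\ref{l:lambda-dense} yields that $\Lambda$ is dense in $X$. To upgrade to $\Lambda = X$, I would refine the argument of Lemma~\ref{l:lambda-dense}: for any $x$ in the open unit ball, the set $\{y : \|y\| = 1, \|x - y\| = 1\}$ is non-empty by the connectedness of the unit sphere, and in this infinite-dimensional setting it is in fact a topologically large subset of the sphere. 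Since the non-extreme unit vectors all lie in the meagre countable union $\bigcup_n H_n \cap \partial B$ of codimension-one slices, one can select $y$ in that set with both $y$ and $x - y$ lying in $U_0$, so both are extreme, and hence $x = y + (x-y) \in \Lambda$.

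The main obstacle is balancing the two opposing requirements: the perturbation must be asymmetric enough to rule out every non-trivial isometry, yet mild enough that the non-strictly-convex locus is small enough to permit the decomposition above. The proposed form $\max\{0, f_n(x)\}^2$ is tailored exactly for this balance -- each summand is strictly convex off the hyperplane $H_n$ while still breaking the reflection symmetry in the $f_n$-direction, so strict convexity of $X$ fails only on the meagre set $\bigcup_n H_n \cap \partial B$, while enough rigidity is retained to pin down the isometry group to $\{\pm I\}$.
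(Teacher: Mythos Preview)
There is a concrete gap: your formula does not define a norm. The map $x\mapsto\max\{0,f_n(x)\}$ is positively but not absolutely homogeneous, and this asymmetry survives the sum; with a single functional $f(x)=x_1$ and $\lambda=1$ one gets $\|e_1\|^2=2$ while $\|{-e_1}\|^2=1$. You explicitly rely on this sign-asymmetry to kill isometries, but it simultaneously prevents $\|\cdot\|$ from being a norm at all.

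A second problem lies beneath this. Because $\|\cdot\|_2^2$ is already strictly convex and each added summand is convex, the function $\|\cdot\|^2$ is strictly convex \emph{everywhere}; moreover $t\mapsto\max\{0,t\}^2$ is $C^1$, so the gauge is also smooth. Hence the ``ridges'' along $H_n=\{f_n=0\}$ that you want to use to pin down the isometry group are simply not there, either in the strict-convexity sense or in the smoothness sense (the only distinguished structure is at the $C^2$ level, which is far more delicate to exploit and not what you describe). The flip side is that strict convexity would make $\Lambda=X$ immediate from Lemma~\ref{l:lambda-dense}, so your Baire-category refinement is unnecessary --- but then the mechanism you propose for controlling the isometry group has evaporated.

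For comparison, the paper takes a geometric rather than analytic route: the new unit ball is the closed convex hull of an explicit set $G$ consisting of a band of the $\ell_2$-sphere $\{\|x\|_2=1,\ |x_0|\le\tfrac{1}{10}\}$ together with a discrete family of ``spikes'' $e_0+\tfrac{1}{2n}e_n$ and $e_0-\tfrac{1}{2n+1}e_n$. The extreme points are exactly $G$; the spikes are distinguished from one another by nearest-neighbour distances among extreme points, forcing any linear isometry to be $\pm I$. The lattice condition is obtained not by a genericity argument but by noting that on the hyperplane $\{x_0=0\}$ the induced norm is strictly convex, so Lemma~\ref{l:lambda-dense} gives $\Lambda\supset\{x_0=0\}$, after which the spikes supply the remaining $e_0$-direction.
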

\begin{proof}
  Following Davis we renorm $\ell_2$ such that the only isometries are
  $\pm I$. View $\ell_2$ as having basis $e_0,e_1,\dots$.
  Let 
  \[F=\left\{e_0+\frac{1}{2n}e_n:n\ge 1\right\}\cup \left\{e_0-\frac{1}{2n+1}e_n:n\ge
    1\right\}.\] Define the unit ball $B'$ to be the closed convex hull of
  the set 
  \[G=\left\{x\in \ell_2:\|x\|=1\text{ and }|x_0|\le \frac{1}{10}\right\}\cup F \cup
    -F\] and let $\|\cdot\|'$ denote the corresponding norm. Let $X$ be the space $\ell_2$ with norm $\|\cdot\|'$.

  First we do the trivial check that this is an equivalent norm. Since
  $\|x\|\le 3/2$ for all $x\in F$ we see that $B'\subseteq \frac32 B$.
  Conversely suppose $x\in B$. Then
  $x=\alpha e_0+y$ with 
  $y\cdot e_0=0$ and
  $\alpha^2+\|y\|^2\le 1$. Since $e_0\in B'$ and $y\in B'$
  we see that $x=\alpha e_0+y\in 2B'$. Therefore $B\subseteq 2B'$ completing the proof of the equivalence of  $\|\cdot\|$ and $\|\cdot\|'$. This shows that  $X$ is separable and reflexive.

  Next, let $T$ be any surjective isometry fixing $0$.  We need to
  show that $T=\pm I$. By the Mazur-Ulam
  Theorem $T$ must be linear.  Since $T$ fixes $0$ it must map the
  unit ball to itself, and since it is linear, it must map the extreme
  points of the unit ball $B'$ to themselves. It is easy to see that the
  set of extreme points of the unit ball is exactly $G$.

  The points $\pm(e_0+\frac12e_1)$ are the only points which have no
  extreme points within distance $\frac 12$. Therefore this pair must
  map to this pair. By applying the isometry $-I$ we may assume they
  map to themselves.

  Now of the remaining extreme points $\pm(e_0-\frac13e_1)$ are the
  only ones which have no extreme points within distance a $\frac13$
  so they map as a pair to the same pair.  Since $e_0+\frac12e_1$ is
  fixed we see that $T$ must be the identity on these two
  points. Repeating this shows that $T$ is the identity on all points
  in $F$.  Since $F$ spans all of $X$ this implies the map is the
  identity.

  Finally we need to show that $\Lambda$ is the whole space. Since
  $\{x\in X:\|x\|=1\text{ and }x_0=0\}$ are all extreme points, Lemma~\ref{l:lambda-dense} applied to the subspace $\{x\in X:x_0=0\}$ shows that $\Lambda\supset \{x\in X:x_0=0\}$. Now given any
  $y\in X$ we can find $z\in\Lambda$ with $z_0=y_0$. But then
  $y-z\in \{x\in X:x_0=0\}$ so $y\in z+\Lambda=\Lambda$.
\end{proof}
\begin{proof}[Proof of Theorem~\ref{t:exist-no-rado}] 
Suppose that $S$ is a countable dense subset of the normed space $X$ given by Lemma~\ref{l:exist-X} and that $T$ is a step-isometry on $S$.  Since $X$ is a separable reflexive space, Theorem~\ref{t:step-isos-extend} shows that $T$ extends to a step-isometry $\bar T$ on the whole of $X$. By construction (Lemma~\ref{l:exist-X}) $\Lambda$ is the whole space $X$, so by Lemma~\ref{l:bbglw-s5s6}, $\bar T$ is an isometry on the whole of $X$. Hence, by Lemma~\ref{l:exist-X}, $\bar T-\bar T(0)=\pm I$. Since $\bar T$ must map $S$ to itself, there are only countably many possiblities for $\bar T(0)$, which in turn implies that there are countably many step-isometries of $S$. Finally, Lemma~\ref{l:countable=not-isomorphic} shows that $S$ is strongly non-Rado.
\end{proof}

\section{Open Questions}
Whilst this paper has proved that a large class of reflexive spaces have
the property that most countable dense sets are strongly non-Rado, this is far from
a full classification. Given the complexity even in finite dimensions
we expect a full classification to be complicated. As the key step in our proofs is showing that step-isometries are isometries, we phrase the question in
that form.
\begin{question}
  For which reflexive spaces $X$ are step-isometries
  necessarily isometries?
\end{question}

The main open question, however, is whether typical sets are Rado or
non-Rado in non-reflexive Banach spaces. As mentioned in the
introduction, some specific cases are known ($c_0$, $c$ and
$C(0,1)$). But it is open for all other non-reflexive spaces -- in
particular the following very concrete case is open.

\begin{question}
  Is a typical countable dense set in $\ell_1$ Rado, or strongly non-Rado?
\end{question}

Some of the techniques of this paper may be applicable by moving to
the weak-* topology and working on the double dual $X^{**}$. However,
since the dual of $\ell_1$ is not separable, the weak-* topology on
its double dual is neither metrisable nor sequentially compact, and
moreover, $\ell_1$ is not an Asplund space.

Given this, the case of a non-reflexive space with a separable dual may
be more tractable. Indeed, we conjecture that such spaces behave very much
like the reflexive spaces.
\begin{conjecture}
  Let $X$ be a strictly convex Banach space with separable dual. Then
  `typical' countable dense sets in $X$ are strongly non-Rado. 
\end{conjecture}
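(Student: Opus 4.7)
The overall strategy is to mirror the proof of Theorem~\ref{t:strict-convex-step-iso-is-iso}: given a step-isometry $T$ on a countable dense set $S$ in our strictly convex $X$, I would first extend $T$ to a step-isometry $\bar T$ on the whole of $X$, then apply Lemma~\ref{l:bbglw-s5s6} together with Lemma~\ref{l:lambda-dense} (strict convexity makes every point of the unit sphere an extreme point, so $\Lambda = X$) to conclude that $\bar T$ is an isometry. Taking ``typical'' to include, at a minimum, the generic property that $S$ has no repeated distances, the only isometry of $S$ to itself is then the identity, so that there are only countably many step-isometries on $S$, and Lemma~\ref{l:countable=not-isomorphic} gives strongly non-Rado. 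The content of the conjecture is therefore the extension step in this non-reflexive setting.

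To produce the extension without reflexivity, I would view $X$ canonically inside its bidual $X^{**}$ and work with the weak-$*$ topology there. Banach--Alaoglu gives weak-$*$ compactness of bounded sets, and the separability of $X^*$ provides weak-$*$ metrisability on bounded sets and hence sequential compactness. Given $x \in X$ and $x\ton \in S$ with $x\ton \to x$ in norm, the sequence $T(x\ton)$ is bounded and so has weak-$*$ convergent subsequences in $X^{**}$; the candidate definition of $\bar T(x)$ is such a limit. Asplund's Theorem still applies, since $X$ having separable dual is an Asplund space, so the proofs of Lemma~\ref{l:balls} through Lemma~\ref{l:convergence-banach} should adapt essentially verbatim with weak convergence in $X$ replaced by weak-$*$ convergence in $X^{**}$, giving uniqueness of the limit and an analogue of Proposition~\ref{p:seq-conv-banach}.

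The central obstacle, and almost certainly the reason this is stated only as a conjecture, is showing that the weak-$*$ limit actually lies in $X \subset X^{**}$ rather than in $X^{**} \setminus X$. Section~\ref{s:not-extend} shows that without strict convexity this can fail even though $c_0$ has separable dual; the exact role of strict convexity here is therefore the heart of the matter. My plan would be to exploit strict convexity to show that a putative limit $\bar T(x) \in X^{**} \setminus X$ is incompatible with $T$ being a step-isometry on $S$: roughly, one tries to show that the Asplund-style ball construction of Lemma~\ref{l:nice-our-case}, carried out in the bidual around $\bar T(x)$, can be pulled back through $T^{-1}$ to a configuration in $S$ that contradicts the norm-distance of $x$ to points of $S$. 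Once this is achieved, the remainder of the proof is identical to the reflexive case. It is also possible that the proof will require a stronger genericity hypothesis on $S$ than just no repeated distances, in which case one would need to check that the stronger condition is still satisfied for ``typical'' dense sets in any natural measure.
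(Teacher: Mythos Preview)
This statement is a \emph{conjecture} in the paper, not a theorem: the paper gives no proof. Your proposal correctly recognises this and, in fact, your reduction of the conjecture to the extension problem is exactly what the paper does in the sentence immediately following the conjecture, where it observes that Lemmas~\ref{l:integer-distances}, \ref{l:bbglw-s5s6} and~\ref{l:lambda-dense} already handle the ``step-isometry on all of $X$ is an isometry'' step in any strictly convex space, so the conjecture would follow from the second conjecture (extension of step-isometries from dense sets). Your suggestion to pass to $X^{**}$ with the weak-$*$ topology is likewise the approach the paper itself floats in the paragraph preceding these conjectures.

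What remains is the genuine open gap, which you identify but do not close: showing that the weak-$*$ limit of $T(x\ton)$ lands in $X$ rather than in $X^{**}\setminus X$. Your sketch for this---pulling a bidual ball configuration back through $T^{-1}$ to reach a contradiction---is not yet a proof. The machinery of Lemmas~\ref{l:balls}--\ref{l:nice-our-case} produces balls centred at points of the dense set $S\subset X$; if the putative limit $x'$ lies outside $X$, there is no obvious way to centre an approximating ball at a point of $S$ near $x'$, nor to invoke Asplund differentiability of the \emph{bidual} norm at points near $x'$. The $c_0$ example shows that separable dual alone is insufficient, so strict convexity must enter in an essential way at precisely this step, and no mechanism for that is given. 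In short, your proposal is a correct diagnosis of where the difficulty lies and agrees with the paper's own framing, but it is a strategy outline rather than a proof.
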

As we have seen (Lemmas~\ref{l:integer-distances},~\ref{l:bbglw-s5s6} and~\ref{l:lambda-dense}) step-isometries on the whole of any strictly convex space are
necessarily isometries, the above conjecture would be implied by the
following conjecture given purely in terms of Banach spaces.
\begin{conjecture}
  Let $X$ be a strictly convex Banach space with separable dual and
  $T$ a step-isometry on a countable dense set $S$ in $X$. Then $T$
  can be extended to a step-isometry on the whole of~$X$.
\end{conjecture}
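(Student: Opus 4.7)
The plan is to mimic the argument that proved Theorem~\ref{t:step-isos-extend} in the reflexive case (Section~\ref{s:extend-reflexive}), replacing the weak topology on $X$ (whose unit ball is compact only when $X$ is reflexive) by the weak-$*$ topology on the bidual $X^{**}$. The essential enabling facts are that, since $X^*$ is separable, the weak-$*$ topology on bounded subsets of $X^{**}$ is metrizable and weak-$*$ sequentially compact by Banach--Alaoglu; and, since $X^*$ separable implies that $X$ is Asplund, Asplund's theorem still gives a dense set of points at which the norm of $X$ is Fr\'echet differentiable.

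Concretely, for $x\in X$ pick $x\ton\in \cds$ with $x\ton\to x$ in norm, observe that $T(x\ton)$ is bounded in $X$ (automatic from the step-isometry condition via $\bigl|\|T(u)-T(v)\|-\|u-v\|\bigr|\le 1$), and extract a weak-$*$ convergent subsequence $T(x\tog{n_k})\wsto\xi\in X^{**}$. The halfspace-approximation-by-ball argument of Lemma~\ref{l:balls}, which uses only Fr\'echet differentiability of the norm on $X$ and approximation by points of $\cds\subset X$, should port over almost verbatim to produce balls centred at points of $\cds$ separating $T(x\ton)$ from any given closed ball in $X^{**}$ not containing $\xi$. With this in hand, the analogues of Lemmas~\ref{l:seq-conv-banach},~\ref{l:injective} and~\ref{l:convergence-banach} (establishing uniqueness of $\xi$ and the step-isometry property of the tentative map $\bar T\colon X\to X^{**}$, $x\mapsto\xi$) should follow by the same scheme.

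The main obstacle, and the only genuinely new step, is to show that $\xi$ lies in the canonical image of $X$ inside $X^{**}$, since in the non-reflexive setting weak-$*$ limits of elements of $X$ may escape to $X^{**}\setminus X$. My proposed approach is to first establish the bidual step-isometry identity $\lfloor\|x-y\|\rfloor=\lfloor\|\bar T(x)-\bar T(y)\|_{X^{**}}\rfloor$ for $\bar T\colon X\to X^{**}$, and then exploit strict convexity of $X$ — which by Lemma~\ref{l:lambda-dense} makes the lattice $\Lambda$ generated by extreme points of the unit ball all of $X$ — together with density of $T(\cds)=\cds$ in $X$ to argue that the integer-distance profile $(\lfloor\|T(s)-\xi\|_{X^{**}}\rfloor)_{s\in \cds}$ can only be realised by a point already in $X$; a putative $\xi\in X^{**}\setminus X$ should then be ruled out by a Hahn--Banach separating functional in $X^{***}$ witnessing a violation of this identity for some suitable $y\in\cds$. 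An alternative route is a dense-subset version of the Omladi\v{c}--\u{S}emrl theorem (Theorem~\ref{t:omaldic-semrl}), which would produce a linear isometry $U$ of $X$ with $\sup_{s\in \cds}\|T(s)-U(s)\|<\infty$ and so confine $T(x\ton)$ to a bounded norm-neighbourhood of $U(x)\in X$, reducing the problem to closing the gap between norm- and weak-$*$-closed neighbourhoods.
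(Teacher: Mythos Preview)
The statement you are attempting to prove is a \emph{conjecture} in the paper, not a theorem; the authors explicitly leave it open and give no proof. Indeed, in the discussion preceding the conjecture they remark that moving to the weak-$*$ topology on $X^{**}$ is a natural idea, so your overall strategy is very much in the spirit the authors envisage --- but they were unable to push it through, and neither does your outline.

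There are two genuine gaps. First, you assert that Lemma~\ref{l:balls} ``should port over almost verbatim'' using only Fr\'echet differentiability of the norm on $X$. This is too optimistic. In the paper's proof of Lemma~\ref{l:balls} one translates so that the excluded ball is centred at $0$, and then chooses a point $\hat z$ of Fr\'echet differentiability \emph{norm-close} to $\hat x=x/\|x\|$; the resulting ball centre $z=(\alpha_0+c)\hat z$ is a point of $X$, which is then perturbed into $S$. In your setting the limit $\xi$ lives in $X^{**}$, and if $\xi\notin X$ there is no reason to find a Fr\'echet differentiability point of the $X$-norm norm-close to $\xi/\|\xi\|_{X^{**}}$: Asplund's theorem gives such points densely in $X$, not in $X^{**}$, and Goldstine only gives weak-$*$ (not norm) approximation of $\xi$ by points of $X$. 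So the very lemma you need in order to set up the bidual step-isometry identity already presupposes something close to $\xi\in X$ --- the argument is circular as written.

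Second, even granting the intermediate lemmas, neither of your two proposed routes to force $\xi\in X$ is convincing. The ``integer-distance profile'' idea is a heuristic, not an argument: you would need to show that for $\xi\in X^{**}\setminus X$ no family $(\lfloor\|s-\xi\|_{X^{**}}\rfloor)_{s\in\cds}$ can match a profile achievable inside $X$, and it is not at all clear how strict convexity of $X$ (as opposed to $X^{**}$) or a Hahn--Banach functional in $X^{***}$ would deliver this. The Omladi\v{c}--\u{S}emrl route requires a dense-subset version of Theorem~\ref{t:omaldic-semrl} that is not in the literature, and even if one had $\sup_{s\in\cds}\|T(s)-U(s)\|<\infty$, that only confines $\xi$ to a norm-bounded neighbourhood of $U(x)$ in $X^{**}$, which says nothing about membership in $X$; you yourself note this does not close the gap. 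In short, you have correctly located the obstruction the authors presumably ran into, but you have not removed it.
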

We have also seen that step-isometries are continuous on many Banach
Spaces. But are there any cases when they are not? We do not know, so
we leave this as a question.
\begin{question}
  Suppose that $X$ is a Banach space and that $T$ is a
  step-isometry on $X$. Must $T$ be continuous?
\end{question}
\bibliographystyle{abbrv} 
\bibliography{dense-geometric-mybib}

\end{document}